\documentclass[11pt]{amsart}
\input{preamble.tex}
\usepackage{etoolbox}
\usepackage{scalerel}
\usepackage{arydshln}
\tracingpatches
\makeatletter
\patchcmd{\@setref}{\bfseries ??}{\bfseries\color{red} OWE A COFFEE/BEER}{}{}
\makeatother

\newcommand{\kb}{\Bbbk}
\newcommand{\rk}{\mathrm{rk}}

\renewcommand{\MM}{\EuScript{F}}

\newcommand{\Cat}{\mathrm{Cat}}        % category of (small) categories
\newcommand{\Alg}{\mathrm{Alg}}        % category of algebra objects
\newcommand{\Eone}{\mathbb{E}_1}   
\newcommand{\Etwo}{\mathbb{E}_2}   % little 2-disks operad
\newcommand{\labdata}{\mathcal{C}}
\newcommand{\Dec}{\mathrm{Dec}}
\newcommand{\labelfunction}{\mathrm{l}}

\begin{document}

\author{Leon J. Goertz}
\address{Fachbereich Mathematik, Universit\"at Hamburg, 
Bundesstra{\ss}e 55, 
20146 Hamburg, Germany
\href{https://leonjgoertz.github.io/}{leonjgoertz.github.io}
}
\email{leon.goertz@uni-hamburg.de}

\author{Laura Marino}
\address{Fachbereich Mathematik, Universit\"at Hamburg, 
Bundesstra{\ss}e 55, 
20146 Hamburg, Germany
\href{https://sites.google.com/view/laura--marino/home-page}{sites.google.com/view/laura--marino}}
\email{laura.marino@uni-hamburg.de}

\author{Paul Wedrich}
\address{Fachbereich Mathematik, Universit\"at Hamburg, 
Bundesstra{\ss}e 55, 
20146 Hamburg, Germany
\href{https://paul.wedrich.at}{paul.wedrich.at}}
\email{paul.wedrich@uni-hamburg.de}

\title{Monoidal 2-categories from foam evaluation}

\begin{abstract}
In this paper we describe a general framework for constructing examples of
locally linear semistrict monoidal 2-categories covering many examples appearing
in link homology theory. The main input datum is a closed foam evaluation formula. As
examples, we rigorously construct semistrict monoidal 2-categories based on
$\glN$-foams, which underlie the general linear link homology theories, and further examples based on Bar-Natan's decorated cobordisms, related to Khovanov homology. These monoidal 2-categories are typically non-semisimple, have duals for all objects, adjoints for all 1-morphisms, and carry a canonical spatial duality structure expressing oriented 3-dimensional pivotality and sphericality.
\end{abstract}

\maketitle

\setcounter{tocdepth}{2}

\tableofcontents

\section{Introduction}
Monoidal and braided monoidal categories have played a central role in quantum topology
for several decades, providing the algebraic framework underlying quantum invariants of knots and links and the construction of associated topological field theories. Prominent examples include the Temperley--Lieb category, which underlies the Jones polynomial \cite{MR0766964,MR1185809} via the Kauffman bracket, and categories of webs associated with tilting modules of quantum
groups of type~$A$ \cite{MOY, CKM, TVW}.
These categories are \emph{diagrammatic} in the sense that they admit presentations as linear monoidal categories generated by certain collections of generating objects and morphisms, modulo a monoidal ideal spanned by \emph{skein relations}. Furthermore, the presentations can be defined over the integers, which gives these categories meaningful reductions in positive characteristic and makes them amenable to categorification. As a result, they have become fundamental objects not only in representation theory, but also in low-dimensional topology and related areas.

\subsection{Foams and higher categories}
Diagrammatic methods in quantum topology extend beyond the above-mentioned monoidal categories to their categorifications, naturally giving rise to higher-categorical structures. Specifically, categorifications of the Temperley--Lieb category and of type A web categories feature an additional layer of 2-morphisms, which can be described in terms of decorated surfaces or \emph{foams}---singular
surfaces interpolating between webs---often considered embedded in ambient 3-dimensional (or even 4-dimensional) space.
Such higher categories of foams form the algebraic backbone of \emph{Khovanov homology} \cite{Kho, BN2} and the \emph{general linear link homologies} pioneered by Khovanov--Rozansky \cite{KR, MSV, QR, RoW}, for which they appear throughout the related literature in many
variants and refinements; see \cite{khovanov2025lecturessl3foamslink} for recent lecture notes on the topic.
From a categorification perspective, such constructions are most naturally organized\footnote{In \S\ref{sec:hol} we comment on alternatives.} as instances of
monoidal $2$-categories, in which webs appear as $1$-morphisms and foams
as $2$-morphisms, and which decategorify to diagrammatic monoidal 1-categories.
\smallskip

\subsection{Combinatorial vs.\ topological foams}
In much of the existing literature, however, the higher-categorical nature of foam
constructions is only partially made explicit.
In one part of the literature, foam categories are described in a purely \emph{combinatorial} manner,
but typically only at the $1$-categorical level, with foams appearing as
morphisms between webs with fixed boundary conditions, leaving monoidal 2-category considerations aside. Alternatively, foams are sometimes treated as genuinely \emph{topological} objects by viewing
them as embedded singular surfaces in a three-dimensional ball or cube, with composition defined
by gluing and considered up ambient isotopy and certain local relations.
While both perspectives have proved useful, each captures only part of the
underlying structure: the former misses the higher-categorical operations which are essential for TQFT considerations, while the
latter relies essentially on geometric embeddings which obscure the combinatorial nature of foams.

\smallskip
The approach taken in this paper brings these two viewpoints together.
We give a fully algebro-combinatorial construction of the foam formalism as genuine
higher-categorical object, avoiding embedded representatives while retaining the
composition and gluing structures that reflect their three-dimensional origin.
In particular, this allows us to realize the foam categories relevant to link homology
as monoidal $2$-categories in a way that makes their higher-categorical features explicit and
amenable to rigorous and formal verification.

\smallskip
The main result of this paper is the explicit and rigorous construction of the \emph{Bar-Natan monoidal 2-categories} and the \emph{monoidal 2-categories of $\glN$-foams} as two prototypical classes of examples of locally graded-linear semistrict monoidal 2-categories. These are 2-categories equipped with an additional monoidal structure, whose hom-categories are enriched in graded $\kb$-modules for a suitable graded commutative ring $\kb$ in a way compatible with horizontal composition and monoidal structure; see e.g.\ \cite{liu2024braided}. Adopting higher-categorical notation, we see these examples as objects of $\Alg_{\Eone}(\Cat[\Cat[\kb\grmod]])$, i.e.\ $\Eone$-algebras in categories enriched in graded $\kb$-linear categories, the latter modeled as categories enriched in graded $\kb$-modules.

\phantomsection \newcommand{\MainThm}{\hyperref[thm:A]{Main Theorem}} % for referring back here
\begin{mainthm}\label{thm:A}~
\begin{enumerate}
\item
For every commutative ring $\kb$ and every commutative Frobenius algebra $A$ free over
$\kb$, the formalism of Bar--Natan’s decorated cobordisms \cite{BN2} extends to provide a locally $\kb$-linear semistrict monoidal 2-category
\[
\bn_A \in \Alg_{\Eone}(\Cat[\Cat[\kb\Mod]]),
\]
generated by a 2-dualizable self-dual object. If, furthermore, $\kb$ is graded and $A$ a graded Frobenius algebra over $\kb$, then $\bn_A$ can be promoted to carry a graded $\kb$-linear structure, yielding an object of $\Alg_{\Eone}(\Cat[\Cat[\kb\grmod]])$.

\item
For every commutative ring $\kb$ and every positive integer $N$, the Robert--Wagner closed foam evaluation formula \cite{RoW} for $\glN$-foams over $\kb$ extends to provide a locally graded $\kb$-linear semistrict monoidal 2-category
\[
\Foamssmonbicat{N} \in \Alg_{\Eone}(\Cat[\Cat[\kb\grmod]]),
\]
generated by 2-dualizable objects indexed by the set $\{1,\dots, N\}$.
\end{enumerate}
In both types of monoidal 2-categories, all objects admit duals and all 1-morphisms admit adjoints. Furthermore, these can be packaged into canonical \emph{spatial}\footnote{\cite[Definition 4.8]{barrett2018gray}.} \emph{duality structures}\footnote{\cite[Definition 3.10]{barrett2018gray}.} in the sense of Barrett--Meusburger--Schaumann \cite{barrett2018gray}. 
\end{mainthm}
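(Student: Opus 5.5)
The strategy is to prove a general construction theorem first and then feed it the two evaluation formulas.

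\textbf{General construction.} Fix a set of labels (the object generators) and a class of combinatorial closed foams with a $\kb$-valued evaluation $\langle\,\cdot\,\rangle$ that is multiplicative under disjoint union and invariant under combinatorial isotopy. From this data we build a semistrict monoidal 2-category (a Gray monoid) as follows.

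\begin{enumerate}
\item \emph{Objects} are finite words in the labels, and the monoidal product is concatenation, so it is strictly associative and unital.
\item \emph{1-morphisms} are combinatorial webs, i.e.\ planar graphs in a strip up to planar isotopy, composed strictly by stacking. We choose the relation on webs so that composition is strictly associative.
\item \emph{2-morphisms} between two webs form a free $\kb$-module on abstract combinatorial foams with the given boundary. We quotient it by the kernel of the universal construction pairing
\[
(F,G)\mapsto \langle \overline{G}\cup F\rangle ,
\]
where $\overline{G}$ is the reflection of $G$.
\end{enumerate}

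Vertical and horizontal composition of foams are gluing and juxtaposition. The first thing to check is that these operations descend to the quotient. This holds because the kernel of the pairing is a two-sided ideal under gluing: if $F$ pairs to zero with every foam, then so does any gluing of $F$ with a fixed foam, since such a gluing can be absorbed into the test foam. The monoidal product of 2-morphisms is also juxtaposition.

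\textbf{Gray monoid axioms.} The interchanger is the foam given by the trace of the planar isotopy that slides one web past another at different heights. We then verify the Gray monoid axioms (the list of Baez--Neuchl/Schommer-Pries type): naturality of the interchanger, compatibility with composition, and so on. Each axiom is an equality of 2-morphisms between combinatorial foams. We reduce every such equality to the claim that the two foams are related by a combinatorial isotopy. Isotopic foams have equal evaluation after capping with any test foam, so they are equal in the quotient.

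\textbf{Gradings and instances.} When the evaluation is homogeneous, with foam degree additive under gluing, the hom-modules inherit a grading, which gives the graded version. For the Bar-Natan case, the foams are surfaces carrying dots labelled by elements of $A$. The evaluation of a closed decorated surface is $\epsilon(\text{product of decorations}\cdot \text{handle element}^{g})$ on each component. Invariance and multiplicativity are immediate here, and the Frobenius structure gives the grading. For $\glN$, we invoke the Robert--Wagner theorem that their closed foam evaluation lies in $\kb$ (and not only in its fraction field), is multiplicative, and is isotopy invariant. These are exactly the hypotheses of the general construction.

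\textbf{Duals and adjoints.} The dual of an object is its reversed word with dual labels (label $k \mapsto N-k$, or the label itself in the self-dual case). Evaluation and coevaluation are cup and cap webs, and the zigzag 2-cells are the trace foams of the planar isotopies straightening the zigzags. The swallowtail equations then hold by isotopy invariance. Adjoints of webs are their reflections, with units and counits given by saddle- and cup-type foams, and the triangle identities again reduce to isotopy. Spatiality in the sense of Barrett--Meusburger--Schaumann asks for a compatibility of the pivotal structure that amounts to a sphericality-type equality of two closed foams. These two foams are isotopic on $S^2\times I$-type configurations, so the evaluations agree.

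\textbf{Main obstacle.} The hardest step is the general theorem's reliance on combinatorial isotopy: we need a purely combinatorial notion of foam equivalence, not an embedded one. It must be fine enough that the Robert--Wagner evaluation is invariant under it, yet coarse enough to identify all the 2-cells the Gray and duality axioms require. My first attempt would define foams as abstract 2-complexes with cyclic orderings at seams (or orientation data), together with a height/Morse decomposition into elementary pieces. The relevant isotopies would be generated by movie moves of those pieces, and I would check invariance of each evaluation move by move.
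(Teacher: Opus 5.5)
\textbf{Setup of 1-morphisms.} Your universal-construction step and the ``absorb the gluing into the test foam'' argument are sound. The problem is taking 1-morphisms to be webs \emph{up to planar isotopy}, which breaks the construction in two ways.
\begin{enumerate}
\item It is inconsistent with your interchanger. The composites $(S\boxtimes t')\circ(s\boxtimes T)$ and $(t\boxtimes T)\circ(S\boxtimes s')$ are planar isotopic, so they would be the same 1-morphism.
\item More seriously, the 2-morphism module of an isotopy class is not well defined. Self-isotopies of a web act by nontrivial automorphisms of its combinatorial data. For example, two parallel $x$-labelled circles in a web $S\colon\emptyset\to\emptyset$ can be exchanged by a planar isotopy. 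This acts on $\webeval$ of the four-circle web $\abs{\inthom(S,S)}$ by a nontrivial permutation of tensor factors, so ``a dot on the first circle'' is not a well-defined 2-morphism of the class.
\end{enumerate}
Choosing representatives brings back exactly the coherence problem you meant to quotient away. The paper uses no quotient at all. Its 1-morphisms are finite words of basic webs $o_l\boxtimes W'\boxtimes o_r$, composed by concatenation. The tensorator is then a genuine invertible 2-morphism between two distinct words that have the same underlying abstract closed web.

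\textbf{The main obstacle and further errors.} The step you flag as the main obstacle is left open, so as written no axiom is actually verified. The machinery you propose for it (Morse decompositions, movie moves, move-by-move invariance of the Robert--Wagner evaluation) is also unnecessary. An evaluation of abstract foams is by definition a function of combinatorial data, and the structural 2-cells can be chosen so that every axiom holds on the nose.

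The missing idea is the holographic definition $\mathrm{Hom}(S,T):=\webeval(\abs{\inthom(S,T)})$. This is the universal-construction state space of the closed web obtained by gluing $S^*$ and $T$ along $s$ and $t$. All structure is then implemented by explicit abstract foams without internal vertices, pushed through $\webeval$:
\begin{itemize}
\item $\mathrm{vert}_{U,T,S}$ and $\mathrm{hor}^{T',S'}_{T,S}$ for the compositions;
\item cup foams for identities, tensorators, triangulators and the units $\varepsilon_S$;
\item identity foams for $-^*$.
\end{itemize}
Every Gray-monoid, duality and spatiality axiom then reduces to a literal equality of combinatorial foams, and no isotopy invariance is ever used. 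This also supplies the gluing along $T$ that you take for granted. On foams bounding $\abs{\inthom(S,T)}$ that gluing is not directly defined, because the edges of $S$ and $T$ meeting at $s$ and $t$ have been merged.

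Three further points are wrong.
\begin{itemize}
\item \emph{Duals.} In the paper, duals reverse the word and flip $(x,\uparrow)\leftrightarrow(x,\downarrow)$, keeping the label. Your $k\mapsto N-k$ would need cup webs from $\emptyset$ to a $k$-point next to an equally oriented $(N-k)$-point. The flow condition forbids these here: there are no tags, and $N$-labelled edges cannot end.
\item \emph{Gradings.} The composition foams have nonzero degree, so the graded version needs explicit shifts of the 2-morphism modules. These are $\{\tfrac12(c(s)+c(t))\}$ in the nilvalent case and $\{-\tfrac12(\deg\mathrm{cup}_S+\deg\mathrm{cup}_T)\}$ for $\mathfrak{gl}_N$. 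Your assumption ``degree additive under gluing'' tacitly presupposes exactly these corrections.
\item \emph{Spatiality.} In the sense of Barrett--Meusburger--Schaumann, spatiality is the equality $\Delta={*\Delta*}$ of the natural isomorphisms from left to right mates. It is not a sphericality identity between closed foams.
\end{itemize}
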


\begin{remnono}[On link homology]
Our \MainThm\ rigorously constructs the monoidal 2-categories which respectively feature in the extension of Khovanov homology (1) and $\glN$ link homology (2) to the case of tangles. In this context,  we consider tangles as properly embedded in the cube $[0,1]^3$ with endpoints on $[0,1] \times \{0,1\} \times \{1/2\}$. Stacking tangles in the direction of the second coordinate corresponds to the horizontal composition of 1-morphisms and placing tangles side-by-side in the direction of the first coordinate corresponds to the monoidal structure. The 2-morphisms play the dual roles of supplying the differentials in \emph{cube of resolution} chain complexes of tangles as well as the components of chain maps associated to tangle cobordisms. Cubes of resolutions associated to tangle diagrams can be accommodated after changing the base of enrichment from linear categories to pretriangulated dg categories of chain complexes, which can be interpreted as a completion operation\footnote{After idempotent and additive completion, the formation of the $\infty$-categorical bounded homotopy category of chain complexes was identified in \cite[\S 3.4]{liu2024braided} as left adjoint to the forgetful functor from stable idempotent-complete categories to additive idempotent-complete categories.}. For example, a locally linear semistrict monoidal 2-category of Soergel bimodules was constructed in \cite{stroppel2024braidingtypesoergelbimodules}. Upon completion to a locally stable, locally linear $(\infty,2)$-category of chain complexes of Soergel bimodules, the Rouquier braiding \cite{Rou2} extends to an $\mathbb{E}_2$-algebra structure \cite{liu2024braided}. We expect that a similar completion for $\glN$-foams can be promoted to a locally stable, locally linear $\mathbb{E}_2$-monoidal $(\infty,2)$-category that controls the general linear link homology theories on the chain level. In this paper, link homology and $\Etwo$-structures only play a motivational role. Instead, we focus on describing the combinatorial nature of the underlying $\Eone$-structures rigorously.
\end{remnono}

\begin{remnono}[On skein theory]
The existence of duals for objects, adjoints for 1-morphisms, the canonical duality structure and spatiality are crucial features of the monoidal 2-categories constructed in the \MainThm. Alternatively, in the language of \cite[Def.\ 2.2.3 and 2.2.4]{douglas2018fusion}, these amount to a pivotal structure on a monoidal planar pivotal 2-category in which every object has a left and a right dual. 
These properties and structures are higher-categorical analogues of the rigidity property and pivotality in monoidal 1-categories, and they ensure that the resulting monoidal 2-categories
admit a well-behaved graphical calculus of 2-dimensional skeins embedded in \emph{oriented} 3-dimensional space---we may call this $\mathrm{SO}(3)$-pivotality.

Additionally, our examples are \emph{spherical} in the sense of \cite[Def.\ 2.3.2]{douglas2018fusion}, allowing for unambiguous evaluation of skeins in the oriented 3-sphere\footnote{This is conjecturally related to $\mathrm{SO}(4)$-pivotality.}.
This allows our monoidal 2-categories to be used as input for fully local, partially defined 4-dimensional topological quantum field theories \cite{HRWBorderedInvariantsKh,WedrichSurveyLinkHomTQFT,leonsurfaceskein}. 
\end{remnono}

\begin{remnono}[Graded $\kb$-linear vs. $\kb$-linear with $\Z$-action] In the context of our \MainThm\,  the local enrichment in graded $\kb$-modules can also be traded for $\kb$-linear hom-categories with $\Z$-actions by grading-shift autoequivalences, yielding objects of $\Alg_{\Eone}(\Cat[\Cat[\kb\Mod]^{B\Z}])$. In this setting, both families of examples from \MainThm\, still have duals for all objects and adjoints for all 1-morphisms. However, since left and right adjoints of 1-morphisms typically differ by a non-trivial grading shift, these examples do not inherit the duality structures from their locally graded $\kb$-linear counterparts.
\end{remnono}

\subsection{Approach}

While our \MainThm\ focuses on the two most prominent families of examples arising
in link homology, the constructions presented in this paper are considerably more
general.
At a conceptual level, the only inputs required are:
\begin{enumerate}
\item A choice of \emph{combinatorial type} of foam. For example, for \MainThm\ (2), we consider foams with oriented facets, which meet cyclically ordered at trivalent seams and which intersect in tetravalent vertices, such that facets are labeled by natural numbers, which satisfy a flow condition at seams, and decorated by certain $\kb$-algebra elements. We describe such trivalent foams, a subclass of nilvalent foams, and appropriate labeling data in \S\ref{sec:closedfoams}. Other combinatorial types, such as those discussed in \cite{khovanov2025lecturessl3foamslink} for $\mathrm{SL}(3)$ and  $\mathrm{SO}(3)$, see also \cite{MR3880205}, can be treated mutatis mutandis.
\item A choice of compatible closed foam evaluation formula, satisfying certain multiplicativity requirements. For example, the combinatorial Robert--Wagner foam evaluation formula \cite{RoW} for \MainThm\ (2). An alternative would be the Kapustin-Li formula \cite{MR2039036} adapted by Khovanov--Rozansky to the setting of foams~\cite{MR2322554}, see also Mackaay--Sto\v{s}i\'{c}--Vaz~\cite{MSV}.
\item A rule for allowed embeddings of the chosen type of foams in oriented 3-manifolds. In our running example we require a compatibility between cyclic ordering of facets, induced seam orientation and ambient 3-manifold orientation via the left-hand rule.
\end{enumerate} 
From such data, we obtain a locally linear semistrict monoidal $2$-category in two stages, which are carried out in Sections~\ref{sec:TQFT} and \ref{sec:2cat} respectively, and which we now describe informally.

\subsection{From closed foam evaluation to a TQFT for webs and foams}
The first stage is to extract a TQFT for webs and foams from the closed foam evaluation formula by means of the \emph{universal construction}, see e.g.~\cite{BHMV}. 
\smallskip

In \S\ref{sec:boundaryweb} we start by extending the chosen combinatorial type of closed foams to \emph{foams with boundary}, the latter encoded as appropriate type of \emph{closed web}. After partitioning the boundary web into a disjoint union of \emph{source} and \emph{target webs}, we can consider foams as morphisms between closed webs in an appropriately defined symmetric monoidal category; see Proposition~\ref{prop:foamcat}. The composition of foams is given by a combinatorial gluing construction and the symmetric monoidal structure is disjoint union. 
\smallskip 

In \S\ref{sec:labeledfoams} we incorporate labelings and decorations on foams provided by a labeling datum\footnote{As defined in Definition~\ref{def:labdata}.} $\labdata$ to obtain a $\kb$-linear symmetric monoidal category $\PreFoamCat[\labdata]$ of \emph{$\labdata$-prefoams} between closed  $\labdata$-webs in Proposition~\ref{prop:labfoamcat}. The prefix \emph{pre} indicates that this freely generated category of prefoams is still missing relations.
\smallskip

The relations are obtained from the closed foam evaluation formula in \S\ref{sec:univ} by means of the universal construction, as we now describe. For closed $\labdata$-webs $W$ and $W'$ we denote the $\kb$-module of $\labdata$-prefoams between them by $\PreFoamCat[\labdata](W,W')$. Prefoam composition and the closed foam evaluation formula give rise to a $\kb$-bilinear pairing
\[\PreFoamCat[\labdata](W,W') \times \PreFoamCat[\labdata](W',W) \to \kb\]
whose radical $I(W,W')\subset \PreFoamCat[\labdata](W,W')$ we interpret as subspace of foam relations. In fact, these subspaces define an ideal, and the quotient 
\[
\FoamCat[\labdata]:= \PreFoamCat[\labdata]/I
\]
is the desired symmetric monoidal $\kb$-linear foam category.  

\smallskip
We then study additional properties of the closed foam evaluation formula concerning multiplicativity. In favorable cases, the representable functor for the empty web $\emptyset$

\[
\webeval\colon \Hom(\emptyset, -) \colon \FoamCat[\labdata] \to \kb\Mod 
\]
is strongly symmetric monoidal. After precomposing with the canonical quotient functor from $\PreFoamCat[\labdata]$, we obtain a symmetric monoidal functor 
\begin{equation}
\label{eq:foamTQFT}
\unicon \colon \PreFoamCat[\labdata] \to \kb\Mod,
\end{equation} which, alongside $\webeval$, is the second incarnation of the desired TQFT for webs and foams with labeling data $\labdata$. Cases in which the foam evaluation is compatible with reversing all orientations on foams are studied in \S\ref{sec:pairing}.

\smallskip 

In \S\ref{sec:Frob} we observe that the TQFT for webs and foams contains for each fixed label in $\labdata$ an ordinary 2-dimensional TQFT, defined for oriented cobordisms between closed oriented 1-manifolds. Correspondingly, there is an associated commutative Frobenius algebra over $\kb$. In the graded case, we extract the homogeneous degrees of the unit and counit map, which play an important role in the following as grading shifts employed when defining 2-morphism spaces in the associated monoidal 2-category.

\subsection{Via holography to monoidal 2-categories}\label{sec:hol}
A guiding principle behind our approach is a holographic viewpoint. Spaces of 2-morphisms in a monoidal 2-category are indexed by a pair of parallel 1-morphisms $S,T\colon s\to t$, serving as source $S$ and target $T$. In the graphical calculus of monoidal 2-categories, it is convenient to consider $S$ and $T$ to decorate the bottom and top face of a 3-dimensional cube, with their target and source objects $t$ and $s$ connected along the left and right faces, respectively. In the case our our desired monoidal 2-category $\Foamssmonbicat{\labdata}$, these decorations amount to a closed $\labdata$-web, as depicted in Figure~\ref{fig:cubecoordinatedconvention}.c). Thus embedded in the boundary of the cube, the web may serve as boundary condition for properly embedded $\labdata$-foams, as pictured in Figure~\ref{fig:cubecoordinatedconvention}.c). Alternatively, it can also be considered as \emph{abstract} closed $\labdata$-web, pictured in Figure~\ref{fig:cubecoordinatedconvention}.d), which we denote by $\abs{\inthom(S,T)}$.

\begin{figure}[ht]
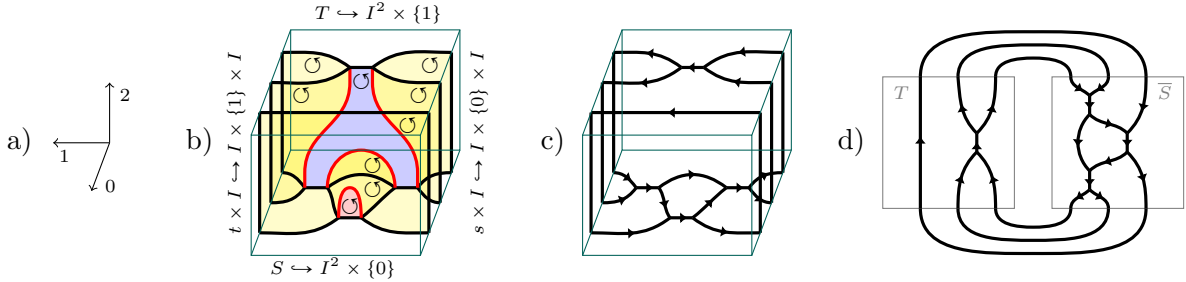

    $\mathrm{a)}$ \foamcoord$\quad$
    $\mathrm{b)}$\foamGraphsimp$\quad$
    $\mathrm{c)}$ \foamGraphsimpnofill$\quad$
    $\mathrm{d)}$ \foamGraphsimpflat
    \caption{a) Coordinate system for the graphical calculus, b) An embedded trivalent foam as 2-morphism between parallel webs $S,T\colon s \to t$. c) The associated boundary web. d) A flattened depiction of the boundary web. Throughout we suppress labels and cyclic orderings.}\label{fig:cubecoordinatedconvention}
\end{figure}

In our construction of $\Foamssmonbicat{\labdata}$ we \emph{define} the space of 2-morphisms from $S$ to $T$ by setting
\[
\Hom_{\Foambicathom{\labdata}{t}{s}}(S,T):= \unicon(\abs{\inthom(S,T)})
\]
where $\unicon$ is the TQFT for abstract $\labdata$-labeled webs and foams from \eqref{eq:foamTQFT}. This construction is described in detail in \S\ref{sec:vertcomp} once the sets of objects and 1-morphisms of $\Foamssmonbicat{\labdata}$ have been defined in \S\ref{subsec:objsAndOneMorphs}.

\begin{remnono}[Skeins and skein relations]
Given any $\labdata$-foam embedded in the 3-dimensional cube with boundary conditions $S,T\colon s\to t$ as described above, we can obtain an element of the $\kb$-module $\Hom_{\Foambicathom{\labdata}{t}{s}}(S,T)$ by \emph{evaluating} the foam as follows. First we forget the embedding and regard the foam as an abstract $\labdata$-prefoam, namely as a morphism from the empty web $\emptyset$ to the boundary web $\abs{\inthom(S,T)}$ in $\PreFoamCat[\labdata]$. Applying the TQFT $\unicon$ yields a $\kb$-linear map $\kb\to \Hom_{\Foambicathom{\labdata}{t}{s}}(S,T)$, whose value at $1\in \kb$ can be regarded as the 2-morphism represented by the foam.

In the examples of \MainThm, the values of embedded foams span $\Hom_{\Foambicathom{\labdata}{t}{s}}(S,T)$ for any pair $S,T\colon s\to t$. This gives rise to a skein-theoretic description of the 2-morphism modules. Specifically, $\Hom_{\Foambicathom{\labdata}{t}{s}}(S,T)$ can be presented as quotient of the free $\kb$-module with basis given by all embedded $\labdata$-foams with boundary conditions $S,T\colon s\to t$---the \emph{skeins}---modulo the kernel of the $\kb$-linear map to $\Hom_{\Foambicathom{\labdata}{t}{s}}(S,T)$ defined on the basis by the evaluation above---the \emph{skein relations}.

Note that such a skein-theoretic description is a consequence of our construction, not part of the construction---$\labdata$-foams embedded in the 3-dimensional cube only play a motivational part in this paper.
\end{remnono}

\smallskip

The vertical composition, horizontal composition, and monoidal structure of $\Foamssmonbicat{\labdata}$ are constructed in \S\ref{sec:vertcomp}, \S\ref{sec:horcomp}, and \S\ref{sec:tencomp} respectively. The non-trivial part of these three constructions is on the level of $\kb$-modules of 2-morphisms and in all three cases follows the same underlying idea, which is inspired by Khovanov's construction of arc rings \cite{MR1928174}, and the graphical calculus from Figure~\ref{fig:cubecoordinatedconvention}. It is useful to consider the coordinate system displayed in Figure~\ref{fig:cubecoordinatedconvention}.a). The 0-direction is the direction of the tensor product $\boxtimes$, the 1-direction is the direction of horizontal composition $\hcomp$ in which 1-morphisms compose, the 2-direction is the direction of vertical composition $\vcomp$. On the level of 2-morphisms, these three operations correspond, respectively, to stacking cubes (containing foams) behind each other, left-and-right of each other, and on top of each other. 

\smallskip
For example, the vertical composition of 2-morphisms $S\To T$ and $T\To U$ corresponds to stacking cubes on top of each other along the common facet containing the web $T$, and rescaling the vertical coordinate to obtain a new cube with bottom web $S$ and top web $U$. On the level of abstract boundary webs, this corresponds to a transformation
\[
    \abs{\inthom(T,U)} \sqcup \abs{\inthom(S,T)} \to \abs{\inthom(S,U)},
    \]
which we can realize by a carefully constructed abstract $\labdata$-foam, which contracts the two copies of $T$. Under the (symmetric) monoidal TQFT $\unicon$ for $\labdata$-labeled webs and foams, this corresponds precisely to the desired $\kb$-linear vertical composition map:
\[
\Hom_{\Foambicathom{\labdata}{t}{s}}(T,U) \otimes_{\kb} \Hom_{\Foambicathom{\labdata}{t}{s}}(S,T) \to \Hom_{\Foambicathom{\labdata}{t}{s}}(S,U)
\]

After explicitly constructing abstract foams that implement the horizontal composition, the monoidal structure, the identity 2-morphisms, and certain other structural 2-morphisms for the semistrict monoidal structure, all verifications of the axioms of a semistrict monoidal 2-category \cite[Lemma 4]{BaezNeuchlHDAI} can be reduced to exhibiting explicit isomorphisms between abstract $\labdata$-prefoams. For example, the vertical composition turns out to be associative because both ways of composing a triple of composable 2-morphisms can be implemented by the map induced under $\unicon$ by the same underlying $\labdata$-prefoam.
\smallskip

In \S\ref{subsec:locallygraded}, we discuss the necessary modifications in the graded case. These amount to certain grading shifts in the definition of the graded $\kb$-modules of 2-morphism, which ensure that the three composition operations $\boxtimes$, $\hcomp$, and $\vcomp$ end up grading-preserving.

\begin{remnono}[Alternative 3-categorical formalizations] By delooping, any semistrict monoidal 2-category can be considered as a weak 3-category with a single object, more precisely, a \emph{Gray monoid} \cite{MR3076451}\footnote{Conversely, every weak 3-category is equivalent to a Gray category by \cite{GPStricat}.}. 

Figure~\ref{fig:fourmodels} shows alternative models of 3-categorical structures, for which a holographic approach using a TQFT of webs and foams can be used to construct explicit examples. They have in common that the top-dimensional morphisms spaces are parametrized by a topological 3-ball with a web embedded transversely to a given stratification in the boundary 2-sphere. For a disklike 3-category in the sense of Morrison--Walker~\cite{MWBlob}, the stratification can be interpreted as trivial. For a canopolis in the sense of Bar-Natan~\cite{BN2}, the stratification is induced by realizing the 3-ball as a  \emph{can} $B^2\times I$ and one restricts to webs that are vertical on $S^1 \times I$. For a monoidal 2-category or a triple category (the 3-fold analog of a double category), the stratification is inherited from the product stratification of the cube $I^3$ with only difference that in a monoidal 2-category webs avoid the front and back faces and are vertical in the side faces.

\begin{figure}[ht]
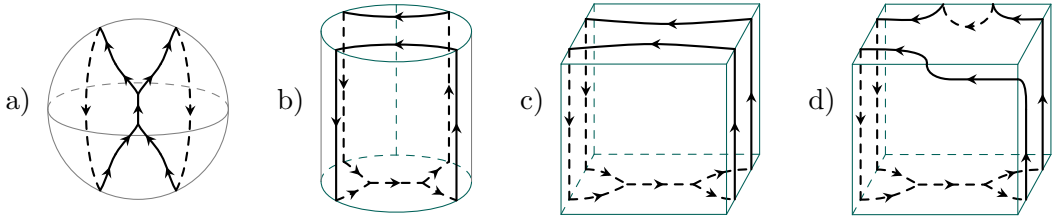
 
\[
\mathrm{a)} \;\;\disklikeThreeCat\;\;\;\;\;\;
\mathrm{b)} \;\;\canopolis\;\;\;\;\;\;
\mathrm{c)} \;\;\monoidalTwoCat\;\;\;\;\;\;
\mathrm{d)}  \;\;\tripleCat\]
  \caption{Example shapes of top-dimensional morphisms in a) a disklike 3-category, b) a canopolis \cite{BN2}, c) a monoidal 2-category, d) a triple category.}
  \label{fig:fourmodels}
\end{figure}
\noindent Other stratifications also have their use cases, for example those coming from cylinders over polygons, see \cite{HRWBorderedInvariantsKh}. 

\end{remnono}

Finally, in \S\ref{subsec:duals} we show that all objects of $\Foamssmonbicat{\labdata}$ admit duals and all 1-morphisms admit left and right adjoints. Furthermore, we show that taking duals and adjoints can be packaged into two duality operations $\#$ and $*$ on $\Foamssmonbicat{\labdata}$, which roughly\footnote{See Remark~\ref{rem:strict}.} correspond in the graphical calculus to rotating 2-morphisms by an angle of $\pi$ in the $(0,1)$- and $(1,2)$-planes respectively. These operations satisfy compatibilities encoded by the notion of a \emph{spatial duality structure} in the sense of \cite{barrett2018gray}. This enables a graphical calculus for $\Foamssmonbicat{\labdata}$ involving oriented (rather than framed) ambient 3-cubes and skein theory in ambient oriented 3-manifolds.

\begin{remnono}
	Relative to the graphical language of \cite{barrett2018gray}, in Figure~\ref{fig:cubecoordinatedconvention} we relabeled the coordinate
	axes $w\mapsto 0$, $x\mapsto 1$, $y\mapsto 2$ and display our cubes rotated
	by $\pi$ in our $(1,2)$-plane. That is: the first factor of a monoidal product appears in front of the second one. Horizontal composition puts the later 1-morphism to the left of the previous one and vertical composition puts the later 2-morphism on top of the previous one. We also adopt the convention \cite[Definition 2.1.3]{DSPS} that tensor product corresponds to opposite composition upon delooping. With respect to the three composition directions, we have thus implemented the right-hand rule. Still, we use the graphical language adapted to the tensor product, following \cite{barrett2018gray}.
	\end{remnono}

\subsection*{Acknowledgments}
We would like to thank David Reutter and Kevin Walker for helpful discussions.

\subsection*{Funding}

The authors acknowledge support from the Deutsche
Forschungsgemeinschaft (DFG, German Research Foundation) under Germany's
Excellence Strategy - EXC 2121 ``Quantum Universe'' - 390833306 and the
Collaborative Research Center - SFB 1624 ``Higher structures, moduli spaces and
integrability'' - 506632645.

\section{Closed foam evaluation}
\label{sec:closedFoamEval}

	\subsection{Closed foams}
    \label{sec:closedfoams}
    \newcommand{\powerset}{\mathrm{P}}
	
\begin{definition}
\label{def:combclosedfoam}
    A \emph{combinatorial closed foam} consists of the following data:
    \begin{itemize}
        \item A finite set $V$ of \emph{vertices}. 
        \item A finite set $E$ of \emph{seams}, partitioned as $E=E_{\text{loop}}\sqcup E_{\text{arc}}$ into \emph{loops} and \emph{oriented arcs}. \item Associated to each arc $e\in E_{\text{arc}}$, a set $V(e)\subset V$ of exactly two \emph{adjacent} vertices. Each arc $e$ is given an orientation by designating one of the vertices of $V(e)$ as \emph{source} $s(e)$ and the other one as \emph{target} $t(e)$. For every vertex $v\in V$, we write $E(v)\subset E$ for the subset of arcs adjacent to $v$.

        For each $e\in E_{\text{arc}}$, we define $e^-$ to be $e$ with opposite orientation, i.e.\ $s(e^-)=t(e)$ and $t(e^-)=s(e)$, and we set $e^+:=e$. Then, we let $W(E)$ denote the set of cyclic words of positive length formed as follows: the words of length $1$ consist of the loops in $E_{\text{loop}}$. For length $\ell>1$, $W(e)$ contains the tuples $(e_1^{\varepsilon_1},\dots e_\ell^{\varepsilon_\ell})$, considered up to cyclic permutation, where $\varepsilon_i\in\{-,+\}$, $e_i\in E_{\text{arc}}$ for all $i$, and the $e_i^{\varepsilon_i}$ are composable in the sense that $t(e_i^{\varepsilon_i})=s(e_{i+1}^{\varepsilon_{i+1}})=:v_{i+1}$ for $1\leq i<\ell$ and $t(e_\ell^{\varepsilon_\ell})=s(e_1^{\varepsilon_1})=:v_1$, and the vertices $v_1,\ldots,v_\ell$ are all distinct.
        We call elements of $W(E)$ of length $\ell >1$ \emph{oriented}, and the remaining ones \emph{unoriented}.
        
        \item A finite set $F$ of 
        \emph{facets}, partitioned as $F=\bigsqcup F_{g,n}$ according to genus $g\in \N_0$ and number of boundary components $n\in \N_0$. 
        \item Associated with every $f\in F_{g,n}$ is a set of \emph{boundary words} $\partial(f)\in \powerset(W(E))$ of cardinality $|\partial(f)|=n$. The elements of $\partial(f)$ are cyclic words $\partial_i(f)\in W(E)$, such that every seam $e\in E$ and every vertex $v\in V$ 
        is traversed in $\partial_i(f)$ for at most one $i$.
        If $e\in E$ is traversed exactly once in this way, we write $f\in F(e)\subset F$ and $e\in E(f)\subset E$. Likewise, if $v\in V$ is traversed exactly once, we write $f\in F(v)\subset F$ and $v\in V(f)\subset V$.
        \item For every seam $e\in E$ a cyclic ordering of the set $F(e)$ of facets adjacent to $e$.
    \end{itemize}
\end{definition}

\begin{remark} \label{rem:cyclicSign}
    Note that, when $\ell>1$, the cyclic word $(e_1^{\varepsilon_1},\dots e_\ell^{\varepsilon_\ell})\in W(E)$ is completely determined by the tuple $(e_1,\ldots,e_\ell)$ and the sign $\varepsilon_i$ of any of its arcs $e_i$. If $\ell>2$, even more is true: $(e_1^{\varepsilon_1},\dots e_\ell^{\varepsilon_\ell})\in W(E)$ is completely determined by just the tuple $(e_1,\ldots,e_\ell)$. Therefore, we will omit the signs $\varepsilon_1,\ldots,\varepsilon_\ell$ in this case.
\end{remark}

In the following, we gather a few useful concepts relating to combinatorial closed foams:
\begin{definition}\label{def:combclosedfoam2}~
\begin{itemize}
    \item Consider an oriented boundary word $\partial_i(f)=(e_1^{\varepsilon_1},\ldots ,e_\ell^{\varepsilon_\ell})\in W(E)$ associated to a facet $f$. For any $1\leq j\leq\ell$, we say that $f$ \emph{induces the orientation} of the arc $e_j$ if $\varepsilon_j=+$, and that $f$ \emph{induces the opposite orientation} on $e_j$ if $\varepsilon_j=-$.
    \item Given a combinatorial closed foam, the \emph{link of a vertex} $v\in V$ is the graph $\mathrm{Link}(v)$ with nodes $e\in E(v)$ and edges $f\in F(v)$, which are declared to be adjacent if and only if $f\in F(e)$. 
    \item A combinatorial closed foam is called \emph{trivalent} if the following conditions are satisfied:
    \begin{enumerate}[(i)]
        \item For every $e\in E$ we have $|F(e)|=3$.
        \item For every $v\in V$ the graph $\mathrm{Link}(v)$ is isomorphic to the 1-skeleton of a tetrahedron. In particular, we have $|E(v)|=4$ for every $v\in V$.
        \item For every arc $e\in E_{\text{arc}}$, exactly two out of the three adjacent facets, denoted by $f_1$ and $f_2$, induce the orientation of $e$, while the third, denoted by $f_3$, induces the opposite orientation.
    \end{enumerate} 
    \item A combinatorial closed foam is called \emph{nilvalent} if $E=\emptyset$ and $V=\emptyset$.
    \item Given a combinatorial closed foam $\Sigma$, its \emph{mirror} $\Sigma^m$ is obtained by reversing all cyclic orderings and its \emph{reverse} $\Sigma^r$ by reversing the orientations of all arcs and reversing boundary words, namely 
    \[
    E_{\text{arc}}\ni e \;\overset{r}{\mapsto}\; e^- \quad \text{ and } \quad W(E)\ni (e_1^{\varepsilon_1},\ldots ,e_\ell^{\varepsilon_\ell}) \;\overset{r}{\mapsto}\; (e_\ell^{-\varepsilon_\ell},\ldots ,e_1^{-\varepsilon_1}).
    \]
\end{itemize}   
\end{definition}

In the following, we will mostly consider nilvalent and trivalent foams. In a trivalent foam, there are exactly six facets adjacent to every vertex.

\begin{definition}[Labeling data for trivalent and nilvalent foams]
\label{def:labdata}
    Let $\kb$ be a commutative ring. A \emph{$\kb$-linear labeling datum} $\labdata$ for trivalent combinatorial foams consists of a semigroup $\labdata_1$ with associative multiplication denoted $+\colon \labdata_1\times \labdata_1 \to \labdata_1$, and for each $a\in \labdata_1$ the data of a commutative $\kb$-algebra $\Dec(a)$, called the \emph{algebra of decorations on an $a$-facet}. If the operation has a neutral element $0\in \labdata_1$, we additionally require $\Dec(0)\cong \kb$.

    A \emph{$\kb$-linear labeling datum} $\labdata$ for nilvalent foams consists of a set $\labdata_1$ and for each $a\in \labdata_1$ the data of a commutative $\kb$-algebra $\Dec(a)$.
\end{definition}

\begin{definition}[Combinatorial closed $\labdata$-foam] \label{def:labclosedfoam} Let $\kb$ be a commutative ring and $\labdata$ as in Definition~\ref{def:labdata}. A \emph{combinatorial closed $\labdata$-foam} consists of the following data:
    \begin{enumerate}
        \item A combinatorial closed foam as in Definition~\ref{def:combclosedfoam} with sets of vertices, seams, and facets denoted by $V$, $E$, and $F$ respectively.
        \item A labeling $\labelfunction\colon F \to \labdata_1$ of all facets by elements of $\labdata_1$.
         \item A decoration of the facets by an element $\Dec(F)\in \bigotimes_{f\in F} \Dec(\labelfunction(f))$. 
    \end{enumerate}
    In the nilvalent case, this finishes the definition. In the trivalent case, the data is subject to the following requirements:
    \begin{itemize}
         \item For every arc seam $e\in E_{\text{arc}}$, we refine the cyclic order on $F(e)$ to a total order $(f_1,f_2,f_3)$ by requiring that $f_3$ is the edge with orientation incompatible with the others. Then, we require that the labels of $f_1,f_2,f_3$ satisfy:
         \[\labelfunction(f_1) + \labelfunction(f_2) = \labelfunction(f_3).\]
         Moreover, the same requirement must hold for the facets adjacent to loop seams $e\in E_{\text{loop}}$, for appropriate assignments of names $f_1,f_2,f_3$.
         
        \item For every vertex $v\in V$ exactly two of the adjacent seams $e_1,e_2$ are oriented into the vertex and two seams $e_3,e_4$ are oriented out of the vertex, so that the six adjacent facets $f_1,\dots, f_6$ have one of the following two label configurations: 
        \begin{align*}
        \labelfunction(f_1) + \labelfunction(f_2) &\overset{e_1}{=} \labelfunction(f_4)    && 
        \labelfunction(f_2) + \labelfunction(f_3) \overset{e_1}{=} \labelfunction(f_4)\\ 
        \labelfunction(f_4) + \labelfunction(f_3) &\overset{e_2}{=} \labelfunction(f_6)&& 
        \labelfunction(f_1) + \labelfunction(f_4) \overset{e_2}{=} \labelfunction(f_6)\\
        \labelfunction(f_2) + \labelfunction(f_3) &\overset{e_3}{=} \labelfunction(f_5) && 
        \labelfunction(f_1) + \labelfunction(f_2) \overset{e_3}{=} \labelfunction(f_5)\\
        \labelfunction(f_1) + \labelfunction(f_5) &\overset{e_4}{=} \labelfunction(f_6) && 
        \labelfunction(f_5) + \labelfunction(f_3) \overset{e_4}{=} \labelfunction(f_6)
        \end{align*}
        Here, $f_1,f_2,f_3$ induce the orientations of $e_1,e_2,e_3,e_4$, while $f_6$ induces the opposite orientation. Note that, in each equation, the ordering of the summands represents the total order of the three facets adjacent to the corresponding seam, as per the first point. 
    \end{itemize}    
\end{definition}

\begin{remark}
Definitions~\ref{def:labdata} and~\ref{def:labclosedfoam} can be straightforwardly generalized to classes of foams of other valency or of other orientation behavior. Since the trivalent and nilvalent cases suffice to cover our intended main examples, we will not pursue such generalizations in this paper. 
\end{remark}

\subsection{Evaluation}
Fix a commutative ring $\kb$ and a $\kb$-linear labeling datum $\labdata$. The set of combinatorial closed $\labdata$-foams with the same underlying data (1) and (2) in Definition~\ref{def:labclosedfoam} is parametrized by the $\kb$-module of decorations on the facets (3). We let 
\[\HomPreFoamCat[\labdata](\emptyset,\emptyset)\] 
denote the $\kb$-module obtained as the coproduct of these $\kb$-modules as (1) and (2) range over all combinatorial closed foams and labelings. The notation foreshadows \Cref{def:labfoammorphism}.

    \begin{definition} \label{def:eval} 
    Let $\kb$ be a commutative ring and $\labdata$ a $\kb$-linear labeling datum.       An \emph{evaluation} of closed $\labdata$-foams is a $\kb$-linear function 
         \[
         \eval \colon \HomPreFoamCat[\labdata](\emptyset,\emptyset) \to \kb
         \]
         associating to every combinatorial closed $\labdata$-foam an element of the commutative ring $\kb$.
         
         An evaluation is called \emph{multiplicative} if:
         \begin{enumerate}
             \item $\eval(\emptyset)=1\in \kb$, where $\emptyset$ denotes the empty combinatorial closed $\labdata$-foam;
             \item $\eval(\Sigma_1\sqcup \Sigma_2)=\eval(\Sigma_1)\cdot\eval(\Sigma_2)$ for any pair of combinatorial closed $\labdata$-foams $\Sigma_1,\Sigma_2$.
         \end{enumerate}
         
    \end{definition}

        \begin{definition}
            \label{def:involutive}
Let $\iota\colon \kb \to \kb$ be an involution of $\kb$ (i.e.~a ring automorphism squaring to the identity) and suppose that the labeling data $\labdata$ includes for every $a\in \labdata_1$ the data of a compatible involution on the $\kb$-algebra $\Dec(a)$, also denoted by $\iota$. 

        Then we define an involution on combinatorial closed $\labdata$-foams 
        \[
        -^{r}\colon \HomPreFoamCat[\labdata](\emptyset,\emptyset) \to \HomPreFoamCat[\labdata](\emptyset,\emptyset)
        \]
        acting by $\Sigma\to \Sigma^r$ (from \Cref{def:combclosedfoam2}) on underlying foams and by the involution $\iota$ on all decorations. The evaluation $\eval$ is called ($\iota$-)\emph{involutive} if for any combinatorial closed $\labdata$-foam $\Sigma$ we have
         \[
            \eval(\Sigma^r)=\iota(\eval(\Sigma)).
         \]
        \end{definition}

        \begin{example}[Khovanov--Bar-Natan decorated surfaces] \label{ex:BNeval} Let $A$ be a commutative Frobenius algebra over a commutative ring $\kb$ with multiplication $m\colon A\otimes A \to A$, comultiplication $\Delta\colon A \to A \otimes A$, Frobenius trace $\epsilon \colon A \to \kb$ and handle operator $H:= m\circ \Delta\colon A \to A$. 
        
        A $\kb$-linear labeling datum $\labdata^A$ for nilvalent foams is specified by letting $\labdata^A_1:=\{*\}$ be the one-element set and declaring $\Dec(*):=A$.

        The data of a combinatorial closed $\labdata^A$-foam is then nothing but a finite family $F$ of (closed) facets, decorated by an element of the tensor power $A^{\otimes F}$. The only invariant of a closed facet $f\in F$ is its genus $g_f\in \N_0$, so we can equivalently encode such a foam as a pair $\left( (g_f)_{f\in F},a\right) \in \N_0^F\times A^{\otimes F}$. An evaluation is given by the $\kb$-linear extension of:
        \[
        \eval \colon \labclosedFoamModule{\labdata^A} \to \kb, \quad 
        \left( (g_f)_{f\in F},\otimes_{f\in F} a_f)\right)  \mapsto \prod_{f\in F} \epsilon(H^{g_f} a_f)\in \kb
        \]
    \end{example}

    \begin{example}[Robert--Wagner $\mf{gl}_N$-foams] \label{ex:glneval} Let $N\in \N$ and $\kb=\Z$. The labeling datum $\labdata^N$ for trivalent foams has as underlying semigroup $\labdata^N_1:=(\N_0,+)$ the natural numbers with addition, and the algebra of decorations on an $n$-labeled facet for $n\in\N$ is given by the ring $\Dec(n)=\Z[x_1,\ldots,x_n]^{S_n}$ of symmetric polynomials in $n$ variables.
    
        Then combinatorial closed $\labdata^{N}$-foams correspond to the $\mf{gl}_N$-foams of Robert--Wagner and a closed foam evaluation formula is provided by the main theorem of \cite{RoW}.

        The evaluation of $\mf{gl}_2$-foams is due to Blanchet~\cite{Bla}.
        
    \end{example}

        \begin{example}[Robert--Wagner equivariant $\mf{gl}_N$-foams] \label{ex:GLneval} Let $N\in \N$ and $\mathbb{K}=\Z$. The Robert--Wagner foam evaluation generalizes to the equivariant setting, which is defined over the base ring $\kb:=H^*_{\mathrm{GL}(N)}(*)\cong \Z[X_1,\dots,X_N]^{S_N}$, the $\mathrm{GL}(N)$-equivariant cohomology of the point. The corresponding labeling datum $\labdata^{\mathrm{GL}(N)}$ for trivalent foams again has $\labdata^{\mathrm{GL}(N)}_1:=(\N_0,+)$ and $\Dec(n)=\kb[x_1,\ldots,x_n]^{S_n}$ for $n\in \N_0$. A closed foam evaluation formula is provided by the main theorem of \cite{RoW}. Indeed, the evaluation from \Cref{ex:glneval} results from this equivariant evaluation by setting the variables $X_1,\dots, X_n$ to zero.
        
    \end{example}

\section{TQFT for webs and foams}
\label{sec:TQFT}

\subsection{Foams with boundary and webs}
\label{sec:boundaryweb}

We now extend Definition~\ref{def:combclosedfoam} to the case of foams with boundary.

\begin{definition}
\label{def:combboundaryfoam}
    A \emph{combinatorial foam with boundary} consists of:
    \begin{itemize}
        \item A finite set $V$ of \emph{vertices} as in Definition~\ref{def:combclosedfoam}. 
        \item A finite set $E=E_{\text{loop}}\sqcup E_{\text{arc}}$ of \emph{seams} as in Definition~\ref{def:combclosedfoam}.
        \item An additional finite set $V^{\partial}$ of \emph{boundary vertices}. We set $\overline{V}=V\sqcup V^{\partial}$.
        
        \item An additional finite set $E^{\partial}$ of \emph{boundary edges}, partitioned into \emph{boundary loops} and \emph{oriented boundary arcs}, $E^{\partial}=E^{\partial}_{\text{loop}}\sqcup E^{\partial}_{\text{arc}}$, so that we can write:
        \[
        \overline{E}_{\text{loop}} := E_{\text{loop}}\sqcup E^{\partial}_{\text{loop}}
        , \quad
        \overline{E}_{\text{arc}} := E_{\text{arc}}\sqcup E^{\partial}_{\text{arc}}
        ,\quad 
        \overline{E}:=E\sqcup E^{\partial}.\]
        
        \item For each $e\in E_{\text{arc}}$, a set $V(e)\subset \overline{V}$ of exactly two vertices \emph{adjacent} to $e$, one designated as source and the other as target. For every $v\in \overline{V}$, we then write $E(v)\subset E_{\text{arc}}$ for the subset of seams adjacent to $v$.
        \item For each $e\in E^{\partial}_{\text{arc}}$, a set $V^{\partial}(e)\subset V^{\partial}$ of exactly two boundary vertices \emph{adjacent} to $e$, one designated as source $s(e)$ and the other as target $t(e)$. 
        For every boundary vertex $v\in V^{\partial}$, we write $E^{\partial}(v)\subset E^{\partial}_{\text{arc}}$ for the subset of boundary edges adjacent to $v$ and $\overline{E}(v)= E(v)\sqcup E^{\partial}(v)\subset \overline{E}_{\text{arc}}$ for the subset of all seams and edges adjacent to $v$.

        As in Definition~\ref{def:combclosedfoam}, we consider the set $W(\overline{E})$ of cyclic words of positive length~$\ell$ formed by composable seams and edges, equipped with signs $+$ or $-$ when $\ell >1$. 
        More precisely, words of length one are formed by loops in $\overline{E}_{\text{loop}}$, while words of length $\ell>1$ are tuples $(e_1^{\varepsilon_1},\dots e_\ell^{\varepsilon_\ell})$, considered up to cyclic permutation, where $\varepsilon_i\in\{-,+\}$ and $e_i\in \overline{E}_{\text{arc}}$ for all $i$, and the $e_i^{\varepsilon_i}$ are composable and all have distinct sources. We additionally require that if $e_i\in E^\partial_{\text{arc}}$, it appears with sign $\varepsilon_i=+$.
        \item A finite set $F$ of \emph{facets}, partitioned as $F=\bigsqcup F_{g,n}$ as in Definition \ref{def:combclosedfoam}. 
        \item Associated with every $f\in F_{g,n}$ is a set of \emph{boundary words} $\partial(f)\in \powerset(W(\overline{E}))$ of cardinality $|\partial(f)|=n$. The elements of $\partial(f)$ are cyclic words $\partial_i(f)\in W(\overline{E})$, such that every $e\in \overline{E}$ and every $v\in \overline{V}$ is traversed in $\partial_i(f)$ for at most one $i$. If $e\in \overline{E}$ is traversed exactly once in this way, we write $f\in F(e)\subset F$ and $e\in \overline{E}(f)\subset \overline{E}$.
        \item For every seam $e\in E$ a cyclic ordering of the set $F(e)$ of facets adjacent to $e$.
        \item For $v\in V^\partial$ and $e\in E^\partial$ we require $|E(v)|=1$ and $|F(e)|=1$.
    \end{itemize}
    The set of combinatorial foams with boundary is denoted $\boundaryFoamSet$.
\end{definition}

Note that for each boundary vertex $v\in V^\partial$ the adjacent boundary edges $E^\partial(v)$ are in canonical bijection with the facets $F(e(v))$ adjacent to the unique seam $e(v)\in E$ that is adjacent to $v$, and hence also cyclically ordered. 
\smallskip

\begin{definition}~ \label{def:combboundaryfoam2}
    \begin{itemize}
        \item A combinatorial foam with boundary is called \emph{trivalent} if the following conditions hold:
        \begin{enumerate}[(i)]
            \item $|F(e)|=|E^\partial(v)|=3$ for every $e\in E$ and $v\in V^\partial$.
            \item For every $v\in V$ the graph $\mathrm{Link}(v)$ is isomorphic to the 1-skeleton of a tetrahedron.
            \item For every arc $e\in E_{\text{arc}}$, exactly two out of the three adjacent facets, denoted by $f_1$ and $f_2$, induce the orientation of $e$, while the third, denoted by $f_3$, induces the opposite orientation. Note that for an appropriate assignment of the names $f_1$ and $f_2$, this condition refines the cyclic order on $F(e)$ to a total order $(f_1,f_2,f_3)$.
        \end{enumerate}
        \item A combinatorial foam with boundary is \emph{nilvalent} if $E=\emptyset$, $V=\emptyset$, and $V^\partial=\emptyset$.
    \end{itemize}
\end{definition}

\begin{remark}\label{rem:trivalent}
    In a trivalent combinatorial foam with boundary, consider a facet $f$ with boundary $\partial(f)=\{\partial_1(f),\ldots,\partial_n(f)\}$ and suppose that a boundary component $\partial_i(f)$ traverses a boundary edge $e\in E^\partial$. If we call $s_1$ and $s_2$ the unique seams in $E(s(e))$ and $E(t(e))$ respectively, then it is an easy exercise to check that $(s_1^{\varepsilon_1},e,s_2^{\varepsilon_2})$ is a subword of $\partial_i(f)$, for some $\varepsilon_1,\varepsilon_2\in\{-,+\}$.
\end{remark}

\newcommand{\webs}[1]{\mathrm{Webs}_{#1}}

We now describe the boundaries of foams from Definition~\ref{def:combboundaryfoam}.

\begin{definition}\label{def:combclosedweb}
	A \emph{combinatorial closed web} consists of:
	\begin{itemize}
		\item A finite set $V^\partial$ of \emph{vertices}.
		\item A finite set $E^{\partial}$ of \emph{edges}, partitioned into \emph{loops} and \emph{oriented arcs}, $E^{\partial}=E^{\partial}_{\text{loop}}\sqcup E^{\partial}_{\text{arc}}$.
        \item  For each $e\in E^{\partial}_{\text{arc}}$, a set $V^{\partial}(e)\subset V^{\partial}$ of exactly two vertices \emph{adjacent} to $e$, one designated as source $s(e)$, the other as target $t(e)$. 
        For every vertex $v\in V^{\partial}$, we write $E^{\partial}(v)\subset E^{\partial}_{\text{arc}}$ for the subset of edges adjacent to $v$.
        \item For every vertex $v\in V^{\partial}$ a cyclic ordering of the set $E^{\partial}(v)$ of edges adjacent to $v$.
	\end{itemize}
Note that the disjoint union of two webs is again a web.
The set of combinatorial closed webs is denoted $\WebSet$.
\end{definition}
\smallskip

\begin{definition}~ \label{def:combclosedweb2}
    \begin{itemize}
        \item A combinatorial closed web is called \emph{trivalent} if, for every $v\in V^\partial$, $|E^\partial(v)|=3$ and exactly two out of the three adjacent arc edges have compatible orientation w.r.t.\ $v$. If they both have $v$ as their source, then $v$ is called a \emph{split vertex}; otherwise they both have $v$ as their target and $v$ is called a \emph{merge vertex}. 
        \item A combinatorial closed web is called \emph{nilvalent} if $V^\partial=\emptyset$ and hence also $E^{\partial}_{\text{arc}}=\emptyset$.
        \item For a given web $W$, we consider the \emph{mirror} $W^m$, obtained by reversing all cyclic orderings, and the \emph{reverse} $W^r$, obtained by reversing the orientations on all arc edges.
        Note that these operations are involutions and that split vertices in $W$ turn into merge vertices in 
        and vice versa.
    \end{itemize}
\end{definition}

Every combinatorial foam with boundary from Definition~\ref{def:combboundaryfoam} defines a combinatorial closed web. 
Indeed, we have already observed, that the boundary edges adjacent to a fixed boundary vertex inherit a cyclic ordering from the cyclic ordering of the corresponding facets around their shared seam.
Moreover, note that, by construction, for every boundary arc $e\in E_{\text{arc}}^\partial$, the unique adjacent facet $f\in F(e)$ induces the orientation of $e$. Along with condition (iii) of \Cref{def:combboundaryfoam2}, this implies that the boundary of a trivalent foam with boundary is a trivalent web.
The map that associates to each combinatorial foam with boundary its boundary web will be denoted:
\[
\partial\colon  \boundaryFoamSet \to \WebSet
\]

\begin{definition}
\label{def:foammorphism}
    Given webs $W_1,W_2\in \WebSet$, a foam $\Sigma\colon W_1\to W_2$ is defined to be a foam $\Sigma\in \boundaryFoamSet $ with boundary $\partial(\Sigma)=W_2 \sqcup W_1^r$.
\end{definition}

Note that a foam $\Sigma$ with boundary $\partial(\Sigma)=W_2 \sqcup W_1^r$ can be considered both as a foam $W_1\to W_2$ or as a foam $W_2^r\to W_1^r$.

\begin{construction}[Composition of foams]
\label{constr:foamcomposition}
    
    Given two foams $\Sigma_1\colon W_1\to W_2$ and $\Sigma_2\colon W_2\to W_3$ that are both either trivalent or nilvalent, let $\overline{V}(\Sigma_i)$, $\overline{E}(\Sigma_i)$ and $F(\Sigma_i)$, for $i=1,2$, be their sets of vertices, seams and facets. Note that $V^\partial(\Sigma_i)=V^\partial_i\sqcup V^\partial_{i+1}$ and $E^\partial(\Sigma_i)=E^\partial_i\sqcup E^\partial_{i+1}$, where $V^\partial_j$ and $E^\partial_j$ are respectively the sets of vertices and edges of $W_j$.
    We construct the trivalent (respectively nilvalent) combinatorial foam $\Sigma_2\circ\Sigma_1\colon W_1\to W_3$ as follows:
    \begin{itemize}
        \item The set of vertices is $V=V(\Sigma_1)\sqcup V(\Sigma_2)$.
        \item The sets of boundary vertices and boundary edges are, respectively, $V^\partial=V^\partial_1\sqcup V^\partial_3$ and $E^\partial=E^\partial_1\sqcup E^\partial_3$.
        \item The set $E=E_{\text{loop}}\sqcup E_{\text{arc}}$ of seams consists of the following:
         \begin{align*} 
         E_{\text{loop}} &:= E_{\text{loop}}(\Sigma_1)\sqcup E_{\text{loop}}(\Sigma_2) \sqcup N_{\text{loop}}
         \\
         E_{\text{arc}} &:= \{e\in E_{\text{arc}}(\Sigma_1)\sqcup E_{\text{arc}}(\Sigma_2)\mid V(e)\cap V^\partial_2=\emptyset \} \sqcup N_{\text{arc}}
         \end{align*}
         for sets $N_{\text{loop}}$ and $N_{\text{arc}}$ of \emph{new loops} and \emph{new arcs} to be described as follows.
             
             Consider a vertex $v\in V^\partial_2$. By Definition \ref{def:combboundaryfoam}, there are exactly one seam in $E_{\text{arc}}(\Sigma_1)$ and one in $E_{\text{arc}}(\Sigma_2)$ that are adjacent to $v$. Therefore, $v$ is part of a unique maximal path $[v]:=(v_1 \xleftrightarrow{e_1} v_2 \xleftrightarrow{e_2} \cdots \xleftrightarrow{e_k} v_{k+1})$ of pairwise distinct seams $e_i\in E_{\text{arc}}(\Sigma_1)\sqcup E_{\text{arc}}(\Sigma_2)$ through boundary vertices $v_2,\ldots,v_{k}\in V^\partial_2$. Appearing in the same such path defines an equivalence relation on $V^\partial_2$. We define the set of new seams to be the set of equivalence classes:
             \[ N_{\text{loop}}\sqcup N_{\text{arc}}:=\{[v]\mid v\in V^\partial_2\}
             \]
            To determine the partition into loops and arcs we consider two cases for the path $[v]=(v_1 \xleftrightarrow{e_1} v_2 \xleftrightarrow{e_2} \cdots \xleftrightarrow{e_k} v_{k+1})$:
            If $v_1,v_{k+1}\notin V^\partial_2$, then we consider the path a new arc, i.e.~ set $[v]\in N_{\text{arc}}$, and declare $V([v]):=\{v_1,v_{k+1}\}$. The seams $e_1\dots e_k$ carry consistent orientations and we equip $V([v])$ with the induced orientation. If, on the other hand $v_1\in V^\partial_2$ or $v_{k+1}\in V^\partial_2$, then by maximality we have $v_1=v_{k+1}$ and we consider the new seam a loop $[v]\in N_{\text{loop}}$. 
            
        \item  The set $F=\bigsqcup F_{g,n}$ of facets is given by the quotient $(F(\Sigma_1)\sqcup F(\Sigma_2))/\sim$ with respect to the equivalence relation on facets generated by
        \[f\sim f' \iff  E_2^\partial(f)\cap E_2^\partial(f')\neq \emptyset \]
        i.e.\ sharing a common boundary edge in $E_2^\partial$. We denote the equivalence class of a facet $f\in F(\Sigma_1)\sqcup F(\Sigma_2)$ by $[f]$ and its induced partition into facets from $F(\Sigma_1)$ and $F(\Sigma_2)$ by $[f]= [f]_1 \sqcup [f]_2$. 
        
        Motivated by Remark~\ref{rem:trivalent} we like to think of $[f]$ as a new facet obtained from the sets of facets $[f]_1$ and $[f]_2$ by gluing along adjacent loops (in the nilvalent and trivalent cases) and adjacent arcs (in the trivalent case) in $E^\partial_2$. More specifically, Remark~\ref{rem:trivalent} guarantees that facets never have to be glued along paths of boundary arcs of length greater than one.

        \item The set of boundary words $\partial([f])\subset \powerset(W(\overline{E}))$ of $[f]\in F$ is obtained from the set 
        \[
        \bigsqcup_{f'\in [f]_1} \partial(f') \sqcup \bigsqcup_{f''\in [f]_2} \partial(f'') \subset \powerset(W(\overline{E}(\Sigma_1) \cup \overline{E}(\Sigma_2))) 
        \]
        by the following algorithm, which iteratively applies the following steps:
        \begin{enumerate}[(a)]
            \item We first remove all pairs $\partial_i(f'), \partial_j(f'')$ of occurrences of 1-element boundary words given by common loops in $E_2^\partial$
            \[
            \hspace{2.3cm}
            \partial_i(f') \in W(E_{2,\text{loop}}^\partial \cap \bigcup_{f'\in [f]_1} E^\partial_2(f')) \quad \text{ and } \quad \partial_j(f'')\in W(E_{2,\text{loop}}^\partial \cap  \bigcup_{f''\in [f]_2} E^\partial_2(f'')) \]
            Let $g_+([f])$ denote the number of times this operation is performed.
            \item[(b1)] We consider all edges $e\in E^\partial_{2,\text{arc}}$ that appear twice among the remaining words. The two occurrences originate from boundary words of the two unique facets adjacent to $e$ in $\Sigma_1$ and $\Sigma_2$ respectively. The edge $e$ is thus traversed in opposite orientations. First we consider the case when $e$ appears in two \emph{distinct} cyclic words of lengths $k,l\geq 2$:
            \[
            \hspace{2.3cm}(v_1 \xrightarrow{e_1^{\epsilon_1}} \cdots  \xrightarrow{e_{k-1}^{\epsilon_{k-1}}} v_k \xrightarrow{e^{\epsilon}} v_{1}) \text{ and }
            (v_1 \xrightarrow{e^{-\epsilon}} v_{k} \xrightarrow{e_{k}^{\epsilon_{k}}} \cdots  \xrightarrow{e_{k+l-2}^{\epsilon_{k+l-2}}} v_{1}) \]
            Here we remove the two cyclic words and append the grafted cyclic word of length $k+l-2$ given by
            \[
            (v_1 \xrightarrow{e_1^{\epsilon_1}} \cdots  \xrightarrow{e_{k-1}^{\epsilon_{k-1}}} v_k \xrightarrow{e_{k}^{\epsilon_{k}}} \cdots  \xrightarrow{e_{k+l-2}^{\epsilon_{k+l-2}}} v_{1})\]
            Let $n_-([f])$ denote the number of times this operation is performed.
             \item[(b2)] For the second case, we assume that the edge $e\in E^\partial_{2,\text{arc}}$ appears twice in one cyclic word of length $k+l\geq 4$:
            \[\hspace{2.3cm}
            (v_1 \xrightarrow{e_1^{\epsilon_1}} \cdots  \xrightarrow{e_{k-1}^{\epsilon_{k-1}}} v_k \xrightarrow{e^{\epsilon}} v_{k+1} \xrightarrow{e_{k+1}^{\epsilon_{k+1}}} \cdots  
            \xrightarrow{e_{k+l-3}^{\epsilon_{k+l-3}}} v_{k+l-2} \xrightarrow{e_{k+l-2}^{\epsilon_{k+l-2}}} v_{k+1}\xrightarrow{e^{-\epsilon}} v_{k}\xrightarrow{e_{k+l}^{\epsilon_{k+l}}} v_{1}) 
            \]
             We now remove the cyclic word and append the two cyclic words given by
            \[
            \hspace{2.3cm}(v_1 \xrightarrow{e_1^{\epsilon_1}} \cdots  \xrightarrow{e_{k-1}^{\epsilon_{k-1}}} v_k \xrightarrow{e_{k+l}^{\epsilon_{k+l}}} v_{1}) \text{ and }
            (v_{k+1} \xrightarrow{e_{k+1}^{\epsilon_{k+1}}} \cdots  \xrightarrow{e_{k+l-3}^{\epsilon_{k+l-3}}} v_{k+l-2} \xrightarrow{e_{k+l-2}^{\epsilon_{k+l-2}}} v_{k+1}) 
            \]
            Let $n_+([f])$ denote the number of times this operation is performed.
            \item[(c)] For every cyclic word traversing at least one boundary vertex $v\in V_2^\partial$, the unique maximal path $[v]$ described above appears as a subword; contract this subword to the single letter $[v]\in N_{\text{loop}}\sqcup N_{\text{arc}}\subset \overline{E}$. 
        \end{enumerate}
            It is clear that this algorithm terminates after finitely many steps, since every step reduces the total number of letters in all boundary words of facets. We leave it to the reader to check that the result does not depend on the order in which the steps of type (b1) and (b2) are applied.
           For a finite set of facets $F'$ we let $g(F'), n(F')\in \N_0$ denote the sum total genus resp. number of boundary components of all facets in $F'$. For $i\in\{1,2\}$, we set $g_i([f]):=g([f]_i),n_i([f]):=n([f]_i)\in \N_0$. The number of boundary components and genus of the new facet $[f]$ are defined as
        \begin{align*}
        g([f])&:=g_1([f])+g_2([f]) + n_-([f]) + g_+([f]) + 1 - |[f]_1| -|[f]_2| \\
        n([f])&:= |\partial([f])| =  n_1([f]) + n_2([f]) - 2\cdot g_+([f]) - n_-([f]) + n_+([f]).
        \end{align*} 
       \item  One can check that every edge every $e\in \overline{E}$ and every $v\in \overline{V}$ is traversed at most once in $\partial_i([f])$ and for at most one $i$. This uses the assumed trivalency or nilvalency. In particular, each seam $e\in E$ is adjacent to exactly three facets and these inherit the cyclic ordering from their constituent facets in $\Sigma_1$ and $\Sigma_2$. We leave it as an exercise to show that the cyclic orderings are well-defined for seams in $N_{\text{loop}}\sqcup N_{\text{arc}}$.
        \end{itemize}

\end{construction}

        \begin{remark} \label{rem:Euler} The elements $f\in F_{g,n}$ can informally be thought of as connected oriented surfaces of genus $g$ and $n$ boundary components and, thus, Euler characteristic $\chi(f) = 2 - n - 2g$. As a plausibility check, we now compare the update rule for genus for a glued facet in \Cref{constr:foamcomposition} with the gluing behavior of the Euler characteristic. Recall that $\chi$ is additive under disjoint union and constant under self-gluing along circles, while it decreases by $1$ under gluing along an interval. The Euler characteristic of the glued facet is, on the one hand $2- n([f]) - 2\cdot  g([f])$, or expanded:
           \[
             2 -n_1([f]) - n_2([f]) + 2\cdot g_+([f]) + n_-([f]) - n_+([f]) - 2\cdot  g([f]) 
           \]
           On the other hand, we compute it from the Euler characteristics of all facets in $[f]_1$ and $[f]_2$ and the number of interval gluings:
           \[
           2|[f]_1| + 2|[f]_2| -n_1([f]) - n_2([f]) - 2\cdot g_1([f]) - 2\cdot g_2([f]) - n_+([f]) -n_-([f])
           \]
           Equating the two expressions and solving for  $g([f])$, we obtain:
           \[
            g([f]) =  g_1([f])+g_2([f])+ n_-([f]) + g_+([f]) + 1 - |[f]_1| -|[f]_2|
           \]
    Similarly, one can informally think of the elements $e\in \overline{E}$ as connected oriented 1-manifolds, closed if $e\in \overline{E}_{\text{loop}}$ and with boundary if $e\in \overline{E}_{\text{arc}}$. Consequently, we set $\chi(e)=0$ in the first case and $\chi(e)=1$ in the second. Lastly, we set $\chi(v)=1$ for a vertex $v\in \overline{V}$, and informally view it as a point.
        \end{remark}
        
\begin{definition}[Identity foam] \label{def:idcombfoam}
    Given a (trivalent or nilvalent) combinatorial closed web $W$, we define the \emph{identity foam on $W$} to be the (trivalent or nilvalent) combinatorial foam $\id_W\colon W \to W$ constructed as follows:
    \begin{itemize}
        \item We rename the two webs constituting the boundary of the desired foam $\id_W$ by $W_1:=W^r$ and $W_2:=W$. For $i\in\{1,2\}$, denote by $V^\partial_i$ and $E^\partial_i$ the sets of vertices and edges of $W_i$ and note that we have canonical bijections $V^\partial_1\cong V^\partial_2$ and $E^\partial_1\cong E^\partial_2$. The set $\overline{V}$ of vertices of $\id_W$ consists, by definition, entirely of the boundary vertices $\overline{V}=V^\partial=V^\partial_1\sqcup V^\partial_2$ and no internal vertices $V=\emptyset$.
        \item The set of boundary edges is declared to be $E^\partial=E^\partial_1\sqcup E^\partial_2$.
        \item The set $E=E_{\text{loop}}\sqcup E_{\text{arc}}$ of seams is given by $E_{\text{loop}}=\emptyset$ and $E_\text{arc}=\{e_{v}\}_{v\in V^\partial_2}$, where, for each merge/split vertex $v\in V^\partial_2$, we let $v^r\in V^\partial_1$ be the corresponding split/merge vertex of $W_1$ under the canonical bijection and define $e_{v}$ as the seam with $V(e_{v})=\{v,v^r\}$, oriented from the merge vertex to the split vertex. 
        \item The set $F$ of facets is given by $F=F_{0,1}\sqcup F_{0,2}=\{f_{e}\}_{e\in E^\partial_2}$ (in particular, $F_{g,n}=\emptyset$ for $g>0$ or $n>2$), where $f_{e}$ is defined as follows: for each $e\in E^\partial_2$, let $e^r\in E^\partial_1$ be the corresponding edge of $W_1$ under the canonical bijection. If $e$ is a loop, let $f_{e}\in F_{0,2}$ be the facet with boundary $\{\partial_1(f_{e}),\partial_2(f_{e})\}=\{(e),(e^r)\}$. If $e$ is an arc with source $v$ and target $v'$, let $f_{e}\in F_{0,1}$ be the facet with boundary $\partial(f_{e})=(e,e_{v'},e^r,e_v)$.
        \item Given a seam $e_v\in E$ with $v\in V^\partial_2$, the three facets $f_{e},\, f_{e'}$ and $f_{e''}$ adjacent to it inherit a cyclic ordering from the cyclic ordering of the edges $e,\,e',\,e''\in E^\partial_2$ adjacent to $v$.
    \end{itemize}
\end{definition}

\begin{prop}
\label{prop:foamcat} The following data defines a small symmetric monoidal category, which we will denote by $\PreFoamCat$ and call the \emph{category of closed webs and prefoams}: 
\begin{itemize}
   \item The set of objects is $\WebSet$, the set of combinatorial closed webs as in Definition~\ref{def:combclosedweb}.
   \item The set of morphisms $W_1\to W_2$ is the set of combinatorial foams as in Definition~\ref{def:foammorphism}.
   \item The composition law for morphisms is given by Construction \ref{constr:foamcomposition}.
   \item For each $W\in \WebSet$, the identity morphism $\id_W$ on $W$ is given in Definition~\ref{def:idcombfoam}.
   \item The symmetric monoidal structure is given on webs and foams by disjoint union, with the empty web serving as tensor unit.
\end{itemize}

\end{prop}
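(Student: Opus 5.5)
We have to check four things: composition is well defined and associative, identities are units, disjoint union is a functor with coherent symmetric monoidal structure, and the braiding is natural. One caveat is built into the statement: combinatorial foams are only meaningful up to isomorphism, i.e.\ bijections of vertices, seams and facets compatible with all structure and fixing the boundary. So I would first specify that morphisms are isomorphism classes of combinatorial foams relative to their boundary. Alternatively, one can fix a universe of names and make canonical choices, such as equivalence classes of the underlying sets. The plan is to work with isomorphism classes, since all the axioms below only hold up to such relabelings.

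\textbf{Well-definedness.} Construction~\ref{constr:foamcomposition} asserts that the result is again a trivalent (respectively nilvalent) combinatorial foam whose boundary is $W_3\sqcup W_1^r$. The exercises left to the reader must be settled here:
\begin{itemize}
\item every seam and vertex is traversed at most once in the glued boundary words;
\item the output is independent of the order in which steps (b1)/(b2) are applied;
\item the cyclic orderings on new seams are well defined;
\item the construction respects isomorphisms of its inputs.
\end{itemize}
For order independence I would argue locally. Each step (b1)/(b2) modifies the words only at the two occurrences of a single edge $e\in E^\partial_{2,\mathrm{arc}}$, and by Remark~\ref{rem:trivalent} distinct such edges are separated by seams. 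Hence any two steps commute up to the resulting choice of grafting, a diamond-lemma argument. The cyclic-order claim follows because along a path $[v]$ all the boundary vertices inherit their cyclic orders from the same web $W_2$ via the two adjacent seams. The trivalency conditions (i)--(iii) for new seams follow from the fact that the facets adjacent to $e_i$ glue bijectively across each $v_i$. Genus is well defined by the Euler characteristic bookkeeping of Remark~\ref{rem:Euler}.

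\textbf{Unit and associativity.} For the unit laws, compose $\Sigma\colon W_1\to W_2$ with $\id_{W_2}$. Each seam of $\Sigma$ ending at $v\in V^\partial_2$ is extended by the seam $e_v$, giving a path $[v]$ whose endpoint is now the matching vertex of the new boundary copy of $W_2$. Each facet $f$ is glued to the disks $f_e$ (for arcs) or annuli (for loops) along boundary edges. For a facet glued to a disk along one arc, step (b1) with $k=4$ leaves the length unchanged, and the genus formula gives $g([f])=g(f)$. For a facet glued to an annulus along a loop, step (a) followed by re-adding the annulus's other loop gives the same conclusion. This yields an explicit isomorphism $\id_{W_2}\circ\Sigma\cong\Sigma$.

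For associativity, I would show that both $(\Sigma_3\circ\Sigma_2)\circ\Sigma_1$ and $\Sigma_3\circ(\Sigma_2\circ\Sigma_1)$ are isomorphic to a one-step gluing of $\Sigma_1\sqcup\Sigma_2\sqcup\Sigma_3$ along $W_2\sqcup W_3$. The new seams are the equivalence classes of maximal paths through $V^\partial_2\cup V^\partial_3$, and the facets are the classes under the equivalence relation generated by sharing an edge in $E^\partial_2\cup E^\partial_3$; both of these are manifestly symmetric in the bracketing. For boundary words, the algorithm steps are local, so applying all steps along $W_2$ and then along $W_3$ gives the same result in either order, by the same confluence argument as above. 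For genus, Euler characteristic is additive in the sense of Remark~\ref{rem:Euler}, so both bracketings yield $\chi$ equal to the sum of the $\chi$'s minus the number of interval gluings. The number of boundary components is determined by the words, so the genus agrees as well.

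\textbf{Monoidal structure.} Disjoint union of webs and foams commutes with the construction, because gluing only identifies data along a shared web. This makes $\sqcup$ a functor, with $\id_{W\sqcup W'}=\id_W\sqcup\id_{W'}$. The associator, unitors and symmetry are the identity foams composed with the canonical relabeling bijections of disjoint unions, i.e.\ the foams $\id$ built on $W\sqcup W'$ with target $W'\sqcup W$. Naturality and the hexagon/pentagon axioms reduce to the corresponding identities for disjoint unions of finite sets.

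I expect the main obstacle to be associativity, together with the confluence of the boundary-word algorithm on which it depends; everything else is bookkeeping of finite sets. The cleanest route would be to redo the facet gluing as an operation on the associated informal surfaces, but staying combinatorial, I would try the confluence argument first.
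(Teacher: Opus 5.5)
\textbf{The gap is in your well-definedness step, and it cannot be closed with the definitions as stated.} In the trivalent case, Construction~\ref{constr:foamcomposition} need not produce a combinatorial foam.

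You list ``every seam and vertex is traversed at most once'' among the things to be settled, but you never argue it. Your justification of trivalency for new seams (the facets adjacent to $e_i$ glue bijectively across each $v_i$) only shows that the three sheets propagate consistently along the path $[v]$. It does not show that they land in three \emph{distinct} classes of $(F(\Sigma_1)\sqcup F(\Sigma_2))/\sim$, because two sheets can be joined through facets far away from $[v]$.

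Here is a concrete counterexample.
\begin{itemize}
\item Let $W$ have a merge vertex $p$, a split vertex $q$, arcs $a,b\colon q\to p$ and $c\colon p\to q$, cyclic order $(a,b,c)$ at both vertices, and a disjoint loop $\ell$.
\item Let $\Sigma_1\colon\emptyset\to W$ have one seam $s_1\colon p\to q$ and facets with $\partial X=\{(a,s_1^+),(\ell)\}$, $\partial Y=\{(b,s_1^+)\}$, $\partial Z=\{(c,s_1^-)\}$, cyclically ordered $(X,Y,Z)$.
\item Let $\Sigma_2\colon W\to\emptyset$ have one seam $s_2\colon q\to p$ and facets with $\partial X'=\{(a,s_2^+)\}$, $\partial Y'=\{(b,s_2^+),(\ell)\}$, $\partial Z'=\{(c,s_2^-)\}$, cyclically ordered $(X',Y',Z')$. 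Here $a,b,c$ carry the orientation of $W^r$.
\end{itemize}
Both foams satisfy Definitions~\ref{def:combboundaryfoam} and~\ref{def:combboundaryfoam2}. Labelling $a,b,\ell$ by $k$ and $c$ by $2k$, they are even $\mathfrak{gl}_N$-labeled foams.

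Now compute $\Sigma_2\circ\Sigma_1$:
\begin{itemize}
\item the seams $s_1,s_2$ form a single new loop $[p]$;
\item the facets $X,X',Y,Y'$ form one class, via $a$, $b$ and $\ell$;
\item the algorithm assigns this class the two boundary words $([p])$ and $([p])$, with $g=0$ and $n=2$.
\end{itemize}
Geometrically this is a tube joining two sheets of a theta foam: an annulus both of whose boundary circles run along $[p]$. It violates the requirement that a seam be traversed in at most one boundary word of a facet. The two words even coincide in $W(E)$, so $\partial([X])$ is not a $2$-element set. Moreover, $[p]$ no longer has three distinct adjacent facets.

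So the class of foams must be enlarged before composition is even well defined. For example, let $\partial(f)$ be a multiset of cyclic words, allow repeated letters and vertices, allow seams with coinciding endpoints, and count trivalency over sheets rather than over distinct facets. After such a modification your plan goes through as sketched:
\begin{itemize}
\item the grafting steps commute, since each one composes the successor permutation of letters with a transposition and then skips the two removed letters;
\item the unit computation is as you describe;
\item associativity via simultaneous gluing along $W_2\sqcup W_3$, together with the $\chi$-bookkeeping of Remark~\ref{rem:Euler}, is a clean way to organise what the paper calls ``inspection''.
\end{itemize}
The nilvalent case is unaffected, since there every surviving boundary loop lies in exactly one glued facet.

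The paper's one-line proof relies on the assertion in Construction~\ref{constr:foamcomposition} that the at-most-once condition follows from trivalency. The example shows this assertion fails, so this is a gap you share with the source rather than one more careful bookkeeping could fill. Your observation that morphisms must be taken up to isomorphism relative to the boundary is correct. It matches how the paper argues elsewhere (compare the proof of Proposition~\ref{prop:foamHomcat}).
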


\begin{proof}
    By inspection of Construction \ref{constr:foamcomposition}, composition is associative and has identity foams acting neutrally on both sides.
\end{proof}

\subsection{Labeled foams with boundary}
\label{sec:labeledfoams}

The definition of combinatorial $\labdata$-foams with boundary is a direct extension of Definition \ref{def:labclosedfoam}.

\begin{definition}[Combinatorial $\labdata$-foam with boundary] \label{def:labboundaryfoam} Let $\kb$ be a commutative ring and $\labdata$ as in Definition~\ref{def:labdata}. A (trivalent or nilvalent) \emph{combinatorial $\labdata$-foam with boundary} consists of the following data:
\begin{enumerate}
    \item A (trivalent or nilvalent) combinatorial foam with boundary as in Definitions \ref{def:combboundaryfoam} and \ref{def:combboundaryfoam2} with sets of vertices, seams and facets denoted by $\overline{V},\,\overline{E}$ and $F$ respectively.
    \item A labeling $\labelfunction\colon F \to \labdata_1$ of all facets by elements of $\labdata_1$.
    \item A decoration of the facets by an element $\Dec(F)\in \bigotimes_{f\in F} \Dec(\labelfunction(f))$. 
    \item The labeling must satisfy the same conditions on the sets of facets $F$, internal seams $E$ and internal vertices $V$ as formulated for combinatorial closed $\labdata$-foams in Definition~\ref{def:labclosedfoam}.
\end{enumerate}
The set of combinatorial foams with boundary is denoted $\labboundaryFoamSet{\labdata}$.
\end{definition}

We now describe the boundaries of $\labdata$-foams from Definition~\ref{def:labboundaryfoam}.

\begin{definition}[Combinatorial closed $\labdata$-web]
\label{def:labclosedweb}
    Let $\labdata$ be as in Definition~\ref{def:labdata}. A (trivalent or nilvalent) \emph{combinatorial closed $\labdata$-web} consists of:
    \begin{itemize}
        \item A (trivalent or nilvalent) combinatorial closed web as in Definitions~\ref{def:combclosedweb} and \ref{def:combclosedweb2} with sets of vertices and oriented edges denoted by $V^\partial$ and $E^\partial$ respectively.
        \item A labeling $\labelfunction\colon E^\partial \to \labdata_1$ of all facets by elements of $\labdata_1$.
    \end{itemize}
    In the nilvalent case, this finishes the definition. In the trivalent case, the data is subject to the following requirement: given a vertex $v\in V^\partial$, we refine the cyclic order on $E(v)$ to a total order $(e_1,e_2,e_3)$ by requiring that $e_3$ is the edge with orientation incompatible with the others. Then, the labels of $e_1,e_2,e_3$ are required to satisfy:
    \[\labelfunction(e_1) + \labelfunction(e_2) = \labelfunction(e_3).\]
    The set of combinatorial foams with boundary is denoted $\labWebSet{\labdata}$.
\end{definition}

Combinatorial closed $\labdata$-webs arise as boundaries of the combinatorial $\labdata$-foams with boundary from Definition~\ref{def:labboundaryfoam}. Note that every boundary edge $e\in E^\partial$ inherits an induced label $\labelfunction(e)$ from the unique adjacent facet $f\in F(e)$. The map that associates to each $\labdata$-foam with boundary its boundary $\labdata$-web will be denoted:
\[
\partial\colon  \labboundaryFoamSet{\labdata} \to \labWebSet{\labdata}
\]

\begin{definition}
\label{def:labfoammorphism}
    Given $\labdata$-webs $W_1,W_2\in \labWebSet{\labdata}$, a $\labdata$-foam $\Sigma\colon W_1\to W_2$ is defined to be a foam $\Sigma\in \labboundaryFoamSet{\labdata}$ with boundary $\partial(\Sigma)= W_2 \sqcup W_1^r$. 
    The set of all $\labdata$-foam $\Sigma\colon W_1\to W_2$ with the same underlying data (1) and (2) in Definition~\ref{def:labboundaryfoam} is parametrized by the $\kb$-module of decorations on the facets (3). We let 
\[\HomPreFoamCat[\labdata](W_1,W_2)\] denote the $\kb$-module obtained as the coproduct of these $\kb$-modules as (1) and (2) range over all $\labdata$-foam $F\colon W_1\to W_2$.
\end{definition}

Note that we have canonical identifications:
\begin{equation}
\label{eq:absdual}
\begin{split}
\HomPreFoamCat[\labdata](W_1,W_2)
=
\HomPreFoamCat[\labdata](\emptyset,W_2\sqcup W_1^r)
\\=
\HomPreFoamCat[\labdata](W_1\sqcup W_2^r,\emptyset) 
=
\HomPreFoamCat[\labdata](W_2^r,W_1^r).
\end{split}
\end{equation}

\begin{remark}
    \label{rem:involutivetwo}
    In the case of involutive labeling data as in \Cref{def:involutive}, we obtain an involution on $\labboundaryFoamSet{\labdata}$ sending $\labdata$-labeled foams with boundary $W$ to such with boundary $W^r$. This involution again acts by reversing the underlying foam and acting by $\iota$ on all decorations. In particular, we obtain isomorphisms:
    \begin{equation}
\label{eq:prefoaminvo}
\begin{split}
\HomPreFoamCat[\labdata](W_1,W_2)
=
\HomPreFoamCat[\labdata](\emptyset,W_2\sqcup W_1^r)
\\\xrightarrow{-^r}
\HomPreFoamCat[\labdata](\emptyset,W_2^r\sqcup W_1) 
=
\HomPreFoamCat[\labdata](W_2,W_1).
\end{split}
\end{equation}

\end{remark}

\begin{construction}[Composition of $\labdata$-foams]
\label{constr:labfoamcomposition}
Let $W_1,\,W_2,\,W_3\in \labWebSet{\labdata}$ be $\labdata$-webs and consider two combinatorial (trivalent or nilvalent) $\labdata$-foams $\Sigma_1\colon W_1 \to W_2$ and $\Sigma_2\colon W_2 \to W_3$. We construct a $\labdata$-foam 
\[\Sigma_2\circ \Sigma_1\colon W_1 \to W_3\] as follows:
\begin{itemize}
    \item The underlying combinatorial (unlabeled) foam is given by the composition of the combinatorial foams underlying $\Sigma_1$ and $\Sigma_2$, as in Construction \ref{constr:foamcomposition}. Recall that the set of facets is given by $F=(F(\Sigma_1)\sqcup F(\Sigma_2))/\sim$.
    \item Each facet $[f]\in F$ inherits a label $\labelfunction([f])$ from the label of $f\in F(\Sigma_1)\sqcup F(\Sigma_2)$: indeed, if $f \sim f'$, then $\labelfunction(f)=\labelfunction(f')$, because they share a common boundary edge in the $\labdata$-web $W_2$.
    \item Let $\Dec(F(\Sigma_1))$ and $\Dec(F(\Sigma_2))$ be the decorations on $\Sigma_1$ and $\Sigma_2$ respectively. A decoration $\Dec(F)$ on $F$ is given by the image of $\Dec(F(\Sigma_1))\otimes\Dec(F(\Sigma_2))$ under the map 
    \[
       \hspace{1cm} \bigotimes_{f\in F(\Sigma_1)} \Dec(\labelfunction(f)) \otimes \bigotimes_{f\in F(\Sigma_2)} \Dec(\labelfunction(f)) \cong 
        \bigotimes_{[f]\in F} \bigotimes_{f'\in [f]} \Dec(\labelfunction([f])) \longrightarrow
        \bigotimes_{[f]\in F} \Dec(\labelfunction([f]))
    \]
    induced by the multiplication map $\bigotimes_{f'\in [f]} \Dec(\labelfunction([f])) \to \Dec(\labelfunction([f]))$.
\end{itemize}
It follows directly that $(\Sigma_2,\Sigma_1)\to \Sigma_2\circ \Sigma_1$ defines a $\kb$-bilinear map
\[
\circ\colon \HomPreFoamCat[\labdata](W_2,W_3) \times \HomPreFoamCat[\labdata](W_1,W_2) \to \HomPreFoamCat[\labdata](W_1,W_3)
\]

\end{construction}

\begin{definition} \label{def:idlabfoam}
    Given a (trivalent or nilvalent) combinatorial closed $\labdata$-web $W$, we define the \emph{identity foam on $W$} to be the (trivalent or nilvalent) $\labdata$-foam $\id_W\colon W \to W$ constructed as follows:
    \begin{itemize}
        \item The underlying combinatorial foam is given by the identity foam on $W$ from Definition~\ref{def:idcombfoam}. Recall that the set of facets of $\id_W$ is given by $F=\{f_e\}_{e\in E^{\partial}(W)}$.
        \item Each facet $f_e\in F$ inherits a label from $e\in E^\partial(W)$: $\labelfunction(f_e)=\labelfunction(e)$.
        \item The decoration on the facets is given by $\Dec(F)=1$.
    \end{itemize}
\end{definition}  

\begin{prop}
\label{prop:labfoamcat}
The following data defines a symmetric monoidal $\kb$-linear category, that we denote by $\PreFoamCat[\labdata]$ and call the \emph{category of closed $\labdata$-webs and $\labdata$-prefoams}: 
\begin{itemize}
   \item The objects set $\labWebSet{\labdata}$ is the set of combinatorial closed $\labdata$-webs from Definition~\ref{def:labclosedweb}.
   \item The $\kb$-module of morphisms $W_1\to W_2$ is formed by the $\labdata$-foams as in Definition~\ref{def:labfoammorphism}.
   \item The composition law for morphisms is given by Construction \ref{constr:labfoamcomposition}.
   \item For each $W\in \labWebSet{\labdata}$, the identity morphism $\id_W$ on $W$ is given in Definition~\ref{def:idlabfoam}.
   \item The symmetric monoidal structure is given on $\labdata$-webs by disjoint union and on foams by the $\kb$-bilinear extension of the disjoint union of underlying combinatorial foams to $\labdata$-foams. The empty web serves as tensor unit.
\end{itemize}
\end{prop}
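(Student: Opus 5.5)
The plan is to reduce everything to \Cref{prop:foamcat}, which already gives the symmetric monoidal category $\PreFoamCat$ of unlabeled combinatorial webs and foams. Labels and decorations are then treated as extra structure carried along by the constructions. There are three steps: check that composition is well defined on labeled foams, promote it to a $\kb$-bilinear operation with identities and associativity, and verify the symmetric monoidal axioms together with bilinearity of $\sqcup$.

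\emph{Well-definedness of composition.} For $\Sigma_1\colon W_1\to W_2$ and $\Sigma_2\colon W_2\to W_3$, the underlying foam $\Sigma_2\circ\Sigma_1$ exists by Construction~\ref{constr:foamcomposition}. Labels on the glued facets $[f]$ are well defined, as noted in Construction~\ref{constr:labfoamcomposition}. It remains to check the labeling conditions of Definition~\ref{def:labclosedfoam}, required by Definition~\ref{def:labboundaryfoam}(4).
\begin{itemize}
\item Old internal seams and vertices keep their adjacent facets up to the identification $f\mapsto[f]$, so their conditions persist.
\item A new arc or loop $[v]$ is a chain of seams $e_1,\dots,e_k$ through vertices of $W_2$. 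Each $e_i$ satisfies $\labelfunction(f_1)+\labelfunction(f_2)=\labelfunction(f_3)$ for its ordered facets. The cyclic orderings and the orientation-incompatible facet $f_3$ are matched across each $v_i\in V_2^\partial$ by the cyclic order on $E^\partial(v_i)$. Hence the glued facets around $[v]$ satisfy the same flow equation.
\item In the nilvalent case there is nothing to check.
\end{itemize}
The decoration map is a composite of a canonical isomorphism of tensor products and multiplication maps, so it is $\kb$-multilinear. Therefore composition extends $\kb$-bilinearly to the coproduct modules $\HomPreFoamCat[\labdata](W_1,W_2)$.

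\emph{Identities and associativity.} On underlying foams, unitality and associativity hold by \Cref{prop:foamcat}. It remains to compare labels and decorations.
\begin{itemize}
\item Labels agree automatically, since they are induced from facets.
\item For associativity, both bracketings produce the same set of glued facets. This set is the quotient of $F(\Sigma_1)\sqcup F(\Sigma_2)\sqcup F(\Sigma_3)$ by the equivalence relation generated by sharing edges in $W_2$ or $W_3$. In both bracketings the decoration on a glued facet is the product of all constituent decorations, and these agree by associativity and commutativity of $\Dec(a)$.
\item For unitality, $\id_W$ carries decoration $1$ on each facet $f_e$. Composing with $\id_W$ merges each facet of $\Sigma$ with some $f_e$'s and multiplies its decoration by $1$, so the decoration is unchanged.
\end{itemize}

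\emph{Symmetric monoidal structure.} Disjoint union of labeled foams is plainly compatible with labels, and decorations are tensored: $\Dec(F\sqcup F')=\Dec(F)\otimes\Dec(F')$. This makes $\sqcup$ $\kb$-bilinear and functorial, because composition acts facetwise on each summand. The associator, unitors and symmetry are the identity foams on the relevant reindexed webs. In the unlabeled case these were already shown to satisfy the coherence axioms in \Cref{prop:foamcat}. Since they carry decoration $1$, their naturality with respect to decorated foams follows from the same facet-by-facet argument as unitality.

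The main obstacle is the flow condition for the new seams $[v]$. One must track how the total order $(f_1,f_2,f_3)$ is transported through a boundary vertex. A vertex of $W_2$ appears in $\Sigma_1$ via $W_2$ and in $\Sigma_2$ via $W_2^r$. I would verify that passing to the reverse web, which swaps split and merge vertices, flips seam orientation and the induced orientations of the facets simultaneously, so that the incompatible facet $f_3$ is preserved.
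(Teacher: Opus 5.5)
Your proposal is correct and takes the same route as the paper. The paper's proof simply reduces to Proposition~\ref{prop:foamcat} together with unitality and associativity (and, implicitly, commutativity) of the decoration algebras; your extra check that the flow condition survives at the new seams $[v]$, using that $W_2^r$ swaps split and merge vertices while preserving cyclic orders and the orientation-incompatible facet, fills in a detail the paper leaves implicit.
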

\begin{proof} This follows from Proposition~\ref{prop:foamcat} and the unitality and associativity of algebras of decorations on the facets.
\end{proof}

\subsection{Universal construction} 
\label{sec:univ}
Fix a commutative ring $\kb$ and a $\kb$-linear labeling datum $\labdata$ as in Definition~\ref{def:labdata} and a closed foam evaluation $\eval$ as in Definition \ref{def:eval}, not yet assumed to be multiplicative. We will use the universal construction of \cite{BHMV} to impose relations on $\labdata$-foams.

For an object $W\in \PreFoamCat[\labdata]$ we use \eqref{eq:absdual} to define the \emph{trace}:
\[
\mathrm{tr}_W \colon \HomPreFoamCat[\labdata](W,W) =  
\HomPreFoamCat[\labdata](W\sqcup W^r,\emptyset) \xrightarrow{\id_W \circ - } \HomPreFoamCat[\labdata](\emptyset,\emptyset)
\]
where we have used \eqref{eq:absdual} again to consider $\id_W\in \HomPreFoamCat[\labdata](\emptyset,W\sqcup W^r)$. For another object $W'\in \PreFoamCat[\labdata]$ and $\Sigma\in \HomPreFoamCat[\labdata](W,W')$ and $\Sigma'\in \HomPreFoamCat[\labdata](W',W)$ we have
\[\tr_W(\Sigma'\circ \Sigma) = \tr_{W'}(\Sigma\circ \Sigma') \]

\begin{lemma}
\label{lem:foamideal}
    For objects $W,W'\in \PreFoamCat[\labdata]$, we consider the symmetric and $\kb$-linear pairing:
    \begin{gather*}
         \HomPreFoamCat[\labdata](W',W) \otimes \HomPreFoamCat[\labdata](W,W')  \longrightarrow \kb\\
         \Sigma' \otimes \Sigma \mapsto \langle\Sigma',\Sigma\rangle:=\eval(\mathrm{tr}_W(\Sigma'\circ \Sigma))
    \end{gather*}
    and the $\kb$-linear map:
    \begin{gather*}
         \HomPreFoamCat[\labdata](W',\emptyset)\otimes \HomPreFoamCat[\labdata](W,W')\otimes \HomPreFoamCat[\labdata](\emptyset,W) \longrightarrow \kb\\
         \Sigma'' \otimes \Sigma \otimes \Sigma' \mapsto \langle\Sigma'',\Sigma,\Sigma'\rangle:=\eval(\Sigma'' \circ \Sigma\circ \Sigma')
    \end{gather*}
    Then the submodules
    \begin{equation}
    \label{eq:foamideal}
    I_1(W,W'):= \bigcap_{\Sigma'} \ker\, \langle\Sigma',-\rangle \subset \HomPreFoamCat[\labdata](W,W') 
    \end{equation}
    and
    \begin{equation}
    \label{eq:foamidealtwo}
    I_2(W,W'):= \bigcap_{\Sigma',\Sigma''} \ker\, \langle\Sigma'',-,\Sigma'\rangle \subset \HomPreFoamCat[\labdata](W,W') 
    \end{equation}
    satisfy $I_1(W,W')\subset I_2(W,W')$ and both families define ideals of morphisms in $\PreFoamCat[\labdata]$.
\end{lemma}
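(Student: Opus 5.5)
The plan is to reduce everything to two structural facts about $\PreFoamCat[\labdata]$: the cyclicity of the trace, $\tr_W(\Sigma'\circ\Sigma)=\tr_{W'}(\Sigma\circ\Sigma')$, already recorded above, and the identifications \eqref{eq:absdual}, which let us bend boundary webs. Note first that symmetry of the pairing $\langle-,-\rangle$ is immediate from cyclicity of the trace, and $\kb$-linearity from bilinearity of composition (Construction~\ref{constr:labfoamcomposition}) together with linearity of $\eval$.

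\emph{Inclusion $I_1\subset I_2$.} Given $\Sigma'\in\HomPreFoamCat[\labdata](\emptyset,W)$ and $\Sigma''\in\HomPreFoamCat[\labdata](W',\emptyset)$, form the composite $\Theta:=\Sigma'\circ\Sigma''\in\HomPreFoamCat[\labdata](W',W)$. Here $\Theta$ is just the disjoint union $\Sigma'\sqcup\Sigma''$ regarded as a foam $W'\to W$, since composing through the empty web glues nothing. The key identity to verify is
\[
\eval(\tr_W(\Theta\circ\Sigma))=\eval(\Sigma''\circ\Sigma\circ\Sigma').
\]
I would check this at the level of underlying combinatorial foams, using Construction~\ref{constr:foamcomposition}. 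Closing up $\Theta\circ\Sigma$ with $\id_W$ glues the $W$-boundary of $\Sigma$ to that of $\Sigma''$ via $\Sigma'$. Gluing with an identity foam is neutral by Proposition~\ref{prop:foamcat}. Hence both sides are the same closed foam, obtained by gluing $\Sigma'$, $\Sigma$ and $\Sigma''$ along $W$ and $W'$, with the same product decoration. So $\langle\Theta,\Sigma\rangle=\langle\Sigma'',\Sigma,\Sigma'\rangle$, and any $\Sigma$ in the radical $I_1$ lies in $I_2$.

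\emph{Ideal property.} For $\Sigma\in I_1(W,W')$ and $\Phi\colon W'\to W''$, we need $\Phi\circ\Sigma\in I_1(W,W'')$. For any test foam $\Psi\colon W''\to W$, associativity gives
\[
\langle\Psi,\Phi\circ\Sigma\rangle=\eval(\tr_W(\Psi\circ\Phi\circ\Sigma))=\langle\Psi\circ\Phi,\Sigma\rangle=0.
\]
Precomposition with $\Phi\colon W'''\to W$ is handled by cyclicity:
\[
\langle\Psi,\Sigma\circ\Phi\rangle=\eval(\tr_{W'''}(\Psi\circ\Sigma\circ\Phi))=\eval(\tr_W(\Phi\circ\Psi\circ\Sigma))=\langle\Phi\circ\Psi,\Sigma\rangle=0.
\]
For $I_2$ the argument is direct and does not need the trace:
\[
\langle\Sigma'',\Phi\circ\Sigma,\Sigma'\rangle=\langle\Sigma''\circ\Phi,\Sigma,\Sigma'\rangle,\qquad
\langle\Sigma'',\Sigma\circ\Phi,\Sigma'\rangle=\langle\Sigma'',\Sigma,\Phi\circ\Sigma'\rangle.
\]
Both submodules are intersections of kernels of linear maps, hence $\kb$-submodules. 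That completes the ideal property.

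If ``ideal'' is meant in the monoidal sense, one must also check closure under $-\sqcup\id_U$. For $I_2$, I would bend $U$ using \eqref{eq:absdual} so that a test pair for $\Sigma\sqcup\id_U$ becomes a test pair for $\Sigma$. For $I_1$, I would reduce via the trace to pairings against foams $W'\sqcup U\to W\sqcup U$ and close up the $U$-strands, which again yields a test foam for $\Sigma$.

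\emph{Main obstacle.} The hardest step is the cyclicity and bending identities at the combinatorial level. We need that $\tr$ of a composite, and gluing through identity foams, produce literally the same combinatorial closed foam, including genus bookkeeping, boundary words and cyclic orders. I would argue this by showing that both sides arise from the same single iterated gluing in Construction~\ref{constr:foamcomposition}, independent of the order of the (b1)/(b2) steps, which the construction leaves to the reader. I would then invoke that independence; the genus formula is consistent with the Euler characteristic count of Remark~\ref{rem:Euler}.
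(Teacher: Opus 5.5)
Your proof is correct and is essentially the paper's: the inclusion is the identity $\langle\Sigma'',-,\Sigma'\rangle=\langle\Sigma'\circ\Sigma'',-\rangle$, and the ideal property follows from associativity plus cyclicity for $I_1$ and from associativity alone for $I_2$. That identity already follows from cyclicity of the trace together with $\mathrm{tr}_\emptyset$ being the identity, so the separate combinatorial gluing check you propose is not needed.

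Your monoidal aside is not part of this statement; the paper treats monoidality only in Lemma~\ref{lem:strong}, under extra hypotheses. Moreover, its $I_2$ half does not work as described. For $\Sigma\colon W\to W'$, bending a test pair $(\tau'',\tau)$ for $\Sigma\sqcup\id_U$ gives $\langle\hat\tau\circ\hat\tau'',\Sigma\rangle$ with $\hat\tau''\colon W'\to U^r$ and $\hat\tau\colon U^r\to W$. This is an $I_1$-type test that need not factor through $\emptyset$, not an $I_2$-type test. Indeed, taking $U=W^r$ shows that closure of $I_2$ under $-\sqcup\id_U$ would already force $I_2\subset I_1$, which the paper only obtains under the hypotheses of Lemma~\ref{lem:strong}.
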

\begin{proof}
For the first assertion, note that $\langle\Sigma'',-,\Sigma'\rangle = \langle\Sigma'\circ \Sigma'',-\rangle$ and the indexing set of the intersection in \eqref{eq:foamidealtwo} corresponds to the subset of the indexing set of the intersection in \eqref{eq:foamideal} formed by those $\Sigma'$ that factor through $\emptyset$.

\noindent To check that $I_1$ forms an ideal, we consider morphisms $W'\xrightarrow{ \Phi' } V \xrightarrow{ \Phi } W \xrightarrow{ \Sigma } W' \xrightarrow{ \Psi } V' \xrightarrow{ \Psi'' }W$
in $\PreFoamCat[\labdata]$ and find
\begin{align*}
\Sigma \in \ker\, \langle\Psi''\circ \Psi,-\rangle 
&\implies 
\Psi\circ \Sigma \in \ker\, \langle\Psi'',-\rangle \\
\Sigma \in \ker\, \langle\Phi\circ \Phi' ,-\rangle 
&\implies 
\Sigma\circ \Phi \in \ker\, \langle\Phi',-\rangle
\end{align*}
where we have used cyclicity of the trace twice in the second line.
Since $\Psi''\circ \Psi$ and $\Phi\circ \Phi'$ appear in the indexing of the intersection in \eqref{eq:foamideal}, we have that $\Sigma \in I_1(W,W')$ implies $\Psi\circ \Sigma \in I_1(W,V')$ and $\Sigma\circ \Phi \in I_1(V,W')$.

For $I_2$ we instead consider morphisms $\emptyset \xrightarrow{ \Phi' } V \xrightarrow{ \Phi } W \xrightarrow{ \Sigma } W' \xrightarrow{ \Psi } V' \xrightarrow{ \Psi'' } \emptyset$
in $\PreFoamCat[\labdata]$ and find:
\begin{align*}
\Sigma \in \ker\, \langle\Psi''\circ \Psi,-,\Sigma'\rangle 
&\implies 
\Psi\circ \Sigma \in \ker\, \langle\Psi'',-,\Sigma'\rangle \\
\Sigma \in \ker\, \langle\Sigma'',-,\Phi\circ \Phi'\rangle 
&\implies 
\Sigma\circ \Phi \in \ker\, \langle\Sigma'',-,\Phi'\rangle
\end{align*}
Since $\Psi''\circ \Psi$ and $\Phi\circ \Phi'$ appear in the indexing of the intersection in \eqref{eq:foamidealtwo}, we have that $\Sigma \in I_2(W,W')$ implies $\Psi\circ \Sigma \in I_2(W,V')$ and $\Sigma\circ \Phi \in I_2(V,W')$.
\end{proof}

\begin{lemma}
\label{lem:homfromempty}
    For any $W\in \PreFoamCat[\labdata]$, we have $I_1(\emptyset,W)=I_2(\emptyset,W)$.
\end{lemma}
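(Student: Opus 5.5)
By Lemma~\ref{lem:foamideal} we already have $I_1(\emptyset,W)\subset I_2(\emptyset,W)$, so only the reverse inclusion $I_2(\emptyset,W)\subset I_1(\emptyset,W)$ needs proof. The plan is to show that, when the source web is empty, the two families of test functionals in \eqref{eq:foamideal} and \eqref{eq:foamidealtwo} coincide.

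In $I_1(\emptyset,W)$ the test elements are $\Sigma'\in\HomPreFoamCat[\labdata](W,\emptyset)$, and the functional is $\langle\Sigma',\Sigma\rangle=\eval(\mathrm{tr}_\emptyset(\Sigma'\circ\Sigma))$. In $I_2(\emptyset,W)$ the test elements are pairs $\Sigma''\in\HomPreFoamCat[\labdata](W,\emptyset)$ and $\Sigma'\in\HomPreFoamCat[\labdata](\emptyset,\emptyset)$, and the functional is $\langle\Sigma'',\Sigma,\Sigma'\rangle=\eval(\Sigma''\circ\Sigma\circ\Sigma')$.

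The first step is to check that $\mathrm{tr}_\emptyset$ is the identity map on $\HomPreFoamCat[\labdata](\emptyset,\emptyset)$. This holds because $\id_\emptyset$ is the empty foam, since Definition~\ref{def:idcombfoam} produces no vertices, seams or facets for the empty web. Composing with the empty foam via Construction~\ref{constr:foamcomposition} changes nothing, because there are no glued boundary edges. Hence $\langle\Sigma',\Sigma\rangle=\eval(\Sigma'\circ\Sigma)$.

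The second step is to take $\Sigma'=\id_\emptyset$, the empty closed foam, inside the $I_2$ family. This gives
\[
\langle\Sigma'',\Sigma,\id_\emptyset\rangle=\eval(\Sigma''\circ\Sigma),
\]
which is exactly the $I_1$ functional indexed by $\Sigma''$. Therefore every $I_1$ test functional already appears among the $I_2$ test functionals. An element of $I_2(\emptyset,W)$ is killed by all $I_2$ functionals, so in particular by these, and hence lies in $I_1(\emptyset,W)$. This gives the reverse inclusion.

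I expect no real obstacle here. The only point that needs care is the bookkeeping in the first step: confirming that the identifications \eqref{eq:absdual} used to define $\mathrm{tr}_\emptyset$ are literal equalities when $W=\emptyset$, so that no hidden relabeling enters.
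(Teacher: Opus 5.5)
Your proposal is correct and follows the paper's proof: the paper also takes the closed foam $C=\emptyset$ in the $I_2$ test family, so that $\langle\Sigma',\Sigma\rangle=\langle\Sigma',\Sigma,\emptyset\rangle$, and cites Lemma~\ref{lem:foamideal} for the other inclusion. Your explicit check that $\mathrm{tr}_\emptyset$ is the identity, because $\id_\emptyset$ is the empty foam, is a step the paper leaves implicit.
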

\begin{proof}
For any morphism $\Sigma\in I_2(\emptyset,W)$, we have $\eval(\Sigma'\circ \Sigma \circ C)=0$ for all $\Sigma'\in \PreFoamCat[\labdata](W, \emptyset)$ and $C\in \PreFoamCat[\labdata](\emptyset, \emptyset)$. This includes the case $C=\emptyset$, so that we have
\[
\langle \Sigma', \Sigma\rangle = \langle \Sigma', \Sigma, \emptyset\rangle =0 
\]
which implies $\Sigma\in I_1(\emptyset,W)$. The reverse inclusion was shown in \Cref{lem:foamideal}.
\end{proof}

\begin{definition} We introduce the notation  
$\PreFoamCat[\labdata] \xrightarrow{\pi} \FoamCata[\labdata] \xrightarrow{\pi} \FoamCatb[\labdata]$
 for the $\kb$-linear quotient functors associated with the ideals from \Cref{lem:foamideal} and their codomains. Further notation concerning representable functors $\webeval^i=\Hom(\emptyset,-)$ is defined in terms of the following diagram of $\kb$-linear functors and natural transformations:

\[\begin{tikzcd}
	{\PreFoamCat[\labdata]} \\
	{\FoamCata[\labdata]} &&&& {\kb\Mod} \\
	{\FoamCatb[\labdata]}
	\arrow["\pi"', two heads, from=1-1, to=2-1]
	\arrow[""{name=0, anchor=center, inner sep=0}, curve={height=6pt}, from=1-1, to=2-5]
	\arrow[""{name=1, anchor=center, inner sep=0}, "{\Hom(\emptyset,-)}", curve={height=-18pt}, from=1-1, to=2-5]
	\arrow[""{name=2, anchor=center, inner sep=0}, "{\webeval^1}"'{pos=0.7}, from=2-1, to=2-5]
	\arrow[""{name=3, anchor=center, inner sep=0}, curve={height=24pt}, from=2-1, to=2-5]
	\arrow["\pi"', two heads, from=2-1, to=3-1]
	\arrow["{\webeval^2}"'{pos=0.7}, curve={height=18pt}, from=3-1, to=2-5]
	\arrow[shorten <=4pt, shorten >=4pt, Rightarrow, from=1, to=0]
	\arrow[shorten <=4pt, shorten >=4pt, Rightarrow, from=2, to=3]
\end{tikzcd}\]
Here the unlabeled arrows are defined by commutativity of the unlabeled triangles. Additionally, we define the $\kb$-linear functors:
\[\unicon^1, \unicon^2 \colon \PreFoamCat[\labdata]\to \kb\Mod,\quad \unicon^1:=\webeval^1\circ \pi,\quad \unicon^2:=\webeval^2\circ \pi\circ \pi\]
\end{definition}

\begin{corollary}
We have canonical natural isomorphisms of $\kb$-linear functors \[
\webeval^1\xRightarrow{\cong} \webeval^2 \circ \pi \qquad \text{and} \qquad \unicon^1\xRightarrow{\cong} \unicon^2.\]
\end{corollary}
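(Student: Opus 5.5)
The two isomorphisms both come down to the identity $I_1(\emptyset,W)=I_2(\emptyset,W)$ from \Cref{lem:homfromempty}, so I would prove the first and deduce the second.

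\textbf{The first isomorphism.} For $W\in\FoamCata[\labdata]$ we have
\[
\webeval^1(W)=\HomPreFoamCat[\labdata](\emptyset,W)/I_1(\emptyset,W).
\]
The morphisms of $\FoamCatb[\labdata]$ are obtained from $\PreFoamCat[\labdata]$ by quotienting by $I_2$. Since $I_1\subset I_2$ by \Cref{lem:foamideal}, quotienting $\FoamCata[\labdata]$ by the image $I_2/I_1$ gives the same category, so
\[
\webeval^2(\pi W)=\HomPreFoamCat[\labdata](\emptyset,W)/I_2(\emptyset,W).
\]
The component of the natural transformation is the map induced by $\pi$ on representables,
\[
\HomPreFoamCat[\labdata](\emptyset,W)/I_1(\emptyset,W)\longrightarrow \HomPreFoamCat[\labdata](\emptyset,W)/I_2(\emptyset,W),
\]
which is the canonical surjection. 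Its kernel is $I_2(\emptyset,W)/I_1(\emptyset,W)$, and this vanishes by \Cref{lem:homfromempty}, so the component is an isomorphism.

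\textbf{Naturality.} For a morphism $[\Sigma]\colon W\to W'$ in $\FoamCata[\labdata]$, both functors act by postcomposition with a representative $\Sigma$. The canonical projection commutes with postcomposition, since $\pi$ is a functor. Hence the components assemble into a natural isomorphism $\webeval^1\Rightarrow\webeval^2\circ\pi$.

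\textbf{The second isomorphism.} Whiskering this natural isomorphism with $\pi\colon\PreFoamCat[\labdata]\to\FoamCata[\labdata]$ gives
\[
\unicon^1=\webeval^1\circ\pi\;\xRightarrow{\cong}\;\webeval^2\circ\pi\circ\pi=\unicon^2.
\]

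The only point needing care is identifying $I_2$ correctly as an ideal on $\FoamCata[\labdata]$ (namely as $I_2/I_1$), so that the hom-modules of the iterated quotient really are $\HomPreFoamCat[\labdata]/I_2$. This is routine given $I_1\subset I_2$, and there is no real obstacle.
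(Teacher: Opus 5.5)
Your proposal is correct and follows the paper's route: the paper says only that the corollary is a direct consequence of \Cref{lem:homfromempty}. Your argument spells this out faithfully, identifying each component as the canonical surjection between the quotients of the same module by $I_1(\emptyset,W)\subset I_2(\emptyset,W)$ with vanishing kernel, getting naturality from functoriality of $\pi$, and obtaining the second isomorphism by whiskering with $\pi$.
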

\begin{proof}
    This is a direct consequence of \Cref{lem:homfromempty}.
\end{proof}
In the following, we may write $\unicon:=\unicon^1=\unicon^2$ and $I(\emptyset,W):=I_1(\emptyset,W)=I_2(\emptyset,W)$.

\begin{lemma}
\label{lem:faithful} The $\kb$-linear functor $\webeval^2$ is faithful.
\end{lemma}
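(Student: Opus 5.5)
We need to show: if a morphism $\Sigma\in \FoamCatb[\labdata](W,W')$ induces the zero map $\webeval^2(\Sigma)\colon \FoamCatb[\labdata](\emptyset,W)\to \FoamCatb[\labdata](\emptyset,W')$, then $\Sigma=0$. The plan is to lift to $\PreFoamCat[\labdata]$ and show that a representative lies in $I_2(W,W')$.

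First, choose a representative $\tilde\Sigma\in \HomPreFoamCat[\labdata](W,W')$ of $\Sigma$. Composing with an arbitrary $\Sigma'\in \HomPreFoamCat[\labdata](\emptyset,W)$ gives $\webeval^2(\Sigma)(\pi\pi\Sigma')=\pi\pi(\tilde\Sigma\circ\Sigma')$. Here we use that $\pi\circ\pi$ is a functor and that the image of the prefoams from $\emptyset$ to $W$ spans the hom-module. By hypothesis this class is zero, i.e.\ $\tilde\Sigma\circ\Sigma'\in I_2(\emptyset,W')$.

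Next, unwind the definition of $I_2$ in \eqref{eq:foamidealtwo}. Membership of $\tilde\Sigma\circ\Sigma'$ in $I_2(\emptyset,W')$ means $\eval(\Sigma''\circ\tilde\Sigma\circ\Sigma'\circ C)=0$ for all $\Sigma''\in\HomPreFoamCat[\labdata](W',\emptyset)$ and all $C\in\HomPreFoamCat[\labdata](\emptyset,\emptyset)$. Specialize to $C=\emptyset$, the identity of the empty web. This gives $\langle\Sigma'',\tilde\Sigma,\Sigma'\rangle=0$ for all $\Sigma''$ and $\Sigma'$. Since these $\Sigma',\Sigma''$ range over exactly the indexing set of the intersection in \eqref{eq:foamidealtwo}, we get $\tilde\Sigma\in I_2(W,W')$.

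It then remains to check that $\tilde\Sigma\in I_2(W,W')$ forces $\Sigma=0$ in $\FoamCatb[\labdata]$. Since $I_1\subset I_2$, the quotient $\FoamCatb[\labdata]$ of $\FoamCata[\labdata]$ is just $\PreFoamCat[\labdata]/I_2$, so the class of $\tilde\Sigma$ vanishes. The only real bookkeeping step is this identification of the composite quotient with $\PreFoamCat[\labdata]/I_2$. It requires only that the image of $I_2$ in $\FoamCata[\labdata]$ is the ideal defining the second $\pi$, which holds by definition.
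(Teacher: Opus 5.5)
Your proof is correct and follows the paper's argument. You lift to a prefoam $\tilde\Sigma$, observe that $\tilde\Sigma\circ\Sigma'\in I_2(\emptyset,W')$ for every $\Sigma'$, and conclude $\tilde\Sigma\in I_2(W,W')$. The only cosmetic difference is that you specialize $C=\emptyset$ directly, where the paper cites $I_1(\emptyset,W')=I_2(\emptyset,W')$ from Lemma~\ref{lem:homfromempty}; your specialization is exactly that lemma's proof.
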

\begin{proof}  Let $\underline{\Sigma}\in \FoamCatb[\labdata](W,W')$ be such that $\webeval^2(\underline{\Sigma})=0$. This means the induced map:
\[
\FoamCatb[\labdata](\emptyset,W) \xrightarrow{\underline{\Sigma}\circ - } \FoamCatb[\labdata](\emptyset,W')
\]
is zero. Let $\Sigma\in \HomPreFoamCat[\labdata](W,W')$ be such that $\pi\circ\pi(\Sigma)=\underline{\Sigma}$. This means for every $\Sigma'\in \HomPreFoamCat[\labdata](\emptyset,W)$ we have:
\[
 \Sigma \circ \Sigma' \in I_2(\emptyset, W')=I_1(\emptyset, W')
\]
Then, we must have for every $\Sigma''\in  \HomPreFoamCat[\labdata](W',\emptyset)$
\[
\eval(\Sigma'' \circ \Sigma \circ \Sigma') = 0 
\]
and hence $\Sigma \in I_2(W,W')$, which implies $\underline{\Sigma}=\pi\circ\pi(\Sigma)=0$ in $\FoamCatb[\labdata](W,W')$.
\end{proof}

\begin{lemma}
\label{lem:endempty} The closed foam evaluation $\eval$ factors as:
    \[
    \begin{tikzcd}
	{\PreFoamCat[\labdata]}(\emptyset, \emptyset) \\
	{\FoamCata[\labdata]}(\emptyset, \emptyset)  &&& {\kb} \\
	{\FoamCatb[\labdata](\emptyset, \emptyset)}
	\arrow[""{name=0, anchor=center, inner sep=0}, "{{\eval}}", curve={height=-18pt}, from=1-1, to=2-4]
	\arrow["\pi"', two heads, from=1-1, to=2-1]
        \arrow[double, no head, from=2-1, to=3-1]
        \arrow["{{\eval'}}"', hook, from=2-1, to=2-4]
	\arrow["{{\eval'}}"',hook, curve={height=18pt}, from=3-1, to=2-4]
\end{tikzcd}
\] 
If $\eval$ is multiplicative in the sense of \Cref{def:eval}, then the following hold:
\begin{enumerate}
    \item The map $\eval'$ is a $\kb$-algebra isomorphism.
    \item The functor $\unicon$ is laxly symmetric monoidal with respect to disjoint union.
\end{enumerate}
\end{lemma}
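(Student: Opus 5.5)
We first establish the factorization. By definition $I_1(\emptyset,\emptyset)$ is the intersection of the kernels of $\langle \Sigma',-\rangle$ over $\Sigma'\in\HomPreFoamCat[\labdata](\emptyset,\emptyset)$. The trace on $\End(\emptyset)$ is the identity, since $\id_\emptyset$ is the empty foam. Hence $\langle\Sigma',\Sigma\rangle=\eval(\Sigma'\circ\Sigma)=\eval(\Sigma'\sqcup\Sigma)$, because composition over the empty web is disjoint union. Taking $\Sigma'=\emptyset$ shows that $I_1(\emptyset,\emptyset)\subset\ker\eval$, so $\eval$ descends to a map $\eval'$ on $\FoamCata[\labdata](\emptyset,\emptyset)$.

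For injectivity we must show the reverse inclusion $\ker\eval\subset I_1(\emptyset,\emptyset)$, which requires $\eval(\Sigma'\sqcup\Sigma)=0$ whenever $\eval(\Sigma)=0$. Without multiplicativity I would argue instead via the maximal quotient: replace $\eval'$ by the map induced on the quotient by $I$. The quotient $\FoamCata[\labdata](\emptyset,\emptyset)=\PreFoamCat[\labdata](\emptyset,\emptyset)/I_1$ injects into $\Hom_\kb(\PreFoamCat[\labdata](\emptyset,\emptyset),\kb)$, and evaluation at $\emptyset$ recovers $\eval$. Since the diagram is stated to hold before multiplicativity, I expect the intended reading is that injectivity of $\eval'$ is a consequence of the following argument. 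Whenever $\eval$ is multiplicative, $\eval(\Sigma'\sqcup\Sigma)=\eval(\Sigma')\eval(\Sigma)$, so $\ker\eval=I_1(\emptyset,\emptyset)$ exactly and $\eval'$ is injective. The identification with $\FoamCatb$ comes from Lemma~\ref{lem:homfromempty} applied to $W=\emptyset$.

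For (1), the algebra structure on $\End(\emptyset)$ is composition, which equals disjoint union, with unit the empty foam. Multiplicativity says that $\eval$ is a $\kb$-algebra map: $\eval(\emptyset)=1$ and $\eval(\Sigma_1\sqcup\Sigma_2)=\eval(\Sigma_1)\eval(\Sigma_2)$. The map $\eval'$ is injective by the above. It is surjective because $\eval'(\lambda\cdot\emptyset)=\lambda$, using $\kb$-linearity. Hence $\eval'$ is an isomorphism of $\kb$-algebras.

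For (2), I would define the lax structure maps
\[\unicon(W)\otimes_\kb\unicon(W')\to\unicon(W\sqcup W'),\qquad [\Sigma]\otimes[\Sigma']\mapsto[\Sigma\sqcup\Sigma'],\]
together with the unit map $\kb\to\unicon(\emptyset)$, $1\mapsto[\emptyset]$.

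1. The main check is well-definedness: if $\Sigma\in I(\emptyset,W)$, then $\Sigma\sqcup\Sigma'\in I(\emptyset,W\sqcup W')$. I would use the $I_2$ description. A closed foam $\Psi\circ(\Sigma\sqcup\Sigma')$ with $\Psi\colon W\sqcup W'\to\emptyset$ can be rewritten as $\Psi''\circ\Sigma$, where $\Psi''=\Psi\circ(\id_W\sqcup\Sigma')\colon W\to\emptyset$. This uses only functoriality of $\sqcup$ in $\PreFoamCat[\labdata]$, that is, the interchange law of Proposition~\ref{prop:labfoamcat}. Its evaluation therefore vanishes, and symmetrically in the second factor.
2. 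Naturality in $W$ and $W'$ follows from the same interchange law.
3. Associativity, unitality and symmetry are inherited from the symmetric monoidal structure on $\PreFoamCat[\labdata]$. Unitality uses $[\emptyset]$ together with $\Sigma\sqcup\emptyset=\Sigma$.

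I expect the well-definedness in step 1 to be the main obstacle. Notably, multiplicativity is not actually needed for (2) beyond the unit conventions, and I would remark on this.
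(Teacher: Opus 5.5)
Your proof is correct, and it differs from the paper's in two places.

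\textbf{Injectivity of $\eval'$.} The paper simply cites Lemma~\ref{lem:faithful}. You instead reduce injectivity to $\ker\eval\subset I_1(\emptyset,\emptyset)$, i.e.\ to $\eval(\Sigma'\sqcup\Sigma)=0$ for all closed $\Sigma'$ whenever $\eval(\Sigma)=0$, and you obtain this from multiplicativity. Your doubt about the unconditional hooked arrows is well founded. Faithfulness of $\webeval^2$ does not yield injectivity, and injectivity is actually false without multiplicativity. You can therefore drop the ``maximal quotient'' detour, which never gives injectivity into $\kb$. For a counterexample, take the nilvalent case with $\labdata_1=\{*\}$ and $\Dec(*)=\kb$. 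Let $\eval$ be $1$ on the empty foam and on closed foams with exactly two facets, and $0$ on all other foams. A single sphere $S$ lies in $\ker\eval$, yet $\langle S,S\rangle=\eval(S\sqcup S)=1$, so $S\notin I_1(\emptyset,\emptyset)$. In fact, given $\eval(\emptyset)=1$, injectivity of $\eval'$ is equivalent to multiplicativity. So the diagram's injectivity claim holds only under multiplicativity, which is exactly what your argument uses.

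\textbf{Well-definedness in (2).} The paper bends boundaries via \eqref{eq:absdual}: it reads $X\colon W\sqcup V\to\emptyset$ as a foam $W\to V^r$ and $\tau$ as a foam $V^r\to\emptyset$, so that $X\circ(\Sigma\sqcup\tau)=\tau\circ X\circ\Sigma$. You instead write $\Psi\circ(\Sigma\sqcup\Sigma')=\bigl(\Psi\circ(\id_W\sqcup\Sigma')\bigr)\circ\Sigma$. This uses only bifunctoriality of $\sqcup$ from Proposition~\ref{prop:labfoamcat}, and it avoids checking that gluing is compatible with the boundary-bending identifications.

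\textbf{Minor remarks.} The vanishing condition you actually verify is the $I_1$ description, since the trace on $\emptyset$ is trivial; this suffices by Lemma~\ref{lem:homfromempty}. You are right that (2) does not use multiplicativity, and the paper's argument does not use it either. Your explicit unit map $\kb\to\unicon(\emptyset)$ fills in something the paper leaves implicit.
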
 
\begin{proof}
For any endomorphism $C\in I(\emptyset,\emptyset)$ and $C'\in \PreFoamCat[\labdata](\emptyset, \emptyset)$, the definition of $I_1$ implies $\eval(C'\circ C)=0$. This includes the case $C'=\emptyset$, which implies $I(\emptyset,\emptyset)  \subset \ker(\eval)$. Thus, the induced evaluation $\eval'$ is well-defined. It is injective by \Cref{lem:faithful}. 

(1) If $\eval$ satisfies $\eval(\emptyset)=1\in \kb$ as required in \Cref{def:eval}.(1), then $1\mapsto \emptyset$ defines a section $\kb \to \PreFoamCat[\labdata](\emptyset, \emptyset)$ of $\eval$, showing that $\eval$ and $\eval'$ are surjective. 
\Cref{def:eval}.(2) then expresses the required compatibility with the algebra multiplication. 

(2) For objects $W,W'\in \PreFoamCat[\labdata]$ consider the tensor product
\begin{equation}
\label{eq:tensfromempty}
\PreFoamCat[\labdata](\emptyset, W) \otimes \PreFoamCat[\labdata](\emptyset,V) \to \PreFoamCat[\labdata](\emptyset,W\sqcup V)
\end{equation}
induced by disjoint union of foams and denote it $\sqcup$. Let $\Sigma\in I(\emptyset,W)\subset \PreFoamCat[\labdata](\emptyset,W)$ and $\tau\in \PreFoamCat[\labdata](\emptyset,V) = \PreFoamCat[\labdata](V^r,\emptyset)$.

Let $X\in \PreFoamCat[\labdata](W\sqcup V,\emptyset)= \PreFoamCat[\labdata](W,V^r)$ be arbitrary. Then
\begin{align*}
\langle X, \Sigma \sqcup \tau\rangle 
&= \eval(\mathrm{tr}_\emptyset (X\circ (\Sigma \sqcup \tau))) \\
& = \eval(X\circ (\Sigma \sqcup \tau) 
\\
&= \eval(\tau \circ X \circ \Sigma) 
\\
&=\eval(\mathrm{tr}_\emptyset (\tau \circ X \circ \Sigma))
=\langle \tau \circ X, \Sigma \rangle =0
\end{align*}
which shows $\Sigma \sqcup \tau\in I_1(\emptyset,W\sqcup V)=I(\emptyset,W\sqcup V)$. Thus, the tensor product \eqref{eq:tensfromempty} descends to the quotient, written in three equivalent ways as:
\begin{align} \label{eq:tensorprodFoam}
\FoamCata[\labdata](\emptyset, W) \otimes \FoamCata[\labdata](\emptyset,V) &\to \FoamCata[\labdata](\emptyset,W\sqcup V)\\
\nonumber \FoamCatb[\labdata](\emptyset, W) \otimes \FoamCatb[\labdata](\emptyset,V) &\to \FoamCatb[\labdata](\emptyset,W\sqcup V)\\
\nonumber\unicon(W) \otimes \unicon(V) &\to \unicon(V\sqcup W).
\end{align}
These maps are natural in both arguments and hence equip the functor $\unicon$ with a lax monoidal structure. Furthermore, the functor $\unicon$ is clearly compatible with the symmetric braidings on $\PreFoamCat[\labdata]$ and $\kb\Mod$ given by swapping components.
\end{proof}

\begin{lemma}
\label{lem:strong}
    If $\eval$ is multiplicative in the sense of \Cref{def:eval} and if, furthermore, $\unicon$ is a strong symmetric monoidal functor, then:
\begin{enumerate}
\item For every $W,W'\in \FoamCata[\labdata]$, the map
\[
\circ\colon \FoamCata[\labdata](\emptyset,W')\otimes \FoamCata[\labdata](W,\emptyset) \to \FoamCata[\labdata](W,W')
\]
is surjective, $I_1(W,W')=I_2(W,W')=I(W,W')$ and so the quotient functor witnesses a canonical isomorphism $\pi\colon \FoamCata[\labdata] \to \FoamCatb[\labdata]$ of $\kb$-linear categories. 
\newline
In this case we write $\FoamCat[\labdata]:=\FoamCata[\labdata]=\FoamCatb[\labdata]$ and $\webeval:=\webeval^1=\webeval^2$. 
\item The ideal of morphisms $I(W,W')$ in $\PreFoamCat[\labdata]$ is monoidal, so that the monoidal structure descends to the quotient $\FoamCat[\labdata]$ and the functor $\webeval$ becomes strongly symmetric monoidal.
\end{enumerate}
\end{lemma}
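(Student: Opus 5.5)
The plan is to exploit the identification \eqref{eq:absdual}, $\PreFoamCat[\labdata](W,W')=\PreFoamCat[\labdata](\emptyset,W'\sqcup W^r)$, together with strong monoidality of $\unicon$.

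\textbf{Surjectivity in (1).} Strong monoidality says that the map \eqref{eq:tensorprodFoam}
\[
\unicon(W')\otimes\unicon(W^r)\to\unicon(W'\sqcup W^r)
\]
is an isomorphism, hence surjective. Now read $\unicon(W'\sqcup W^r)=\FoamCata[\labdata](\emptyset,W'\sqcup W^r)$. Its source is $\FoamCata[\labdata](\emptyset,W')\otimes\FoamCata[\labdata](\emptyset,W^r)$, and $\FoamCata[\labdata](\emptyset,W^r)=\FoamCata[\labdata](W,\emptyset)$. Under these identifications, disjoint union of foams $\emptyset\to W'$ and $W\to\emptyset$ is exactly the composite $\Sigma'\circ\Sigma''\colon W\to\emptyset\to W'$. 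This needs a small check that composition through $\emptyset$ agrees with $\sqcup$ under \eqref{eq:absdual}, which is immediate from Construction~\ref{constr:foamcomposition} since nothing is glued.

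One point needs care. The statement concerns $\FoamCata[\labdata](W,W')=\PreFoamCat[\labdata](W,W')/I_1(W,W')$, whereas $\unicon(W'\sqcup W^r)$ is the quotient by $I_1(\emptyset,W'\sqcup W^r)$. So I would first verify that $I_1(W,W')$ and $I_1(\emptyset,W'\sqcup W^r)$ coincide under \eqref{eq:absdual}. Both are radicals of pairings computed by $\eval$ of the same closed foams, namely the trace closure of $X\circ\Sigma$ versus gluing the dual foam. This should follow from cyclicity of the trace and the fact that the identity foam implements the duality \eqref{eq:absdual}. With this in hand, surjectivity of $\circ$ in (1) follows.

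\textbf{$I_1=I_2$.} By Lemma~\ref{lem:foamideal} we already have $I_1\subset I_2$. For the converse, take $\Sigma\in I_2(W,W')$ and any $X\in\PreFoamCat[\labdata](W',W)$. By the surjectivity just proved, applied to the pair $(W',W)$ and lifted to prefoams, we can write
\[
X\equiv\sum_i \alpha_i\circ\beta_i \pmod{I_1(W',W)},
\]
with $\beta_i\colon W'\to\emptyset$ and $\alpha_i\colon\emptyset\to W$. Since $I_1$ is an ideal and $\eval\circ\tr$ kills $I_1$ in each argument, we get
\[
\langle X,\Sigma\rangle=\sum_i\eval(\tr_W(\alpha_i\circ\beta_i\circ\Sigma))=\sum_i\eval(\beta_i\circ\Sigma\circ\alpha_i)=0,
\]
using cyclicity of the trace and $\Sigma\in I_2$. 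Hence $\Sigma\in I_1$, and $\pi\colon\FoamCata[\labdata]\to\FoamCatb[\labdata]$ is an isomorphism.

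\textbf{Part (2).} Monoidality of the ideal means that $\Sigma\in I(W,W')$ implies $\Sigma\sqcup\tau\in I(W\sqcup V,W'\sqcup V')$ for any $\tau$. Using \eqref{eq:absdual}, this reduces to the case of morphisms out of $\emptyset$, $\Sigma\in I(\emptyset,\cdot)$ and $\tau\in\PreFoamCat[\labdata](\emptyset,\cdot)$, which is exactly the computation in the proof of Lemma~\ref{lem:endempty}(2). So the quotient $\FoamCat[\labdata]$ inherits the symmetric monoidal structure. Finally, $\webeval$ is strong monoidal because its lax structure maps are the maps \eqref{eq:tensorprodFoam}, which are isomorphisms by hypothesis, and compatibility with the symmetry was already noted in Lemma~\ref{lem:endempty}.

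\textbf{Main obstacle.} I expect the main difficulty to be the bookkeeping step: showing that the ideal $I_1$, and therefore the quotient, is compatible with the duality identification \eqref{eq:absdual}. This requires checking that gluing with the identity foam $\id_W$, viewed as a cup or cap, gives mutually inverse bijections on prefoams that intertwine the relevant pairings (the zigzag identity).
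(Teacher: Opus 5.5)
Your part (1) follows the paper's argument. Surjectivity of $\circ$ is read off from the isomorphism \eqref{eq:tensorprodFoam} via the bending identification \eqref{eq:absdual}. Then $I_2\subset I_1$ follows by writing a test foam as $\sum_i \alpha_i\circ\beta_i$ modulo $I_1$ and using cyclicity of the trace. The compatibility of $I_1$ with \eqref{eq:absdual}, which you flag as the main obstacle, is used silently in the paper: it sits in the unexplained vertical arrows of its commutative square. Your sketch covers what is needed, namely that closing $X\circ\Sigma$ with $\id_W$ gives the same closed foam as gluing $\Sigma$ directly to the bent foam.

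For (2) you take a different route. The paper proves $\Sigma\sqcup\Theta\in I_2(V\sqcup W,V'\sqcup W')$ in three steps: it tests against $\eval(\tau'\circ(\Sigma\sqcup\Theta)\circ\tau)$; it uses the neck-cutting from (1) to split $\tau\colon\emptyset\to V\sqcup W$ as $\sum_i a_i\,\Sigma'_i\sqcup\Sigma''_i$; and it absorbs $\Theta$, $\tau'$ and $\Sigma''_i$ into a single test foam for $\Sigma\in I_2(V,V')$. You instead bend everything to morphisms out of $\emptyset$ and reuse the computation of \Cref{lem:endempty}(2). That computation shows membership in $I_1$ directly, by regluing $\tau$ onto the test foam.

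Your route is correct and slightly stronger. It shows that $I_1$ is a tensor ideal for any evaluation, without (M), using only the $I_1$--\eqref{eq:absdual} compatibility you already need in (1). Strong monoidality then enters only through $I_1=I_2$ and the strongness of $\webeval$. The paper stays with $I_2$, whose tests range only over closings that factor through $\emptyset$. In general \eqref{eq:absdual} does not carry $I_2(W,W')$ to $I_2(\emptyset,W'\sqcup W^r)=I_1(\emptyset,W'\sqcup W^r)$, since that would already force $I_1=I_2$. This is why the paper has to invoke the decomposition from (1) at that point.
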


\begin{remark}
\label{rem:Khprops}
    In \cite{khovanov2020universal}, Khovanov discusses a number of properties of relevance concerning the universal construction (see also \cite{BHMV}):
    \begin{itemize}
        \item[(M)] is the strong monoidality of the functor $\unicon$.
        \item[(M')] is the surjectivity expressed in Lemma~\ref{lem:strong}, usually referred to as \emph{neck cutting}.
        \item[(M'')] is a weaker form of (M) that requires $\unicon(W) \otimes \unicon(V) \to \unicon(V\sqcup W)$ only to be an isomorphism onto a $\kb$-module direct summand, for all $V,W\in \PreFoamCat[\labdata]$.
    \end{itemize}
    The proof of Lemma~\ref{lem:strong} starts with observing the implication (M)$\implies$(M'), whose converse also holds if $\kb$ is a field, see \cite{khovanov2020universal}.
\end{remark}

\begin{proof}[Proof of \Cref{lem:strong}]
    (1) The claimed surjectivity (indeed, bijectivity) of $\circ$ follows from strong monoidality of $\unicon$, since $\circ$ corresponds to the map of \eqref{eq:tensorprodFoam} after translating through the commutative square:
        \[
    \begin{tikzcd}
	\FoamCata[\labdata](\emptyset,W')\otimes \FoamCata[\labdata](W,\emptyset) & \FoamCata[\labdata](W,W')\\
	\FoamCata[\labdata](\emptyset,W')\otimes \FoamCata[\labdata](\emptyset,W^r)   & \FoamCata[\labdata](\emptyset,W' \sqcup W^r)
	\arrow["\circ",from=1-1, to=1-2]
    \arrow[from=2-1, to=2-2]
    \arrow[from=1-1, to=2-1]
    \arrow[from=1-2, to=2-2]
\end{tikzcd}
\] 

    The inclusion $I_1(W,W')\subset I_2(W,W')$ follows from Lemma \ref{lem:foamideal}. Let $\Sigma\in I_2(W,W')$ and consider an arbitrary $\tau\in\PreFoamCat[\labdata](W',W)$ and $\pi(\tau)=\underline{\tau}\in \FoamCata[\labdata](W',W)$. By surjectivity of $\circ$, one has
    \[
        \underline{\tau}=\sum_i a_i \underline{\Sigma}'_i\circ\underline{\Sigma}''_i
    \]
    for $\underline{\Sigma}'_i\in\FoamCata[\labdata](\emptyset,W)$, $\underline{\Sigma}''_i\in\FoamCata[\labdata](W',\emptyset)$ and $a_i\in\kb$. For all $i$,
    \begin{align*}
        \langle \Sigma'_i\circ\Sigma''_i,\Sigma\rangle 
        &=
        \eval(\tr_W(\Sigma'_i\circ\Sigma''_i\circ\Sigma)) \\
        &= \eval(\tr_\emptyset(\Sigma''_i\circ\Sigma\circ\Sigma'_i)) \\
        &= \eval(\Sigma''_i\circ\Sigma\circ\Sigma'_i) \\
        &= \langle\Sigma''_i,\Sigma,\Sigma'_i\rangle
        =0.
    \end{align*}
    Therefore, $\langle\tau,\Sigma\rangle=0$ and one concludes that $I_2(W,W')\subset I_1(W,W')$.

    (2) Given objects $V,V',W,W'\in \PreFoamCat[\labdata]$ consider the tensor product
    \begin{equation} \label{eq:tensfoamcat}
    \PreFoamCat[\labdata](V, V') \otimes \PreFoamCat[\labdata](W,W') \to \PreFoamCat[\labdata](V\sqcup W,V'\sqcup W')
    \end{equation}
    induced by disjoint union of foams, denoted by $\sqcup$. Let $\Sigma\in I(V,V')\subset \PreFoamCat[\labdata](V,V')$ and $\Theta\in \PreFoamCat[\labdata](W,W')$.
    Let $\tau\in \PreFoamCat[\labdata](\emptyset,V\sqcup W)$ and $\tau'\in \PreFoamCat[\labdata](V'\sqcup W',\emptyset)$ be arbitrary.
    By point (1), $\underline{\tau}=\pi(\tau)\in \FoamCat[\labdata](\emptyset,V\sqcup W)=\FoamCat[\labdata](V^r,W)$ decomposes as 
    \[
        \underline{\tau}=\sum_i a_i \underline{\Sigma}'_i\circ\underline{\Sigma}''_i
    \]
    for $\underline{\Sigma}'_i\in\FoamCat[\labdata](V^r,\emptyset)=\FoamCat[\labdata](\emptyset,V)$, $\underline{\Sigma}''_i\in\FoamCat[\labdata](\emptyset,W)$ and $a_i\in\kb$. Then, for all $i$,
    \begin{align*}
    \langle \tau', \Sigma \sqcup \Theta,\Sigma'_i\sqcup \Sigma''_i \rangle 
    &= \eval(\tau'\circ (\Sigma \sqcup \Theta)\circ (\Sigma'_i\sqcup \Sigma''_i)) \\
    & = \eval(\tau'\circ (\Sigma \circ \Sigma'_i) \sqcup (\Theta\circ \Sigma''_i))
    \\
    &= \eval(\Sigma''_i \circ \Theta \circ \tau'\circ \Sigma \circ \Sigma'_i)
    \\
    &=\langle\Sigma''_i \circ \Theta \circ \tau', \Sigma, \Sigma'_i \rangle =0
    \end{align*}
    which shows that $\langle \tau', \Sigma \sqcup \Theta,\tau \rangle =0$ and, in turn, that $\Sigma \sqcup \Theta\in I_2(V\sqcup W,V'\sqcup W')=I(V\sqcup W,V'\sqcup W')$. Thus, the tensor product \eqref{eq:tensfoamcat} descends to $\FoamCat[\labdata]$. As $\unicon=\webeval\circ \pi$, the strong monoidality of $\webeval$ follows from the strong monoidality of $\unicon$.
\end{proof}

\begin{lemma}
\label{lem:A_imply_M}
    Suppose that $\eval$ is multiplicative and the following property holds:
    \begin{itemize}
    \item[(A)] Every $W\in \FoamCata[\labdata]$ is a biproduct  $W\cong \bigoplus_{i=1}^m \emptyset$ for some $m\in \N_0$.
    \end{itemize}
    Then:
    \begin{enumerate}
\item Property (M) holds, i.e.\ the functor $\unicon$ is strongly (symmetric) monoidal.
\item All morphism $\kb$-modules in $\FoamCat[\labdata]=\FoamCata[\labdata]$ are free of finite rank.
\item The functor $\webeval$ from Lemma~\ref{lem:strong} induces an equivalence of categories, again denoted by $\webeval$:
    \[
        \webeval\colon \Mat(\FoamCat[\labdata]) \longrightarrow \kb\Mod^{\textnormal{fr}}
    \]from the additive closure $\Mat(\FoamCat[\labdata])$ of $\FoamCat[\labdata]$ to the full subcategory $\kb\Mod^{\textnormal{fr}}$ of $\kb\Mod$ on the free $\kb$-modules of finite rank.
\end{enumerate}
 \end{lemma}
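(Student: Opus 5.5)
The key input is that by Lemma~\ref{lem:endempty}(1) we have $\FoamCata[\labdata](\emptyset,\emptyset)\cong\kb$ as algebras via $\eval'$. Property (A) then lets us compute every hom-module as a matrix module over this ring, and all three claims follow by transporting structure along the biproduct decompositions.

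\textbf{Step 1: hom-modules (claim (2)).} Fix a biproduct decomposition $W\cong\bigoplus_{i=1}^m\emptyset$ in $\FoamCata[\labdata]$, with inclusions $\iota_i\colon\emptyset\to W$ and projections $p_i\colon W\to\emptyset$ satisfying $p_j\iota_i=\delta_{ij}\id_\emptyset$ and $\sum_i\iota_ip_i=\id_W$. Similarly decompose $W'\cong\bigoplus_{j=1}^{m'}\emptyset$. Then
\[
\FoamCata[\labdata](W,W')\cong\bigoplus_{i,j}\FoamCata[\labdata](\emptyset,\emptyset)\cong\kb^{mm'},
\]
so every hom-module is free of finite rank. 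In particular $\unicon(W)=\FoamCata[\labdata](\emptyset,W)\cong\kb^m$, with basis the classes of the $\iota_i$.

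\textbf{Step 2: strong monoidality (claim (1)).} Disjoint union is only known to descend to the quotient on homs out of $\emptyset$, via \eqref{eq:tensorprodFoam}, so I will avoid using $\sqcup$ on arbitrary quotient morphisms. Lift the $\iota_i,p_i$ for $W$ and the $\iota'_k,p'_k$ for $V$ to prefoams. The goal is that $\iota_i\sqcup\iota'_k$ and $p_i\sqcup p'_k$ exhibit $W\sqcup V$ as $\bigoplus_{i,k}\emptyset$ in $\FoamCata[\labdata]$.
\begin{itemize}
\item The relation $(p_j\sqcup p'_l)(\iota_i\sqcup\iota'_k)=(p_j\iota_i)\sqcup(p'_l\iota'_k)$ holds in $\PreFoamCat[\labdata]$. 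Since $\eval$ is multiplicative and $\eval'$ is injective, this closed foam equals $\delta_{ij}\delta_{kl}$ in $\FoamCata[\labdata](\emptyset,\emptyset)$.
\item For completeness, it suffices to show that $D:=\id_{W\sqcup V}-\sum_{i,k}(\iota_ip_i)\sqcup(\iota'_kp'_k)$ lies in $I_1$. Pair it with an arbitrary $X\colon W\sqcup V\to W\sqcup V$. Regard $X$ as a morphism with $W$ as input and output, with $V$ treated as a passive boundary via the identifications \eqref{eq:absdual}. Then replace $\id_W$ by $\sum_i\iota_ip_i$, which is allowed since their difference lies in $I_1(W,W)$ and $I_1$ is an ideal (Lemma~\ref{lem:foamideal}). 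Doing the same for $V$ shows that $\langle X,D\rangle=0$.
\end{itemize}
Consequently $\unicon(W\sqcup V)$ is free on the $\iota_i\sqcup\iota'_k$, which are exactly the images of the basis tensors $\iota_i\otimes\iota'_k$ under \eqref{eq:tensorprodFoam}. So the lax structure map is an isomorphism, and the unit map $\kb\to\unicon(\emptyset)$ is an isomorphism by Lemma~\ref{lem:endempty}(1).

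\textbf{Step 3: the equivalence (claim (3)).} Lemma~\ref{lem:strong} now applies, so $\FoamCat[\labdata]=\FoamCata[\labdata]$ and $\webeval$ is defined. Extend $\webeval$ additively to $\Mat(\FoamCat[\labdata])$.
\begin{itemize}
\item Full faithfulness on objects of the form $\emptyset$ is $\eval'\colon\FoamCat[\labdata](\emptyset,\emptyset)\cong\kb=\End(\kb)$. Since both sides are additive and every object is a biproduct of copies of $\emptyset$, full faithfulness extends to all of $\Mat(\FoamCat[\labdata])$.
\item For essential surjectivity, $\kb^m\cong\webeval(\emptyset^{\oplus m})$, since $\webeval$ is additive.
\end{itemize}

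The main obstacle is Step 2. The difficulty is that disjoint union of morphisms in the quotient $\FoamCata[\labdata]$ is not yet known to be well defined, so the argument must stay at the level of prefoams and ideal membership. In particular, the "partial trace" manipulation used to show $D\in I_1$ must be justified explicitly using cyclicity of $\tr$ and the identifications \eqref{eq:absdual}.
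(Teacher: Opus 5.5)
Your proposal is correct. Its skeleton matches the paper's proof: use (A) and $\FoamCata[\labdata](\emptyset,\emptyset)\cong\kb$ from \Cref{lem:endempty} to turn every hom-module into a matrix module. It also contains a step the paper's proof does not. The paper writes ``we may assume $V=\bigoplus\emptyset$, $W=\bigoplus\emptyset$'' and $\kb^m\otimes\kb^n\cong\kb^{mn}\cong\FoamCata[\labdata](\emptyset,V\sqcup W)$. This tacitly uses that $\sqcup$ is compatible with the quotient, and that this isomorphism is the lax structure map \eqref{eq:tensorprodFoam}. At that stage, however, only the descent of $\sqcup$ on homs out of $\emptyset$ is known, and the monoidality of the ideal in \Cref{lem:strong}(2) presupposes (M). Your Step~2 closes exactly this gap.

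When you write it up, note that what you need is neither the ideal property from \Cref{lem:foamideal} nor cyclicity of the trace. What you need is the partial-trace identity
\[
\mathrm{tr}_{W\sqcup V}\bigl(X\circ(a\sqcup b)\bigr)=\mathrm{tr}_W\bigl(\mathrm{tr}^V(X\circ(\mathrm{id}_W\sqcup b))\circ a\bigr),
\]
where $\mathrm{tr}^V$ glues $\mathrm{id}_V$ onto the $V$-part of the boundary. This is an equality of glued prefoams, namely associativity of gluing along disjoint boundary pieces via \eqref{eq:absdual}. It gives $\langle X,a\sqcup b\rangle=0$ for all $a\in I_1(W,W)$ and arbitrary $b$. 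Together with its mirror image (roles of $W$ and $V$ exchanged), this handles both terms of $D=(\mathrm{id}_W-\sum_i\iota_ip_i)\sqcup\mathrm{id}_V+(\sum_i\iota_ip_i)\sqcup(\mathrm{id}_V-\sum_k\iota'_kp'_k)$.

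The argument uses neither (M) nor multiplicativity, so it shows that $I_1$ is a $\sqcup$-ideal in general. With that in hand, the paper's ``we may assume'' is justified, and the rest of your argument (Steps~1 and~3) coincides with the paper's.
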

\begin{proof}
    For $V,W\in \FoamCata[\labdata]$ we may assume $V=\bigoplus_{i=1}^m \emptyset$ and $W=\bigoplus_{i=1}^n$ and use that by Lemma~\ref{lem:endempty} we have $\FoamCata[\labdata](\emptyset,\emptyset)) \cong \kb$ and thus:
       \[
        \FoamCata[\labdata](\emptyset,V)\otimes \FoamCata[\labdata](\emptyset,W) \cong \kb^{m}\otimes \kb^{n} \cong \kb^{m,n}\cong \FoamCata[\labdata](\emptyset,V\sqcup W)
        \]
        Similar, we have that
        \[
        \FoamCata[\labdata](V,W) \cong \mathrm{Mat}(m\times n,\kb)
    \]
    is free of finite rank. The stated equivalence of categories follows.
\end{proof}

\begin{example}
\label{ex:glnProperties}
    In the Robert--Wagner $\mathfrak{gl}_N$-foam construction from \Cref{ex:glneval}, the closed foam evaluation map (denoted by $\langle - \rangle$ in \cite{RoW}) is multiplicative and $\iota$-involutive, with involution $\iota=\id$. Moreover, the following properties hold:
    \begin{itemize}
        \item (A) follows from \cite[proof of Theorem 3.30]{RoW}.
        \item (M) follows from (A) and \Cref{lem:A_imply_M}.
        \item (M') follows from (M), by \Cref{lem:strong} and \Cref{rem:Khprops}, and is expressed by relation~(10) of \cite[Proposition 3.32]{RoW}.
        \item (F), introduced below in \Cref{rem:additionalprops}, follows from (A) and \Cref{lem:FM'iffA}.
    \end{itemize}
\end{example}

\subsection{Pairing and sesquilinearity}
\label{sec:pairing}
In this section, we investigate an alternative characterization of property (A) from \Cref{lem:A_imply_M} in case of an involutive foam evaluation in the sense of \Cref{def:involutive}. This subsection is independent of the remainder of the paper.
\smallskip

For each pair of objects $W,V\in\PreFoamCat[\labdata]^1$, the pairing $\langle -,-\rangle$ from \Cref{lem:foamideal} descends to nondegenerate $\kb$-linear pairing on $\FoamCata[\labdata]$, that we denote again by $\langle -,-\rangle$ 
\begin{gather*}
    \FoamCata[\labdata](V,W)\otimes\FoamCata[\labdata](W,V)\longrightarrow\kb \\
    \underline{\Sigma}'\otimes\underline{\Sigma}\mapsto \langle\underline{\Sigma}',\underline{\Sigma}\rangle\coloneqq \langle \Sigma',\Sigma\rangle
\end{gather*}
where $\Sigma$ and $\Sigma'$ are any representatives of $\underline{\Sigma}$ and $\underline{\Sigma}'$.

\begin{lemma}
    The symmetry of \eqref{eq:prefoaminvo} respects the ideals  $I_1(-,=)$ and $I_2(-,=)$ in $\PreFoamCat[\labdata]$ and thus descends to the quotients $\FoamCata[\labdata]$ and $\FoamCatb[\labdata]$.
\end{lemma}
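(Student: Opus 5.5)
The plan is to show that the reversal map $-^r\colon \HomPreFoamCat[\labdata](W_1,W_2)\to \HomPreFoamCat[\labdata](W_2,W_1)$ of \eqref{eq:prefoaminvo} is compatible with composition and evaluation. From this it follows that it maps $I_j(W_1,W_2)$ into $I_j(W_2,W_1)$ for $j=1,2$. Two observations about $-^r$ are needed. First, it is $\iota$-semilinear, since it acts by $\iota$ on decorations and $\iota$ is a ring involution. Second, it is contravariant and monoidal: for composable prefoams $\Sigma_1\colon W_1\to W_2$ and $\Sigma_2\colon W_2\to W_3$ we have $(\Sigma_2\circ\Sigma_1)^r=\Sigma_1^r\circ\Sigma_2^r$, we have $(\Sigma\sqcup\Sigma')^r=\Sigma^r\sqcup\Sigma'^r$, and $\id_W^r$ corresponds to $\id_{W^r}$ (equivalently $\id_W$, under \eqref{eq:absdual}). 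These identities I would check directly from Construction~\ref{constr:foamcomposition}. Reversing all arc orientations and all boundary words commutes with each step (a), (b1), (b2) and (c) of the gluing algorithm, because each step is symmetric under reversing words. The counts $g_+,n_\pm$ are unchanged, so genera and numbers of boundary components agree. Decorations are multiplied facetwise, and $\iota$ is multiplicative on each $\Dec(a)$, so decorations also match.

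With these in hand, I would first treat $I_2$. Let $\Sigma\in I_2(W,W')$ and take arbitrary $\Sigma'\in\HomPreFoamCat[\labdata](\emptyset,W')$ and $\Sigma''\in\HomPreFoamCat[\labdata](W,\emptyset)$, viewed as test foams for $\Sigma^r\colon W'\to W$. Involutivity gives
\[\eval(\Sigma''\circ\Sigma^r\circ\Sigma')=\iota\bigl(\eval((\Sigma''\circ\Sigma^r\circ\Sigma')^r)\bigr)=\iota\bigl(\eval(\Sigma'^r\circ\Sigma\circ\Sigma''^r)\bigr)=\iota(0)=0.\]
Here $\Sigma''^r\in\HomPreFoamCat[\labdata](\emptyset,W)$ and $\Sigma'^r\in\HomPreFoamCat[\labdata](W',\emptyset)$ are legitimate test foams for $\Sigma$. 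Since $\iota$ is additive and $-^r$ is semilinear, the same holds for $\kb$-linear combinations, and so $\Sigma^r\in I_2(W',W)$. For $I_1$ the argument is the same, using the trace: $\tr_W(\Sigma'\circ\Sigma)^r=\tr_{W'}(\Sigma^r\circ\Sigma'^r)$. This identity follows from the contravariance together with $\id_W^r=\id_{W^r}$ and the cyclicity of the trace already noted before Lemma~\ref{lem:foamideal}. Then $\langle\tau,\Sigma^r\rangle=\iota\langle\Sigma,\tau^r\rangle=\iota\langle\tau^r,\Sigma\rangle$, and the pairing is symmetric, so it vanishes whenever $\Sigma\in I_1$.

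Since $-^r$ is an involution, the inclusions $I_j(W,W')^r\subset I_j(W',W)$ are in fact equalities, and $-^r$ descends to $\iota$-semilinear bijections on the quotients $\FoamCata[\labdata]$ and $\FoamCatb[\labdata]$. The step I expect to be the main obstacle is verifying the contravariance $(\Sigma_2\circ\Sigma_1)^r=\Sigma_1^r\circ\Sigma_2^r$ combinatorially. One has to track how the identifications \eqref{eq:absdual} swap source and target webs, and check that the new seams $N_{\text{loop}}\sqcup N_{\text{arc}}$ receive reversed orientations while their cyclic orderings are preserved. I would handle this by working throughout with foams $\emptyset\to W_2\sqcup W_1^r$ and noting that the composite is given by the same gluing data regardless of how the boundary is partitioned into source and target.
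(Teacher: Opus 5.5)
Your proposal is correct and is essentially the paper's argument. The paper proves the $I_2$ case by exactly your computation $\langle \Sigma',\Phi^r,\Sigma\rangle=\iota\langle \Sigma^r,\Phi,(\Sigma')^r\rangle=\iota(0)=0$, declares the $I_1$ case analogous, and takes the contravariance $(\Sigma_2\circ\Sigma_1)^r=\Sigma_1^r\circ\Sigma_2^r$ and its compatibility with traces for granted, so your extra verification only fills in what the paper leaves implicit. One harmless index slip: since $\Sigma^r\circ\Sigma'^r\colon W\to W$, the trace identity should read $\mathrm{tr}_W(\Sigma'\circ\Sigma)^r=\mathrm{tr}_W(\Sigma^r\circ\Sigma'^r)$, or equivalently $\mathrm{tr}_{W'}(\Sigma'^r\circ\Sigma^r)$ by cyclicity.
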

\begin{proof}
    Let $V,W\in \PreFoamCat[\labdata]$ and $\Phi\in I_2(V,W)$. Then for $\Sigma \in \PreFoamCat[\labdata](\emptyset,W)$ and $\Sigma' \in \PreFoamCat[\labdata](V,\emptyset)$ we have  
 \[
 \langle \Sigma', \Phi^r, \Sigma \rangle 
 =
  \iota \langle \Sigma^r, \Phi, (\Sigma')^r \rangle 
 = \iota(0) = 0 
 \]
 and so $\Phi^r\in I_2(W,V)$. The proof for $I_1$ is analogous.
\end{proof}

Consider now, for any pair of objects $W,V\in \FoamCata[\labdata]$, the $\kb$-linear map
\begin{align*}
    \phi_{W,V}\colon\; \FoamCata[\labdata](V,W) &\longrightarrow \FoamCata[\labdata](W,V)^*
    \\
    \underline{\Sigma}' &\longmapsto \langle \underline{\Sigma}',-\rangle
\end{align*} 

    We also consider the $\iota$-sesquilinear ($\iota$-antilinear in the first argument) pairing:
\begin{gather*}
    \langle -^r,-\rangle\colon \FoamCata[\labdata](W,V)\times\FoamCata[\labdata](W,V)\longrightarrow\kb \\
   (\underline{\Sigma}',\underline{\Sigma})\mapsto \langle{\underline{\Sigma}'}^r,\underline{\Sigma}\rangle = \phi_{W,V}({\underline{\Sigma}'}^r)(\underline{\Sigma})
\end{gather*}
where ${\underline{\Sigma}'}^r\in\FoamCata[\labdata](W^r,V^r)=\FoamCata[\labdata](V,W)$.

\begin{remark}
\label{rem:additionalprops}
In addition to the properties from \Cref{rem:Khprops}, Khovanov~\cite{khovanov2020universal} discusses the following two properties (see also \cite{BHMV}):
    \begin{itemize}
        \item[(I)]  For all $V,W\in \FoamCata[\labdata]$ the map $\phi_{W,V}$ is an isomorphism.
        \item[(F)] For all $V,W\in \FoamCata[\labdata]$ the module $\FoamCata[\labdata](W,V)$ is free of finite rank and the sesquilinear form  
        \[\langle -^r,-\rangle \colon \FoamCata[\labdata](W,V) \times \FoamCata[\labdata](W,V) \to \kb \]
        is unimodular.

    \end{itemize}
\end{remark}

\begin{lemma}
    Suppose that $\eval$ is involutive in the sense of \Cref{def:involutive}, with respect to an involution $\iota$. Then:
    \begin{enumerate}
        \item The map $\phi_{W,V}$ is injective for all $W,V\in \FoamCata[\labdata]$.
        \item If (F) holds, then $\phi_{W,V}$ is an isomorphism for all $W,V\in \FoamCata[\labdata]$, i.e.\ (I) holds.
    \end{enumerate}
\end{lemma}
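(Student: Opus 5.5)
Part (1) is essentially immediate from the construction of $\FoamCata[\labdata]$. The pairing $\langle -,-\rangle$ on $\FoamCata[\labdata]$ is nondegenerate by definition, because $I_1$ is the radical. Concretely, suppose $\underline{\Sigma}'\in\FoamCata[\labdata](V,W)$ satisfies $\phi_{W,V}(\underline{\Sigma}')=0$. Then $\langle \Sigma',\Sigma\rangle=0$ for every representative $\Sigma\in\PreFoamCat[\labdata](W,V)$. By the symmetry of the pairing, $\langle\Sigma',\Sigma\rangle=\eval(\tr(\Sigma\circ\Sigma'))=\langle\Sigma,\Sigma'\rangle$, which uses cyclicity of the trace. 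Hence $\Sigma'$ lies in $\bigcap_{\Sigma}\ker\langle\Sigma,-\rangle=I_1(V,W)$, and so $\underline{\Sigma}'=0$.

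I would spell out the symmetry check explicitly, since it is the only point requiring care: $I_1(V,W)$ is defined via kernels of $\langle \Sigma,-\rangle$ with $\Sigma\in\PreFoamCat[\labdata](W,V)$, whereas $\phi_{W,V}$ pairs with $\Sigma'$ in the first slot. The identity $\tr_W(\Sigma'\circ\Sigma)=\tr_V(\Sigma\circ\Sigma')$ stated before \Cref{lem:foamideal} bridges this. Involutivity is not needed for (1), though it is harmless.

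For part (2), assume (F), so that $M:=\FoamCata[\labdata](W,V)$ is free of finite rank and the sesquilinear form $\langle -^r,-\rangle$ on $M$ is unimodular. Unimodularity means that the map $M\to M^*$, $\underline{\Sigma}'\mapsto\langle{\underline{\Sigma}'}^r,-\rangle$, is bijective; this map is $\iota$-semilinear, but bijectivity is what we need. By definition this map factors as
\[
M\xrightarrow{\;-^r\;}\FoamCata[\labdata](V,W)\xrightarrow{\;\phi_{W,V}\;}M^*.
\]
The reversal $-^r$ descends to the quotients by the preceding lemma and is an involution, so it is a bijection $M\to\FoamCata[\labdata](W^r,V^r)=\FoamCata[\labdata](V,W)$. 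Consequently $\phi_{W,V}=(\text{unimodular map})\circ(-^r)^{-1}$ is bijective, and being $\kb$-linear it is an isomorphism. This gives (I).

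The main obstacle is bookkeeping: one has to confirm that the identification $\FoamCata[\labdata](W^r,V^r)=\FoamCata[\labdata](V,W)$ from \eqref{eq:absdual} is compatible with the descended reversal and with $\phi_{W,V}$. In other words, the composite above must literally be the unimodular form's adjoint map, with no hidden extra reversal. I would verify this directly on prefoam representatives using \eqref{eq:prefoaminvo}.
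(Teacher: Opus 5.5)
Your proposal is correct and follows essentially the same route as the paper. Part (1) is the nondegeneracy of the descended pairing, and you make explicit the symmetry step via cyclicity of the trace, which the paper leaves implicit. Part (2) is the factorization $\phi_{W,V}=(\phi_{W,V}\circ -^r)\circ -^r$, where $\phi_{W,V}\circ -^r$ is bijective by unimodularity. Your bookkeeping concern disappears because the paper defines $\langle -^r,-\rangle$ directly as $\phi_{W,V}({\underline{\Sigma}'}^r)(\underline{\Sigma})$.
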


\begin{proof}
    (1) Injectivity of $\phi_{W,V}$ follows immediately from nondegeneracy of the pairing $\langle -,-\rangle$.

    (2) The conditions imply that the pairing $\langle-^r,-\rangle$ induces an isomorphism 
    \[
        \FoamCata[\labdata](W,V) \xrightarrow{\cong} \FoamCata[\labdata](W,V)^*
    \]
    given by $\phi_{W,V}\circ -^r$. 
    It follows that $\phi_{W,V}=(\phi_{W,V}\circ -^r)\circ -^r$ is an isomorphism.
\end{proof}

\begin{lemma}\label{lem:FM'iffA}
    Suppose that $\eval$ is multiplicative and involutive. The following are equivalent:
    \begin{enumerate}
        \item[(FM')] Properties (F) and (M') hold.
        \item[(A)] Every $W\in \FoamCata[\labdata]$ is a biproduct  $W\cong \bigoplus_{i=1}^m \emptyset$ for some $m\in \N_0$.
    \end{enumerate}
\end{lemma}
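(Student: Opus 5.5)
We prove the two implications separately, working throughout in $\FoamCata[\labdata]$, where $\FoamCata[\labdata](\emptyset,\emptyset)\cong\kb$ by \Cref{lem:endempty}(1).

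For (A)$\Rightarrow$(FM'), \Cref{lem:A_imply_M} already gives (M), hence (M') by \Cref{lem:strong}(1), and it shows all hom-modules are free of finite rank. It remains to prove unimodularity of $\langle -^r,-\rangle$ on $\FoamCata[\labdata](W,V)$. Using (A), I fix biproduct data $\iota_i\colon\emptyset\to W$, $p_i\colon W\to\emptyset$ with $p_j\iota_i=\delta_{ij}$ and $\sum_i\iota_ip_i=\id_W$, and similarly $\iota'_k,p'_k$ for $V$. Then $\FoamCata[\labdata](W,V)$ has basis $\iota'_kp_i$. Since the pairing $\langle-,-\rangle$ is nondegenerate on the quotient, the Gram matrix of $\langle-,-\rangle$ between the bases $\{\iota_ip'_k\}$ of $\FoamCata[\labdata](V,W)$ and $\{\iota'_kp_i\}$ is computed by cyclicity of the trace: $\eval(\tr(\iota_jp'_l\iota'_kp_i))=\delta_{lk}\delta_{ij}$. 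This is the identity matrix, so $\phi_{W,V}$ is an isomorphism. Since $-^r$ is a $\iota$-semilinear bijection $\FoamCata[\labdata](W,V)\to\FoamCata[\labdata](V,W)$, the composite $\phi_{W,V}\circ -^r$ is a $\iota$-semilinear bijection onto the dual, which is exactly unimodularity of the sesquilinear form.

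For (FM')$\Rightarrow$(A), I fix $W$. By (F) the module $\FoamCata[\labdata](\emptyset,W)$ is free of finite rank, say with basis $\iota_1,\dots,\iota_m$. By (F) for the pair $(\emptyset,W)$, together with the preceding lemma, $\phi$ is an isomorphism. Hence there is a dual basis $p_1,\dots,p_m$ of $\FoamCata[\labdata](W,\emptyset)$ with $\langle p_j,\iota_i\rangle=p_j\circ\iota_i=\delta_{ij}$ in $\FoamCata[\labdata](\emptyset,\emptyset)=\kb$. This yields $p_j\iota_i=\delta_{ij}$. It remains to show $e:=\sum_i\iota_ip_i=\id_W$, and this is the step I expect to be the crux.

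By (M'), $\id_W=\sum_a c_a\,\sigma_a\tau_a$ with $\sigma_a\colon\emptyset\to W$ and $\tau_a\colon W\to\emptyset$. Expanding each $\sigma_a=\sum_i\langle p_i,\sigma_a\rangle\iota_i$ in the basis shows $\id_W\circ\iota_k=\iota_k$, so $\iota_k$ lies in the image of post-composition. A cleaner route is to compute $e\circ\sigma=\sum_i\iota_i(p_i\sigma)=\sigma$ for every $\sigma\colon\emptyset\to W$, since $p_i\sigma$ is the $i$-th coordinate of $\sigma$. Then
\[
e=e\circ\id_W=\sum_a c_a\,(e\sigma_a)\tau_a=\sum_a c_a\,\sigma_a\tau_a=\id_W .
\]
So $W\cong\bigoplus_{i=1}^m\emptyset$ in $\FoamCata[\labdata]$. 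Only the $\phi$-isomorphism for hom-modules out of $\emptyset$ and neck cutting are needed, so I would double-check that the earlier lemma indeed yields $\phi_{\emptyset,W}$ surjective, which is used to produce the dual basis.
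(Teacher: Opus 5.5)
Your proof is correct and follows the same skeleton as the paper's. For (FM')$\Rightarrow$(A) you take a basis of the foams $\emptyset\to W$, a dual family of foams $W\to\emptyset$ obtained from $\phi$ being an isomorphism, and use (M') to recover $\mathrm{id}_W$; the preceding lemma does supply this isomorphism from (F), so your closing worry is unfounded. For (A)$\Rightarrow$(FM') you identify hom-modules with matrix modules.

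The only differences are minor. You conclude $\sum_i\iota_ip_i=\mathrm{id}_W$ by observing that this idempotent fixes every $\sigma\colon\emptyset\to W$ and then applying neck cutting, whereas the paper solves for the coefficients $\lambda_{ij}$ by pairing. You also make explicit, via the semilinear bijection $-^r$, why $\phi_{W,V}$ being an isomorphism gives unimodularity of $\langle -^r,-\rangle$, a step the paper leaves implicit. Your sentence about $\mathrm{id}_W\circ\iota_k=\iota_k$ is vacuous and can be dropped.
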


\begin{proof}
    We first prove the implication (FM') $\Rightarrow$ (A) and, to this end, assume (F) and (M').
    Given $W\in \FoamCata[\labdata]$, let $m=\rk_{\kb}\FoamCata[\labdata](\emptyset,W)$ and choose a basis $\Sigma_1,\ldots,\Sigma_m$ of $\FoamCata[\labdata](\emptyset,W)$ with dual basis $\Sigma^1,\ldots,\Sigma^m$ of $\FoamCata[\labdata](\emptyset,W)^*$.  We have $\Sigma^i=\langle \tau_i,-\rangle$, for some $\tau_i\in \FoamCata[\labdata](\emptyset,W^r)=\FoamCata[\labdata](W,\emptyset)$ forming a basis that is dual in the sense that $\langle \tau_i,\Sigma_j\rangle=\delta_{ij}$.
    By property (M'), we can write $\id_W=\sum_{i,j=1}^m \lambda_{ij}\Sigma_i\circ \tau_j$ for some scalars $\lambda_{ij}\in \kb$. For $a,b\in\{1,\ldots,m\}$ we then compute:
    \begin{align*}
        \delta_{ab} &= \langle \Sigma_a, \tau_b \rangle = \langle \id_W, \Sigma_a\circ\tau_b\rangle = \langle \sum_{i,j=1}^m \lambda_{ij}\Sigma_i\circ \tau_j, \Sigma_a\circ\tau_b\rangle \\
        &= \sum_{i,j=1}^m \lambda_{ij} \langle \Sigma_i, \tau_b \rangle\, \langle \tau_j, \Sigma_a \rangle = \sum_{i,j=1}^m \lambda_{ij} \delta_{ib}\delta_{ja} =\lambda_{ab}.
    \end{align*}
Thus we obtain inverse isomorphisms
\begin{equation}
\label{eq:biproduct}
 \begin{tikzcd}
	W & {\bigoplus_{i=1}^m \emptyset}
	\arrow["f", shift left, from=1-1, to=1-2]
	\arrow["g", shift left, from=1-2, to=1-1]
\end{tikzcd}
, \quad f=(\tau_1,\ldots,\tau_m)^t , \quad g=(\Sigma_1,\ldots,\Sigma_m).
\end{equation}
that present $W$ as a biproduct as required by (A).
\smallskip 

    For the implication (A) $\Rightarrow$ (FM') we assume that (A) holds and already know that it implies (M) and hence (M'). To see (F), we may assume $W=\bigoplus_{i=1}^m \emptyset$ and $V=\bigoplus_{i=1}^n$ in $\FoamCata[\labdata]$ and recall from \Cref{lem:A_imply_M} that
       \[
        \FoamCata[\labdata](V,W) \cong \mathrm{Mat}(m\times n,\kb)
    \] is free of finite rank.
   The pairing corresponds to the pairing of matrices $(\phi_1,\phi_2)\mapsto \mathrm{tr}(\phi_1^t\circ \phi_2)$, which is clearly unimodular.\qedhere
\end{proof}

\subsection{Frobenius algebras and gradings}
\label{sec:Frob}
In this subsection we comment on how to incorporate gradings into the previous discussion. 

\newcommand{\Cob}{\mathrm{Cob}_2}

We let $\Cob$ denote the symmetric monoidal category of oriented 2-dimensional cobordisms between oriented closed 1-manifolds. Consider a $\kb$-linear labeling datum $\labdata$ as in Definition~\ref{def:labdata} and a closed foam evaluation, denoted by $\eval$. Let $x\in \labdata_1$ be a label. We have symmetric monoidal functors: 
\begin{equation}\label{eq:extract2DTQFT}
    \Cob \to \PreFoamCat^{\mathrm{nil}} \xrightarrow{\text{label by } x} 
    \PreFoamCat[\labdata] \to 
    \FoamCata[\labdata] \to
    \FoamCatb[\labdata]\to
    \kb\Mod
\end{equation}
given by interpreting compact oriented 1-manifolds as webs and 2-dimensional oriented cobordisms as foams, and subsequently labeling everything by $x$. Here, $\PreFoamCat^{\mathrm{nil}}\subset \PreFoamCat$ is the subcategory of nilvalent webs and prefoams. We let $\mathcal{Z}_x$  denote the composite functor. 

\begin{lemma}\label{lem:extractFAperLabel} Suppose that the foam evaluation is multiplicative and satisfies assumption (A)\footnote{This assumption is perhaps stronger than necessary, but will be satisfied in our examples of interest.} from Lemma~\ref{lem:FM'iffA}, then for every label $x\in \labdata_1$, the functor $\mathcal{Z}_x$ is symmetric monoidal, i.e.\ a 2-dimensional \emph{topological quantum field theory}. Consequently, the invariant $A_x:=\mathcal{Z}_x(\bigcirc)$ of the $x$-labeled loop web is a commutative Frobenius algebra free over $\kb$. 
\end{lemma}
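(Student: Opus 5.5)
The plan is to check that each functor in the composite \eqref{eq:extract2DTQFT} is symmetric monoidal, and then to invoke the classification of 2-dimensional TQFTs by commutative Frobenius algebras.

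The first two functors, $\Cob\to\PreFoamCat^{\mathrm{nil}}$ and "label by $x$", are strict symmetric monoidal, since both are defined by disjoint union on both sides. For the first functor, one has to check that gluing of surfaces matches \Cref{constr:foamcomposition}. A connected cobordism is sent to a facet with genus and number of boundary circles recorded. Gluing along circles is then exactly step (a) of the algorithm, and the genus update agrees with the Euler characteristic computation of \Cref{rem:Euler}. It is also compatible with composition and identities, and it is strictly monoidal. Under labeling by $x$, every facet receives the label $x$ and the decoration $1$. Composition multiplies decorations, so it preserves the decoration $1$.

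Next, $\PreFoamCat[\labdata]\to\FoamCata[\labdata]\to\FoamCatb[\labdata]\to\kb\Mod$ composes to $\unicon^2\cong\unicon$. Because $\eval$ is multiplicative and (A) holds, \Cref{lem:A_imply_M} gives (M): $\unicon$ is strong symmetric monoidal. \Cref{lem:strong} then identifies $\FoamCata[\labdata]=\FoamCatb[\labdata]=\FoamCat[\labdata]$, shows that the ideal is monoidal, and shows that $\webeval$ is strong symmetric monoidal. The functor $\mathcal{Z}_x$ is a composite of symmetric monoidal functors, so it is symmetric monoidal, i.e.\ a 2-dimensional TQFT. The coherence of the monoidal structure maps needs some care, but it follows from naturality of \eqref{eq:tensorprodFoam}.

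By the classical correspondence, $A_x=\mathcal{Z}_x(\bigcirc)$ is a commutative Frobenius algebra over $\kb$. Its multiplication, unit, comultiplication and counit are the images of the pair of pants, the cup, the reversed pair of pants and the cap. For freeness, $A_x=\unicon(\bigcirc_x)=\FoamCat[\labdata](\emptyset,\bigcirc_x)$, and \Cref{lem:A_imply_M}(2) says that this morphism module is free of finite rank.

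The main obstacle is the claim that $A_x$ is a Frobenius algebra in the strong sense of being dualizable over $\kb$. This needs the zigzag (snake) identities for the cap and the cup, and these hold automatically once $\mathcal{Z}_x$ is a strong symmetric monoidal functor, because they already hold in $\Cob$. So the real content is the strong monoidality supplied by (A), together with the careful verification that $\Cob\to\PreFoamCat^{\mathrm{nil}}$ respects composition, including composite surfaces whose genus arises from self-gluings. I would treat that verification by reducing to the generators of $\Cob$.
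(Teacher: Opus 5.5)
Your proposal is correct and follows the argument the paper relies on. The paper states this lemma without a separate written proof; it rests on Lemma~\ref{lem:A_imply_M} (multiplicativity and (A) give (M), so $\unicon$, and hence the composite $\mathcal{Z}_x$, is strong symmetric monoidal, and $\unicon(\bigcirc_x)$ is free of finite rank), together with Lemma~\ref{lem:strong} and the classification of 2-dimensional TQFTs by commutative Frobenius algebras, exactly as you assemble them, and your check that the functor from $\mathrm{Cob}_2$ respects gluing via step (a) of Construction~\ref{constr:foamcomposition} and the genus formula of Remark~\ref{rem:Euler} fills in a detail the paper takes for granted.
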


\newcommand{\degr}{d}
\newcommand{\shift}{c}

We extend the labeling data to a graded setting.

\begin{definition}[Graded labeling data and degrees for trivalent and nilvalent foams]
\label{def:grlabdata}
    Let $\kb$ be a graded commutative ring. A \emph{graded\footnote{Throughout we only consider $\Z$-gradings.} $\kb$-linear labeling datum} for trivalent foams consists of a labeling datum $\labdata$ in the sense of \Cref{def:labdata}, such that for each $a\in \labdata_1$ the algebra of decorations $\Dec(a)$ is equipped with the structure of a grading, together with weights
    \[\alpha(k)\in \Z,\quad \alpha(k,l)\in \Z,\quad \alpha_1(k,l,h)\in \Z,\quad \alpha_2(k,l,h)\in \Z \]
    for, respectively, facets of label $k\in\labdata_1$, seams of labels $k,l,k+l\in \labdata_1$, and the two types\footnote{Compare \Cref{def:labclosedfoam}.} of vertices involving labels $k,l,h,k+l,l+h,k+l+h\in \labdata_1$. If $f$, $s$, $v$ are a facet, a seam, and a vertex with these labels, we also set
    \[\alpha(f):=\alpha(k),\quad \alpha(s):=\alpha(k,l),\quad \alpha(v):=\alpha_i(k,l,h),\]
    the latter with $i$ depending on the type of the vertex, subject to compatibility conditions expressed below.

    A \emph{graded $\kb$-linear labeling datum} $\labdata$ for nilvalent foams consists of a set $\labdata_1$ and for each $a\in \labdata_1$ the data of a graded commutative $\kb$-algebra $\Dec(a)$ and a weight $\alpha(a)\in \Z$ for facets of label $a$.

 The \emph{degree} of a combinatorial $\labdata$-foam (closed or with boundary) $\Sigma$ with facet set F and homogeneous decoration $\Dec(F)$ is then defined as:
\begin{equation}
\label{eq:foamdeg}
    \deg(\Sigma)= \deg(\Dec(F)) + \sum_f \alpha(f)\chi(f) - \sum_s \alpha(s)\chi(s) + \sum_v \alpha(v)\chi(v),
\end{equation}
 where $\chi(-)$ is the Euler characteristic from \Cref{rem:Euler}.
 \smallskip
 
 \Cref{def:labfoammorphism} now naturally yields a graded $\kb$-module $\HomPreFoamCat[\labdata](W_1,W_2)$ spanned by $\labdata$-prefoams with homogeneous decorations between closed combinatorial $\labdata$-webs $W_1,W_2$. 
The weights are required to be compatible so the composition operation from Construction \ref{constr:labfoamcomposition} is grading-preserving and, in particular, identity $\labdata$-foams have degree zero.
\smallskip 

Likewise, by considering only homogeneous decorations, $\HomPreFoamCat[\labdata](\emptyset,\emptyset)$ becomes a graded $\kb$-module. A closed foam evaluation for foams with graded $\kb$-linear labeling data is \emph{graded}, if 
\[
         \eval \colon \HomPreFoamCat[\labdata](\emptyset,\emptyset) \to \kb
         \]
is a map of graded $\kb$-modules, i.e.\ a grading-preserving $\kb$-linear map.
\end{definition}

Starting from a graded labeling data and a graded foam evaluation, the constructions of \Cref{sec:univ} can be carried out mutatis mutandis. In particular, we obtain symmetric monoidal graded $\kb$-linear categories $\PreFoamCat[\labdata]$, $\FoamCata[\labdata]$ and $\FoamCatb[\labdata]$. Here, as in the introduction, \emph{graded $\kb$-linear} means enriched in graded $\kb$-modules. When invoking property (A) from \Cref{lem:A_imply_M} in the graded setting, we assume that the structure morphisms of the biproduct are homogeneous. The functor $\webeval$ then also extends to the graded setting.

\begin{example}
For graded labeling data $\labdata$, consider labels $k,l\in \labdata_1$ and a $\labdata$-web $\Theta_{k,l}$ consisting of three edges labeled $k,l,k+l$, connected by a split vertex and a merge vertex. The identity foam on $\Theta_{k,l}$ being of degree zero imposes the compatibility condition:
\[
0 = 0 + \alpha(k) + \alpha(l) + \alpha(k+l) - \alpha(k,l) - \alpha(l,k)
\]
\end{example}

\begin{definition}\label{def:gradingFrob}
    Let $\kb$ be a graded commutative ring. A \emph{graded Frobenius algebra over $\kb$ of degree $d\in \Z$} is a Frobenius algebra $(A,m,\eta, \Delta,\varepsilon)$ over $\kb$ whose underlying $\kb$-module $A$ is graded and where the structure maps $m$, $\eta$, $\Delta$ and $\varepsilon$ are homogeneous of the following degrees
    \begin{align*}
        &\deg(m) = 0, &&\deg(\Delta)=d,\\
        &\deg(\eta)=0, &&\deg(\varepsilon)=-d.
    \end{align*}
\end{definition}

\begin{remark} We often consider non-negatively graded Frobenius algebras, for which the degree $d$ will also be non-negative. By shifting the underlying graded $\kb$-module \emph{down} by $c\in \Z_{\geq 0}$, one arrives at the notion of a $(c,d-c)$-graded Frobenius algebra discussed in \cite{ClivioGrFA}, for which the degrees of the structure maps are
    \begin{align*}
        &\deg(m) = c, &&\deg(\Delta)=d-c,\\
        &\deg(\eta)=-c, &&\deg(\varepsilon)=c-d.
    \end{align*}
Conversely, a $(c',d')$-graded Frobenius algebra corresponds to a graded Frobenius algebra of degree $d=c'+d'$ which has been shifted down by $c=c'$.

\end{remark}

With the above definitions, we can refine \Cref{lem:extractFAperLabel} as follows.

\begin{cor}
    Consider a graded foam evaluation for combinatorial $\labdata$-foams with graded $\kb$-linear labeling data. Suppose that the graded foam evaluation is multiplicative and satisfies property (A) from Lemma~\ref{lem:FM'iffA}. Then for every label $x\in \labdata_1$, it follows that $A_x=\mathcal{Z}_x(\bigcirc)$ is a shifted graded commutative Frobenius algebra free over $\kb$. Its degree is $\degr(x)=-2\alpha(x)$ and its shift is $\shift(x)=-\alpha(x)$. 
\end{cor}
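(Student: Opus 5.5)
The plan is to run the graded version of the construction behind \Cref{lem:extractFAperLabel} and keep track of the degrees of the structure maps. By that lemma, with (A) holding homogeneously in the graded setting, $\mathcal{Z}_x$ is a symmetric monoidal functor into graded $\kb$-modules. The graded version of \Cref{lem:A_imply_M} gives that $A_x=\webeval(\bigcirc_x)=\FoamCat[\labdata](\emptyset,\bigcirc_x)$ is free of finite rank, and homogeneous since it is spanned by homogeneous foams. All four Frobenius structure maps of $A_x$ are $\mathcal{Z}_x$ of elementary cobordisms: the pair of pants $m$, the cup $\eta$, the copants $\Delta$, and the cap $\varepsilon$. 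Because composition in $\PreFoamCat[\labdata]$ is grading-preserving, post-composition with a homogeneous foam $\Sigma$ is a map of degree $\deg(\Sigma)$ on Hom-spaces. So the degree of each structure map is the degree of the corresponding undecorated $x$-labelled foam.

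Next I compute these degrees with \eqref{eq:foamdeg}. A nilvalent-type cobordism labelled by $x$ has no seams or vertices and decoration $1$, so its degree is $\alpha(x)\chi(f)$. The relevant Euler characteristics are as follows.
\begin{itemize}
\item The cup and the cap are disks, with $\chi=1$.
\item The pants and the copants have $\chi=-1$.
\end{itemize}
This gives the following degrees.
\begin{itemize}
\item $\deg(\eta)=\deg(\varepsilon)=\alpha(x)$.
\item $\deg(m)=\deg(\Delta)=-\alpha(x)$.
\end{itemize}

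I then match these with a shifted graded Frobenius algebra in the sense of the preceding remark. A $(c,d-c)$-graded Frobenius algebra has $\deg m=c$, $\deg\eta=-c$, $\deg\Delta=d-c$ and $\deg\varepsilon=c-d$. Solving gives $c=-\alpha(x)$ from $m$ and $\eta$, and $d-c=-\alpha(x)$, so $d=-2\alpha(x)$, from $\Delta$ and $\varepsilon$. These four equations are mutually consistent, which is exactly the statement. So $A_x$, shifted back up by $c(x)=-\alpha(x)$, is a graded Frobenius algebra of degree $d(x)=-2\alpha(x)$. Commutativity and freeness carry over from the ungraded lemma.

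The main point needing care is the first step. One must check that the identification $A_x=\FoamCat[\labdata](\emptyset,\bigcirc_x)$ uses no extra shift, so that $\mathcal{Z}_x$ sends a foam of degree $k$ to a map of degree $k$. One must also check that the lax/strong monoidal structure maps $A_x\otimes A_x\to\mathcal{Z}_x(\bigcirc\sqcup\bigcirc)$ from \Cref{lem:endempty} are degree zero. The latter holds because disjoint union adds degrees by additivity of \eqref{eq:foamdeg}. Once these are verified, the degree bookkeeping above is routine.
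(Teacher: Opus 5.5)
Your proposal is correct and follows the same route as the paper. You read off $\deg\eta_x=\deg\varepsilon_x=\alpha(x)\chi(\text{disk})=\alpha(x)$ and solve $-c(x)=\alpha(x)$ and $c(x)-d(x)=\alpha(x)$. Your extra check that the pants and copants (with $\chi=-1$) give $\deg m=\deg\Delta=-\alpha(x)$ is a consistency verification the paper leaves implicit; it confirms that all four structure maps fit the $(c,d-c)$ pattern.
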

\begin{proof}
    We consider the cup and cap cobordisms $\mathrm{cup} \colon \emptyset\to \bigcirc$ and $\mathrm{cap}\colon \bigcirc \to \emptyset$, each labeled by $x\in\labdata_1$, as morphisms of $\PreFoamCat[\labdata]$ and record their degrees $\alpha(x)\chi(\mathrm{cup})=\alpha(x)\chi(\mathrm{cap})=\alpha(x)$. The unit maps $\eta_x=\mathcal{Z}_x(\mathrm{cup})$ and counit maps $\varepsilon_x=\mathcal{Z}_x(\mathrm{cap})$ inherit these degrees and we have $\alpha(x)=\deg(\eta_x)=-\shift(x)$ and $\alpha(x)=\deg(\varepsilon_x)=\shift(x)-\degr(x)$.
\end{proof}

\begin{example}[Khovanov--Bar-Natan decorated surfaces]\label{ex:FAforBN}
    Recall from Example~\ref{ex:BNeval} the case of Khovanov--Bar-Natan decorated surfaces with $\kb$-linear labeling data $\labdata^A$ consisting of $\labdata_1^A=\{*\}$ and a commutative Frobenius algebra $\Dec(*)=A$. Then, by construction, we have $\mathcal{Z}_{*}(\bigcirc)=A$. In the graded case, we obtain a graded commutative Frobenius algebra over $\kb$ of degree $\degr(*)=-2\alpha(*)$, with downward shift $\shift(*)=\degr(*)/2=-\alpha(*)$. The example underlying the original construction of Khovanov homology is given by the commutative Frobenius algebra $\Z[x]/(x^2)$, which is free of rank 2 over $\kb=\Z$, with counit and comultiplication given by
    \begin{align*}
        \varepsilon\colon 1\mapsto 0,\; x\mapsto 1 \quad\quad\text{and}\quad\quad \Delta\colon 1\mapsto 1\otimes x + x\otimes 1,\; x\mapsto x\otimes x.
    \end{align*}
    Setting $\deg(1)=0$ and $\deg(x)=2$, the Frobenius algebra becomes graded of degree $\degr(*)=2$, and is then shifted down by $\shift(*)=1$. Indeed, $\mathcal{Z}_{*}(\bigcirc)$ then equals the unknot invariant of Khovanov homology, and every (undecorated) nilvalent foam is graded by its negative Euler characteristic.
    
    Another case of interest, for fixed $N\in \N$, has graded ground ring $\kb=\Z[a_1, \dots, a_{N}]$ with $\deg(a_i)=2i$, so that $A=\kb[x]/(x^N+a_{1}x^{N-1}+\dots +a_{N-1}x+a_{N})$ with $\deg(x)=2$ becomes a graded Frobenius algebra of degree $2N-2$ with respect to the $\kb$-linear counit $\varepsilon\colon x^k\mapsto \delta_{k,N-1}$, which is shifted down by $\shift=N-1$. This recovers the uncolored unknot invariant of the $\mathrm{GL}(N)$-equivariant $\glN$ link homology. Nilvalent foams (without decorations) are graded by their Euler characteristic scaled by $1-N$.
\end{example}

\begin{example}[Robert--Wagner $\glN$-foams]\label{ex:FAforRobertWagner}
    Let $N\in \N$ and recall Robert--Wagner $\glN$-foams from Example~\ref{ex:glneval} over the commutative ring $\kb=\Z$. Recall that the $\kb$-linear labeling data $\labdata^N$ has the underlying semigroup $\labdata_1^{N}=(\N_0,+)$. A grading on the ring of decorations $\Dec(n)=\Z[x_1,\ldots,x_n]^{S_n}$ is given by declaring that all variables $x_i$ have degree $2$. Weights for facets, seams, and vertices are given respectively by:
    \begin{align*}
        \alpha(k) =&\ -k(N-k), \\
         \alpha(k,l)=&\ -((k+l)(N-k-l)+kl), \\
        \alpha_i(k,l,h) =&\ -((k+l+h)(N-l-k-h)+ kl + lh + kh).
    \end{align*}
    These determine the degrees of combinatorial $\labdata$-foams by \eqref{eq:foamdeg} (cf.\ \cite[Definition 2.3]{RoW}). Observe that $k(N-k)$ is the complex dimension of $\mathrm{Gr}_{\mathbb{C}}(k,N)$, the Grassmannian of $k$-planes in $\mathbb{C}^N$, while the other weights are negative complex dimensions of 2- and 3-step partial flag varieties.

    The commutative Frobenius algebra $A_k$ associated to the label $k\in\N_0$, is given by the integer cohomology ring
    \begin{align*}
        A_k = H^*(\mathrm{Gr}_{\mathbb{C}}(k,N);\Z)
    \end{align*}
    With the above choice of grading of foams, the Frobenius algebra $A_k$ is graded, with degree $\degr(k)=\dim_\mathbb{R}(\mathrm{Gr}_{\mathbb{C}}(k,N))=2k(N-k)$ and downward shift by $c(k)=\frac{\degr(k)}{2}$. 
\end{example}

\section{A semistrict monoidal 2-category}\label{sec:2cat}

In this section we fix a commutative ring $\kb$,
a $\kb$-linear labeling datum $\labdata$ as in Definition~\ref{def:labdata} and a closed foam evaluation, denoted by $\eval$, 
which is multiplicative and such that property (A) from Lemma~\ref{lem:A_imply_M} is satisfied\footnote{And thus also
(M), (M'), (M'').}.
We will only consider the trivalent and nilvalent cases.

The goal of this section is to rigorously construct a locally $\kb$-linear semistrict monoidal 2-category $\Foambicat{\labdata}$ inspired by the following coarse description:
\begin{itemize}
\item Objects should be finite sequences of elements of $\labdata_1$, each equipped with orientation data \emph{inwards} or \emph{outwards}, on which the monoidal structure $\boxtimes$ acts by concatenation. We like to think of such sequences as encoding configurations of $\labdata_1$-labeled points in the unit interval $[0,1]$.
\item 1-morphisms should encode $\labdata$-labeled webs with boundary in the unit square $[0,1]^2$, mapping between two boundary sequences. These webs should be generated under $\boxtimes$ and composition $\hcompw$  by oriented identity webs, cups and caps as well as, in the trivalent case, trivalent merge and split vertices.
\item 2-morphisms between two parallel 1-morphisms given by webs $S$ and $T$ should encode linear combinations of $\labdata$-foams with corners, embedded in the unit cube $[0,1]^3$ with top boundary $T$ and bottom boundary $S$ with opposite orientation. 
\end{itemize}
We refer to \cite[\S 6]{2019arXiv190712194M} for a similar construction in the context of braided monoidal 2-categories.

\subsection{Objects and 1-morphisms} \label{subsec:objsAndOneMorphs}

We describe the objects and 1-morphisms of the to-be-constructed locally $\kb$-linear strict 2-category $\Foambicat{\labdata}$.

\begin{definition}[Objects] \label{def:obj2cat}
    The \emph{objects} of $\Foambicat{\labdata}$ are finite sequences of pairs in $\labdata_1\times \{\uparrow,\downarrow\}$, including the empty sequence $\emptyset$, whose entries we abbreviate to $\hat{x}:= (x,\uparrow)$ and $\widecheck{x} := (x,\downarrow)$ for $x\in \labdata_1$.
    The associative operation of concatenating sequences will be denoted $\boxtimes$. 
\end{definition}
For example, we have $
    \left(\hat{x}, \widecheck{y} \right) \boxtimes \left( \widecheck{z}\right):= \left(\hat{x}, \widecheck{y}, \widecheck{z}\right)$.

\newcommand{\capweb}{\mathrm{cap}}
\newcommand{\lcapweb}{\overleftarrow{\capweb}}
\newcommand{\rcapweb}{\overrightarrow{\capweb}}
\newcommand{\cupweb}{\mathrm{cup}}
\newcommand{\lcupweb}{\overleftarrow{\cupweb}}
\newcommand{\rcupweb}{\overrightarrow{\cupweb}}
\newcommand{\splitweb}{\widehat{\mathrm{split}}}
\newcommand{\mergeweb}{\widehat{\mathrm{merge}}}
\newcommand{\downsplitweb}{\widecheck{\mathrm{split}}}
\newcommand{\downmergeweb}{\widecheck{\mathrm{merge}}}

\newcommand{\genweb}[1]{#1\mhyphen\mathrm{GenWeb}}
\newcommand{\basweb}[1]{#1\mhyphen\mathrm{BasicWeb}}

\begin{definition}[1-morphisms]
\label{def:foamonemorphisms}
In the following, we define the \emph{1-morphisms} in $\Foambicat{\labdata}$. The \emph{source object} of a 1-morphism $W$ will be denoted $s(W)$ and the \emph{target object} $t(W)$.
\begin{enumerate}[(i)]
    \item The set $\genweb{\labdata}$ of \emph{generating $\labdata$-webs} consists of the following 
for all $x\in \labdata_1$
\begin{align*}
    \lcapweb_x\colon \left( \widecheck{x},\widehat{x}\right) \to \emptyset  && \lcupweb_x\colon \emptyset \to \left( \widehat{x},\widecheck{x}\right) \\
    \rcapweb_x\colon \left( \widehat{x},\widecheck{x}\right) \to \emptyset &&  \rcupweb_x\colon \emptyset \to \left( \widecheck{x},\widehat{x}\right)
\end{align*}
    if $\labdata$ is nilvalent. If $\labdata$ is trivalent, then for all $x,y\in \labdata_1$, the set $\genweb{\labdata}$ additionally contains:
\begin{align*}
   \splitweb_{x,y}\colon \left( \hat{x+y}\right) \to \left( \hat{x},\hat{y}\right) && \mergeweb_{x,y}\colon \left( \hat{x},\hat{y}\right) \to \left( \hat{x+y}\right)
\end{align*}

    \item The set $\basweb{\labdata}$ of \emph{basic $\labdata$-webs} consists of the following symbols for all objects $o_l,o_r\in \Foambicat{\labdata}$ and generating webs $W'\in \genweb{\labdata}$ with $W'\colon s(W')\to t(W')$:

\begin{align*}
    o_l\boxtimes W' \boxtimes o_r \colon o_l\boxtimes s(W') \boxtimes o_r \to o_l\boxtimes t(W') \boxtimes o_r.
\end{align*}

    \item A \emph{1-morphism} of $\Foambicat{\labdata}$ from $s$ to $t$ is a finite sequence $\left(W_k,\dots,W_1\right)$ of length $k\geq 1$ with entries given by 
     basic webs $W_i\in \basweb{\labdata}$ that are composable in the sense  that $s(W_{i+1})=t(W_i)$ for $1\leq i<k$ and $t=t(W_k)$ and $s=s(W_1)$. If $s=t$, we also allow the empty sequence as (identity) endomorphism and denote it by $\id_{s}$. 
    \item The composition $\hcompw$ of 1-morphisms in $\Foambicat{\labdata}$ is given by concatenating sequences. It is strictly associative and strictly unital.
     \end{enumerate}
\end{definition}

\begin{conv}[Graphical calculus]\label{conv:graphcal}
    We have the following graphical calculus for generating $\labdata$-webs, to be read from bottom to top. Here and in the following, when we depict $\labdata$-webs, we use the following coordinate axes:
    \begin{align*}
        \axes
    \end{align*}
    The 0-axis is the direction of $\boxtimes$, and the 1-axis is the direction of composition. For all $x\in \labdata_1$, we have
    \begin{align*}
        \lcapx{x} = \lcapweb_x && \lcupx{x}=\lcupweb_x\\
        \rcapx{x} = \rcapweb_x && \rcupx{x}=\rcupweb_x
    \end{align*}
    if $\labdata$ is nilvalent. If $\labdata$ is trivalent, we have additionally for $x,y\in \labdata_1$
    \begin{align*}
         \splitxy{x}{y} = \splitweb_{x,y} && \mergexy{x}{y} = \mergeweb_{x,y}.
    \end{align*}
Sometimes, it is useful to label strands not just by elements of $\labdata_1$, but by objects $s \in \Foambicat{\labdata}$. For instance, we draw the identity 1-morphism $\id_s\in \Hom_{\Foambicat{\labdata}}(s,s)$ as
    \begin{align*}
        \idonobj{s} = \id_s.
    \end{align*}
For objects $o_l,o_r\in \Foambicat{\labdata}$ and generating webs $W'\in \genweb{\labdata}$, we draw basic $\labdata$-webs as
    \begin{align*}
        \basicweb = o_l\boxtimes W' \boxtimes o_r.
    \end{align*}
\end{conv}

\begin{remark}
    As emphasized in \Cref{conv:graphcal} graphically, \Cref{def:foamonemorphisms} includes only \emph{upward-oriented} or \emph{progressive} merge and split webs as generators alongside cups and caps. We chose this presentation for reasons of compatibility with the formalism of factorizing families of type A perverse schobers~\cite{dyckerhoff2025perverseschoberscoxetertype}. Rotated merge and split webs can be obtained as composites, see e.g.\ \Cref{constr:mates} below. An alternative definition including the downward-oriented merge and split vertices as generators is commented on in \Cref{rem:strict}.
\end{remark}

\begin{conv}
\label{conv:vertedges}
    In the following, we introduce a few useful concepts for working with 1-morphisms in $\Foambicat{\labdata}$. Let $W=\left(W_k,\dots,W_1\right)\colon s\to t$ be an arbitrary 1-morphism. Here we allow $k=0$ for $s=t$.
    \begin{itemize}
        \item The set $V(W)$ of \emph{vertices} is the collection of basic $\labdata$-webs appearing in $W$, whose underlying generating $\labdata$-web is of type $\splitweb_{*,*}$ or $\mergeweb_{*,*}$.
        \item The set $E(W)$ of \emph{edges} is defined as follows. 
        Consider the set $\mathcal{E}$ of positions in the sequences forming source and target objects of the basic webs $W_i$. Its elements are encoded as pairs of the form $(p,s(W_i))$ with $p\in\{1,\dots, \mathrm{len}(s(W_i))\}$ or $(p,t(W_i))$ with $p\in\{1,\dots, \mathrm{len}(t(W_i))\}$ where $\mathrm{len}(-)$ computes the length of a sequence.

       We define $E(W)$ as the set of equivalence classes of the equivalence relation on $\mathcal{E}$ generated by:
            \begin{itemize}
                \item $(p,s(W_i)) \sim (p+1,s(W_i))$ (resp.\ $(p,t(W_i)) \sim (p+1,t(W_i))$) if they appear paired in any generator of the form $\lcapweb_x,\rcapweb_x$ (resp.\ $\lcupweb_x, \rcupweb_x$) for some $x\in \labdata_1$.
                \item $(p,s(W_{i+1}))\sim (p,t(W_i))$ identifying source and target of composed basic webs.
                 \item $(p,s(W_i))\sim(p,t(W_i))$ if $p$ is within the joint $o_l$ or $o_r$ parts of $s(W_{i})$ and $t(W_i)$. 
            \end{itemize}
        \item 
        For an edge $e$, we write $n_\partial(e)$ for the number of representing positions in $s$ or $t$. Note that $n_\partial(e)\in\{0,1,2\}$ for any edge $e$. We distinguish types of edges by $n_\partial(e)$. An edge $e$ is considered an \emph{internal edge} if $n_\partial(e)=0$. Otherwise it is considered a \emph{boundary edge}.
        A boundary edge $e$ is called a \emph{boundary arc} if $n_\partial(e)=1$. If this position is in $s$ resp.\ $t$, we call it a \emph{source arc} resp.\ \emph{target arc}. Note that then there is also a representing position in the source or target of a generating $\labdata$-web of type $\splitweb_{*,*}$ or $\mergeweb_{*,*}$. 
        If $n_\partial(e)=2$, we distinguish the following cases. It is considered a \emph{source turnback} resp.\ \emph{target turnback} if both representing positions are in $s$ resp.\ $t$. If $e$ has one representing position in $s$ and one in $t$ it is called a \emph{passing edge}. An internal edge is called an \emph{internal arc} if a representing position appears in the source or target of a generating $\labdata$-web of type $\splitweb_{*,*}$ or $\mergeweb_{*,*}$. Otherwise it is called an \emph{(internal) loop}. 
        Internal arcs are oriented from the vertex where they are represented by a position in the target object to the vertex where they are represented by a position in the source object. Boundary arcs, passing edges, and source turnbacks resp.\ target turnbacks are oriented from the position corresponding to $\hat{x}$ to the position corresponding to $\widecheck{x}$ in $s$ resp.\ $t$, where $x\in \labdata_1$.
        
        The following picture features examples of all types of edges mentioned above:
        \[
            \webconventionLabels
        \]
        \item The \emph{label} of an edge $e$ is defined to be the underlying label $x\in \labdata_1$ of the symbol $\hat{x}$ or $\widecheck{x}$ appearing at a representing position of the edge. We write $\labelfunction(e)=x$.
    \end{itemize}
\end{conv}

\begin{construction}\label{constr:absCWebOfMorphism}
    To any 1-morphism $W=\left(W_k,\dots,W_1\right) \colon \emptyset \to \emptyset$ in $\Foambicat{\labdata}$ we associate a combinatorial closed $\labdata$-web $\abs{W}\in \labWebSet{\labdata}$ in the sense of Definition~\ref{def:labclosedweb}.

    \begin{itemize}
        \item The set of vertices is $V^\partial:=V(W)$.
        \item The set of edges is $E^\partial:=E(W)$, partitioned into arcs and loops as in \Cref{conv:vertedges}.
        \item The labels on edges and orientations on arcs are defined as in \Cref{conv:vertedges}.
        \item Around vertices of type $\splitweb_{x,y}$ and $\mergeweb_{x,y}$, the edges are cyclically ordered by $([\hat{x}],[\hat{y}],[\hat{x+y}])$\footnote{In the embedded context, this is compatible with the left-hand rule relating seam orientations, cyclic orderings and the ambient orientation---as done by Khovanov \cite{Kho3} and Robert--Wagner \cite{RoW}.}.
    \end{itemize}
    
    If $W'\colon \emptyset \to \emptyset$ is another 1-morphism in $\Foambicat{\labdata}$, then it is clear that
    \[
    \abs{W' \hcompw W} = \abs{W'}\sqcup \abs{W}.
    \]
\end{construction}
\smallskip

In Section \ref{subsec:duals}, we will show that the semistrict monoidal 2-category $\Foambicat{\labdata}$ has duals and adjoints following \cite{barrett2018gray}. For this, we introduce two operations on $\Foambicat{\labdata}$ denoted by $\#$ and $*$.
\begin{construction}[$\#,*$ on objects] \label{constr:dualobj}
    We define two involutions $\#,*$ on the set of objects of $\Foambicat{\labdata}$. Given an object $s\in\Foambicat{\labdata}$, the involution $\#\colon s \mapsto s^\#$ reverses the order of sequences and interchanges entries $\hat{x}\leftrightarrow\widecheck{x}$, while the involution $*$ is defined as the identity on objects.
\end{construction}

\begin{remark}
\label{rem:reverseedge}
    For an object $s\in \Foambicat{\labdata}$, we draw the identity $\id_{s^\#}$ on $s^\#$ in two ways as
    \begin{align*}
        \idonobjdown{s}=\id_{s^\#} = \;\;\idonobj{s^\#}.
    \end{align*}
\end{remark}

\begin{construction} \label{constr:fold}
    Let $s\in \Foambicat{\labdata}$ be an object. We define the 1-morphism $\lcupweb_s\colon \emptyset \to s \boxtimes s^\#$ by induction on the length of the sequence underlying $s$. For length $0$ we declare $\lcupweb_\emptyset= \id_\emptyset\colon \emptyset \to \emptyset$. For length $1$ we set:
    \[ \lcupweb_s = 
    \begin{cases} 
    \lcupweb_x & \text{if } s=(\hat{x}) \text{ for } x\in \labdata_1,\\
    \rcupweb_x & \text{if } s=(\widecheck{x}) \text{ for } x\in \labdata_1.
    \end{cases}
    \] 
    For length $\ell\geq 2$ we write $s=s_1\boxtimes s'$ for $s'$ of length $\ell-1$ and declare:
    \[ \lcupweb_s = 
         (s_1 \boxtimes \lcupweb_{s'} \boxtimes s^\#_1 ) \hcompw \lcupweb_{s_1}
    \] 
Additionally, we induce the notation $\rcupweb_s:= \lcupweb_{s^\#}\colon \emptyset \to s^\# \boxtimes s$ and note that this is consistent with our previous notation for words of length 1. Completely analogously, we define 
\begin{align*}
    \lcapweb_s\colon  s^\# \boxtimes s\to \emptyset\quad \text{ and }\quad \rcapweb_s:=\lcapweb_{s^\#}\colon s \boxtimes s^\#\to \emptyset.
\end{align*}
We draw the above 1-morphisms as follows:
\begin{align*}
    \lcupx{s} = \lcupweb_s && \lcapx{s}=\lcapweb_s\\
    \rcupx{s} = \rcupweb_s && \rcapx{s}= \rcapweb_s
\end{align*}
Labels $x\in \labdata_1$ convert by default to labels by the object $\widehat{x}$ with the same orientation and, equivalently, to labels by the object $\widecheck{x}$ with opposite orientation.
\end{construction}

\newcommand{\rmate}[1]{#1^\#}
\newcommand{\lmate}[1]{{}^\##1}

\begin{construction}[Mates]\label{constr:mates}
Let $W\colon s\to t$ be a 1-morphism in $\Foambicat{\labdata}$. Then we define the (right-)mate of $W$ to be the 1-morphism\footnote{Note that our terminology agrees with the one of Douglas--Reutter in \cite[Section 2.2.2]{douglas2018fusion}. Their convention uses the same composition, but the opposite tensor product (see \cite[p.11]{douglas2018fusion}). Hence, translating between the graphical calculi for 1-morphisms corresponds to a reflection across the 1-axis.}

\[
\rmate{W}:=
 (\lcapweb_{t} \boxtimes s^\#) \hcompw (t^\# \boxtimes W \boxtimes s^\#) \hcompw (t^\# \boxtimes \lcupweb_{s}) \colon t^\# \to s^\#
\]
and the (left-)mate of $W$ to be the 1-morphism
\[
\lmate{W}:=
 (s^\#\boxtimes \rcapweb_{t}) \hcompw (s^\# \boxtimes W \boxtimes t^\#) \hcompw (\rcupweb_{s} \boxtimes t^\#) \colon t^\# \to s^\#.
\]
We draw the left- and right-mates as follows\footnote{Mnemonic: the superscript appears on the side, on which the wire connecting to the source object of $W$ has been bent upwards.}:
    \begin{align*}
        \leftmate{W}={}^{\#}W && \rightmate{W}=W^{\#}
    \end{align*}
\end{construction}

\begin{construction}[$*$ on 1-morphisms]\label{constr:starOn1morphs}
    We extend the two operation $*$ to the set of 1-morphisms $W$ of $\Foambicat{\labdata}$. We let $*\colon W \mapsto W^*$ be the involution on the set of 1-morphisms which is defined by acting on generating $\labdata$-webs by swapping
    \[
    \lcapweb_x \xleftrightarrow{*} \rcupweb_x, \quad \rcapweb_x \xleftrightarrow{*} \lcupweb_x, \quad \splitweb_{x,y} \xleftrightarrow{*} \mergeweb_{x,y}
    \]
    and then extending to basic $\labdata$-webs by setting
    \[(o_l\boxtimes W \boxtimes o_r)^*:= (o_l\boxtimes W^* \boxtimes o_r)\]
    and acting on general morphisms, i.e.\ $\hcompw$-composites of basic webs, term-wise and reversing the $\hcompw$-order\footnote{In the graphical calculus, this corresponds to a reflection in the $1$-direction, followed by changing the orientation of all web edges. 
    }:
    \[ (W_k\hcompw \cdots \hcompw W_1)^* := W_1^*\hcompw \cdots \hcompw W_k^*.\]
\end{construction}

\begin{remark}\label{rem:hashContravariantComp}
    The left- or right-mate operation on 1-morphisms will also act contravariantly in the $\hcompw$-direction, but only up to 2-isomorphisms. For the left-mate, these will be described in Remark~\ref{rem:HashAsFunctor}.
\end{remark}

\subsection{2-morphisms and vertical composition}
\label{sec:vertcomp}
In this subsection, we define the small $\kb$-linear category $\Foambicathom{\labdata}{t}{s}$ for any pair of objects $s,t$ of $\Foambicat{\labdata}$ which will form the category of 1-morphisms $s\to t$ in $\Foambicat{\labdata}$. The composition in $\Foambicathom{\labdata}{t}{s}$ will constitute the vertical composition of 2-morphisms in $\Foambicat{\labdata}$.

\begin{construction}[Internal hom]
\label{constr:inthom}
    Let $s,t$ be objects of $\Foambicat{\labdata}$ and $S,T\colon s \to t$ be 1-morphisms in $\Foambicat{\labdata}$. We declare the \emph{internal hom} from $S$ to $T$ to be the 1-morphism:
    \[
    \inthom(S,T):=  \rcapweb_t \hcompw (T \boxtimes t^\#) \hcompw (s \boxtimes S^{*\#}) \hcompw \lcupweb_s \;\colon\; \emptyset \to \emptyset
    \]
    In the graphical calculus, $\inthom(S,T)$ is depicted as:
    \begin{align*}
        \internalhom{S}{T}
    \end{align*}
\end{construction}

\begin{construction}[Vertices and edges of internal homs]
\label{constr:edgeinthom}
    Let $S,T\colon s\to t$ be 1-morphisms in $\Foambicat{\labdata}$. 
    There is a bijection 
    \begin{align*}
        V(\inthom(S,T)) \cong V(S^{*\#}) \sqcup V(T)
    \end{align*}
    for the vertices in the internal hom of $S$ and $T$. 
    For the edges, we write 
    \[ 
    E(S) = E(\topcirc{S})\sqcup E(\partial S)
    \]
    where $E(\topcirc{S})$ is the set of (internal) loops and internal arcs and $E(\partial S)$ is the set of boundary arcs, boundary turnbacks, and passing edges. We write \begin{align*}
        E(T|S):=E(\inthom(S,T)).
    \end{align*}
    Note that $E(T|S)$ only contains internal edges. There is a surjection
    \begin{equation}
        \label{eq:surjwebglue}
    \pi\colon E(T)\sqcup E(S^{*\#}) \rightarrow E(T|S) 
     \end{equation}
    which is injective on $E(\topcirc{T})\sqcup E(\topcirc{S^{*\#}})$, and sends $E(\partial T)\sqcup E(\partial S^{*\#})$ to the set of equivalence classes of the equivalence relation generated by the relation: The edges $e\in E(\partial T)$ and $e' \in E(\partial S^{*\#})$ are related if they share a position in $s$ or $t$.
    The image of $E(\partial T)\sqcup E(\partial S^{*\#})$ consists of the set of newly created internal loops and internal arcs. 
    We say that $\overline{e}\in E(T|S)$ \emph{contains} all edges $e\in E(T)\sqcup E(S^{*\#})$ in its preimage under \eqref{eq:surjwebglue}. 
    \end{construction}

\begin{lemma}\label{lem:SameAbsIntHom}
    Let $s,t$ be objects, and $S,T\colon s\to t$ be 1-morphisms in $\Foambicat{\labdata}$. The following internal homs have the same underlying combinatorial closed $\labdata$-webs:
    \begin{align*}
     \inthom(S,T),\quad \inthom(\id_s, S^*\hcompw T), \quad \inthom(\id_t, T\hcompw S^*), \\\inthom(T^*,S^*),\quad \inthom(S\hcompw T^*,\id_s),\quad \inthom(T^*\hcompw S,\id_t).
    \end{align*}
    Furthermore, we have
    \begin{align*}
        \abs{\inthom(S,T)^{*\#}} = \abs{\inthom(S^{*\#},T^{*\#})}.
    \end{align*}
\end{lemma}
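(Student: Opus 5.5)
The plan is to compare the combinatorial data of Construction~\ref{constr:absCWebOfMorphism} term by term. Recall that $\abs{-}$ of a closed 1-morphism $\emptyset\to\emptyset$ only records the vertices (the occurrences of $\splitweb$ and $\mergeweb$ generators), the edges (equivalence classes of positions), their labels, their orientations, and the cyclic orderings at vertices. It forgets the order in which basic webs are composed and the ambient objects $o_l,o_r$. So it suffices to exhibit, for each pair of internal homs in the list, bijections of vertex sets and of edge sets that preserve labels, orientations and cyclic orders.

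\textbf{Step 1: a decomposition of the vertex and edge data.} By Construction~\ref{constr:edgeinthom}, $V(\inthom(S,T))\cong V(S^{*\#})\sqcup V(T)$. The edge set $E(T|S)$ is the quotient of $E(T)\sqcup E(S^{*\#})$ under the surjection \eqref{eq:surjwebglue}. This quotient glues the boundary edges of $T$ and of $S^{*\#}$ along shared positions in $s$ and $t$.

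I will first record two facts about the operations $*$ and $\#$ on a 1-morphism $W$.
\begin{itemize}
\item The map $W\mapsto W^*$ induces a bijection $V(W)\cong V(W^*)$, exchanging split and merge. Since the cyclic order is $([\hat x],[\hat y],[\hat{x+y}])$ for both types, cyclic orders are preserved.
\item It also induces a bijection $E(W)\cong E(W^*)$ that preserves labels, reverses orientations, and exchanges source and target positions.
\item The mate $W\mapsto W^\#$ adds only caps and cups. So $V(W^\#)=V(W)$, and $E(W^\#)\cong E(W)$ with the same labels and orientations. The bending moves source positions to target positions, with $\hat x\leftrightarrow\widecheck x$ swapped.
\end{itemize}
Combining these, $S^{*\#}$ has the vertices of $S$ with types swapped. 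Its edges are those of $S$ with the orientation reversed twice, hence restored. Its boundary positions all lie in the target $s^\#\boxtimes t$ of the bent morphism.

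\textbf{Step 2: each closed web is a gluing of $S$ and $T$.} For each of the six internal homs, I describe $\abs{-}$ as the web obtained from the vertex and edge data of $S$ and $T$ (each unchanged) by identifying each source boundary position of $S$ with the matching source position of $T$, and each target position of $S$ with the matching target position of $T$.

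For $\inthom(\id_s,S^*\hcompw T)$, the composite $S^*\hcompw T\colon s\to s$ identifies the target positions $t$ of $T$ with those of $S^*$ directly. Closing it up with $\id_s^{*\#}$ and the outer cup and cap identifies the source positions $s$. Via the bijection $E(S^*)\cong E(S)$ of Step 1, this is the same gluing. The cases $\inthom(\id_t,T\hcompw S^*)$, $\inthom(S\hcompw T^*,\id_s)$ and $\inthom(T^*\hcompw S,\id_t)$ are analogous. For $\inthom(T^*,S^*)$ the roles are swapped, and $*$ is an involution on 1-morphisms, so it is the same gluing again.

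For the final identity $\abs{\inthom(S,T)^{*\#}}=\abs{\inthom(S^{*\#},T^{*\#})}$, the same reasoning applies. By Step 1, applying $*\#$ to a closed web preserves all of its data. Also $(S^{*\#})^{*\#}$ has the same data as $S$.

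\textbf{Main obstacle.} The hard part is checking that the orientations of the newly created edges agree across the six presentations. The orientation conventions of Convention~\ref{conv:vertedges} differ for turnbacks, passing edges and internal arcs, and an edge can change type between presentations (a passing edge in one may become part of a turnback in another). To handle this, I will argue that an internal arc of $\abs{-}$ is oriented by its endpoint vertices, using the target-to-source rule. Since vertex types and the adjacent positions are matched by the bijections above, the orientation is determined intrinsically. For loops, the orientation is determined by any constituent arc, using the consistency of the $\hat x\to\widecheck x$ convention under gluing. I expect this consistency to be checked quickly on generators.
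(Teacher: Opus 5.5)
Your strategy matches the paper's: read each closed web off as the gluing of its two constituent webs along their shared positions in $s$ and $t$. However, the orientation bookkeeping in Step~1 is wrong, and your argument for the last identity rests on a false claim.

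\textbf{The error in Step~1.} Your bullets correctly state two facts. First, $*$ exchanges split and merge and reverses arcs. Second, the mate $\#$ only inserts cups and caps, so it leaves vertex types and arc orientations unchanged. Combining them, orientations are reversed \emph{once}, not ``twice, hence restored''. That phrase also contradicts ``types swapped'', since in a trivalent web the split/merge type of a vertex is determined by the orientations of its arcs.

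So $S^{*\#}\colon s^\#\to t^\#$ carries exactly the data of $S^*$. Its boundary positions lie in $s^\#$ and $t^\#$, not all in a single target. Hence $\abs{\inthom(S,T)}$ is $T$ glued to $S^*$, not to $S$ ``unchanged'' as you state in Step~2.

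With this correction the first part is immediate. The $S^*$-piece of $\abs{\inthom(\id_s,S^*\hcompw T)}$ is literally the $S^{*\#}$-piece of $\abs{\inthom(S,T)}$. No appeal to the orientation-reversing bijection $E(S^*)\cong E(S)$ is needed, and the other presentations work the same way. Note also that parallelism forces the last two entries of the list to read $\inthom(S\hcompw T^*,\id_t)$ and $\inthom(T^*\hcompw S,\id_s)$.

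\textbf{The false claim in the last identity.} For $\abs{\inthom(S,T)^{*\#}}=\abs{\inthom(S^{*\#},T^{*\#})}$ you use that applying $*\#$ to a closed web preserves all its data. This is false. For $W\colon\emptyset\to\emptyset$ the mate is trivial, since $\lcupweb_\emptyset=\id_\emptyset$. So $\abs{W^{*\#}}=\abs{W^*}=\abs{W}^r$: under $v\mapsto v^*$ every split becomes a merge and every arc is reversed (Remark~\ref{rem:rAndBend}).

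You reach the right equation only because the same error is made on the right-hand side, where your Step~1 implicitly lets $T^{*\#}$ carry the data of $T$. The correct comparison is as follows.
\begin{itemize}
\item The left side is the reverse of ($T$ glued to $S^*$), i.e.\ $S$ glued to $T^*$.
\item The right side is $(S^{*\#})^{*\#}$, which carries the data of $S$, glued to $T^{*\#}$, which carries the data of $T^*$.
\end{itemize}
This is exactly the paper's ``similar consideration, here with $S$ and $T^*$''.

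\textbf{What is right.} Your ``main obstacle'' remark is correct. Arcs of a closed web are intrinsically oriented from the vertex where the edge sits in the target of a generator to the one where it sits in the source. Once the bookkeeping is fixed, this makes all the comparisons immediate. Loops of a combinatorial closed web carry no orientation, so there is nothing to check for them.
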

\begin{proof}
    Applying Construction \ref{constr:absCWebOfMorphism} to the internal homs defined in  Construction \ref{constr:inthom}, the underlying combinatorial closed $\labdata$-webs are obtained from $S^*$ and $T$ by identifying their boundary edges that share a position in $s$ and $t$. A similar consideration applies to the last equation, here with $S$ and $T^*$.
\end{proof}
\begin{remark}\label{rem:rAndBend}
    For a 1-morphism $W\colon \emptyset \to \emptyset$ in $\Foambicat{\labdata}$, we have 
    \begin{align*}
        \abs{W^{*\#}} = \abs{W}^r,
    \end{align*}
    where $-^r$ is the involution on combinatorial closed webs from \Cref{def:combclosedweb2}, which reverses the orientation of all edges.
    With the last statement of Lemma~\ref{lem:SameAbsIntHom}, we obtain
    \begin{align*}
        \abs{\inthom(S,T)}^r = \abs{\inthom(S^{*\#},T^{*\#})}.
    \end{align*}
\end{remark}

\begin{construction}[Internal identity]
\label{constr:intid}
        Let $s,t$ be objects of $\Foambicat{\labdata}$ and $S\colon s \to t$ a 1-morphism in $\Foambicat{\labdata}$.  We shall describe a combinatorial $\labdata$-foam
    \[
    \mathrm{cup}_S\colon \emptyset \to \abs{\inthom(S,S)}.
    \]
    \textbf{Vertices:} The foam $\mathrm{cup}_S$ has no internal vertices. The boundary vertices are partitioned as 
    \[
    V^\partial = V(S) \sqcup V(S^{*\#}).
    \]
    \textbf{Seams and edges:} There are no loop seams. The canonical bijection $ V(S) \cong V(S^{*\#})$, associating to each $v$ its partner $v^{*\#}$, is used to define arc seams $E=E_{\mathrm{arc}}:=\{s_v\}_{v\in V(S)}$, where, for each merge/split vertex $v\in V^\partial_2$, we define $s_{v}$ as the seam with $V(s_{v})=\{v,v^{*\#}\}$, oriented from the merge vertex to the split vertex. By Construction \ref{constr:edgeinthom}, the boundary edges are given by
    \begin{align*}
        E^\partial = E(S|S) = E(\topcirc{S}) \sqcup E(\topcirc{S^{*\#}}) \sqcup \pi_{S|S}(E(\partial S)\sqcup E(\partial S^{*\#})).
    \end{align*}
    \textbf{Facets:} We use the canonical bijection 
    \[ E(S)\cong E(S^{*\#}), \quad e\mapsto e^{*\#}\]
    to describe the set of facets, parametrized by edges $e\in E(S)$:
    \begin{itemize}
        \item For every (internal) loop $e\in E(\topcirc{S})$, consider its paired partner $e^{*\#}\in E(\topcirc{S^{*\#}})$. We have a facet $f(e)\in F_{0,2}$ with label $\labelfunction(e)=\labelfunction(e^{*\#})$ and boundary words $((e), (e^{*\#}))$.
        \item For every arc $e\in E(S)$, there are at most two edges of $\abs{\inthom(S,S)}$ containing $e$ or $e^{*\#}$. Together with at most two seams in $E$, these form a minimal cycle which we equip with the orientation inherited from $e$ (cf.\ \Cref{rem:cyclicSign}) and use as boundary word for a facet $f(e)\in F_{0,1}$ with label $\labelfunction(e)$.\footnote{There are two cases: (1) a boundary arc creates a single internal arc under gluing, the endpoints of which are joined by a single seam, the facet attaches along this arc and seam. (2) an internal arc with its duplicate in $S^{*\#}$ gives rise to two internal arcs upon gluing, whose endpoints are joined in pairs by two seams. In this case the boundary word of the facet is of length four.}
        \item For every passing edge or boundary turnback $e\in E(\partial S)$, there is an internal loop $\overline{e}$ in $\abs{\inthom(S,S)}$ containing $e$ and $e^{*\#}$. We have a facet $f(e)\in F_{0,1}$ with label $\labelfunction(e)$, with boundary word $(\overline{e})$, and with orientation inherited from $e$.
    \end{itemize}
 The cyclic ordering of all facets around seams is inherited from $\inthom(S,S)$.
\end{construction}

  \begin{construction}[Internal vertical composition]  
  \label{constr:intcomp}
    Let $s,t$ be objects and $S,T,U\colon s \to t$ be 1-morphisms in $\Foambicat{\labdata}$. We shall describe a combinatorial $\labdata$-foam:
    \[
    \mathrm{vert}_{U,T,S} \colon \abs{\inthom(T,U)} \sqcup \abs{\inthom(S,T)} \to \abs{\inthom(S,U)},
    \]
    where the underlying internal homs in the source and target are depicted as
    \begin{align*}
        \internalhom{T}{U},\quad \internalhom{S}{T} \quad \quad \text{and}\quad \quad \internalhom{S}{U}
    \end{align*}
    \textbf{Vertices:} The foam $\mathrm{vert}_{U,T,S}$ has no internal vertices and the boundary vertices are partitioned as 
    \[
    V^\partial = V(U) \sqcup V(U^{*\#}) \sqcup V(T) \sqcup V(T^{*\#}) \sqcup V(S) \sqcup V(S^{*\#}).
    \]
    \textbf{Seams and edges:} There are no loop seams. The arc seams are given by $E=E_{\mathrm{arc}}=\{s_v\}_{v\in V(U) \sqcup V(T) \sqcup V(S)}$ where, for each merge/split vertex $v\in V^\partial_2$, we define $s_{v}$ as the seam with $V(s_{v})=\{v,v^{*\#}\}$, oriented from the merge vertex to the split vertex 
    The boundary edges of $\mathrm{vert}_{U,T,S}$ are 
    \begin{align*}
        E^\partial = E(\abs{\inthom(T,U)}^r) \sqcup E(\abs{\inthom(S,T)}^r) \sqcup E(\abs{\inthom(S,U)}).
    \end{align*}
    By Construction \ref{constr:edgeinthom} and Remark~\ref{rem:rAndBend}, we have 
    \begin{align*}
        E^\partial =&\ E(\topcirc{U}) \sqcup E(\topcirc{U^{*\#}}) \sqcup E(\topcirc{T}) \sqcup E(\topcirc{T^{*\#}})  \sqcup E(\topcirc{S}) \sqcup E(\topcirc{S^{*\#}}) \\
        &\sqcup \pi\left(E(\partial U^{*\#})\sqcup E(\partial T)\right) 
        \sqcup  \pi\left(E(\partial T^{*\#})\sqcup E(\partial S)\right) \sqcup 
        \pi\left(E(\partial U)\sqcup E(\partial S^{*\#})\right).
    \end{align*}
    \textbf{Facets:} The facets will be parametrized by $E^\partial$, using the 
    canonical bijections
    \[ E(U)\cong E(U^{*\#}), \quad E(T)\cong E(T^{*\#}), \quad E(S)\cong E(S^{*\#}), \quad e \mapsto e^{*\#}.\]
    For this, consider the equivalence relation generated by the following: two boundary edges $e,e'\in E(\partial S)\sqcup E(\partial T)\sqcup E(\partial U)$ are related if they share a position in $s$ or $t$. For each equivalence class $P$, consider the subset
    \[
            E^{\partial}_P:=\{\overline{e} \,|\, e\in P\}\sqcup \{\overline{e^{*\#}} \,|\, e\in P\}\subset E^\partial,
    \]
    where $\overline{e}\in E^\partial$ (resp.\ $\overline{e^{*\#}}$) is the image of $e$ (resp.\ $e^{*\#}$) under the appropriate surjection $\pi$ from \eqref{eq:surjwebglue}.
    Note that $E^\partial$ is given by the disjoint union of the internal edges of $S,T,U$ and their $*\#$ versions, and the set $\bigsqcup_{P}E^\partial_P$. The facets of $\mathrm{vert}_{U,T,S}$ are given by the following:
    \begin{itemize}
        \item For every (internal) loop $e\in E(\topcirc{U})\sqcup E(\topcirc{T})\sqcup E(\topcirc{S})$, consider its canonically identified partner $e^{*\#}\in E(\topcirc{U^{*\#}})\sqcup E(\topcirc{T^{*\#}})\sqcup E(\topcirc{S^{*\#}})$. We have a facet $f(e)\in F_{0,2}$ with label $\labelfunction(e)$ and boundary words $((e), (e^{*\#}))$.
        \item For every (internal) arc $e\in E(\topcirc{U})\sqcup E(\topcirc{T})\sqcup E(\topcirc{S})$, we have a facet $f(e)\in F_{0,1}$ with label $\labelfunction(e)$ and boundary word given by the cycle formed by $e$, $e^{*\#}$ and the two seams belonging to the vertices of $e$, with orientation inherited from $e$.        

        \item Each equivalence class $P$ corresponds to a facet $f(P)$ according to the following three cases:
        \begin{enumerate}[(i)]
            \item $E^\partial_P$ does not contain any loops, only arcs. Then, we have a facet $f(P)\in F_{0,1}$ with label $\labelfunction(e)$, for any $e\in P$, and boundary word given by the unique minimal cycle composed out of the arcs of $E^\partial_P$ and seams in $E$, with orientation inherited from $\overline{e}$.
         \item $E^\partial_P$ contains only loops and no arcs. Let $E^\partial_P=\{\overline{e_1},\ldots,\overline{e_{|E^\partial_P|}}\}$ be these loops. Then, we have a facet $f(P)\in F_{0,|E^\partial_P|}$ with label $\labelfunction(e_i)$, for any $i$, and boundary words $((\overline{e_1}),\ldots,(\overline{e_{|E^\partial_P|}}))$.
         \item $E^\partial_P$ contains both loops and arcs. Then we have a facet $f(P)\in F_{0,\ell+1}$, where $\ell$ is the number of loops in $E^\partial_P$. The facet has label $\labelfunction(e)$, for any $e\in P$, and boundary words given by the union of the boundary words from the previous two cases.
        \end{enumerate}
       \end{itemize}
    The cyclic ordering of all facets around seams is inherited from the boundary web.
\end{construction}

\begin{definition}\label{def:2homsAndComposition}
Let $s,t$ be objects of $\Foambicat{\labdata}$ and $S,T\colon s \to t$ 1-morphisms in $\Foambicat{\labdata}$. We declare 
\begin{equation}
\label{eqn:verthom}
\Hom_{\Foambicathom{\labdata}{t}{s}}(S,T):= \webeval(\abs{\inthom(S,T)}) 
\end{equation}

If $U\colon s\to t$ is another 1-morphism in $\Foambicat{\labdata}$, then we define the $\kb$-linear \emph{vertical composition} map:
\begin{equation}
\label{eqn:vertcomp}
\vcomp\colon \Hom_{\Foambicathom{\labdata}{t}{s}}(T,U) \otimes \Hom_{\Foambicathom{\labdata}{t}{s}}(S,T) \to \Hom_{\Foambicathom{\labdata}{t}{s}}(S,U)
\end{equation}
to be the map induced by the internal composition $\mathrm{vert}_{U,T,S}$ under the functor $\webeval$.

If $T=S$, we define the element
\begin{equation}
\label{eqn:vertid}
\id_S:=\webeval(\mathrm{cup}_S)(1)\in \Hom_{\Foambicathom{\labdata}{t}{s}}(S,S)
\end{equation}
and call this the \emph{identity foam} on $S$.
\end{definition}

\begin{prop}\label{prop:foamHomcat}
   Let $s,t$ be objects of $\Foambicat{\labdata}$, then the following data defines a small $\kb$-linear category, which we will denote by $\Foambicathom{\labdata}{t}{s}$: 
\begin{itemize}
   \item The objects are the 1-morphisms from $s$ to $t$ in $\Foambicat{\labdata}$ as defined in \Cref{def:foamonemorphisms}.
    \item The $\kb$-module of morphisms $\Hom_{\Foambicathom{\labdata}{t}{s}}(S,T)$ is defined in \eqref{eqn:verthom}.
   \item The composition law for morphisms is given by \eqref{eqn:vertcomp}.
   \item For each 1-morphism $S\colon s\to t$ in $\Foambicat{\labdata}$, the identity morphism $\id_S\in \Hom_{\Foambicathom{\labdata}{t}{s}}(S,S)$ on $S$ is given in \eqref{eqn:vertid}.
\end{itemize}
\end{prop}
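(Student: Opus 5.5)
Since $\webeval$ is a $\kb$-linear functor $\FoamCat[\labdata]\to\kb\Mod$, I reduce every axiom to an equality of morphisms in $\PreFoamCat[\labdata]$ between disjoint unions of the closed webs $\abs{\inthom(-,-)}$, and then apply $\webeval$. Strong monoidality from Lemma~\ref{lem:strong} and Lemma~\ref{lem:A_imply_M} identifies
\[
\webeval(\abs{\inthom(T,U)}\sqcup\abs{\inthom(S,T)})\cong\webeval(\abs{\inthom(T,U)})\otimes_\kb\webeval(\abs{\inthom(S,T)}),
\]
so \eqref{eqn:vertcomp} is well defined and $\kb$-bilinear. It remains to check associativity and unitality. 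Both are statements about composites of the explicit foams from Constructions~\ref{constr:intid} and \ref{constr:intcomp}, so it suffices to exhibit isomorphisms of combinatorial $\labdata$-foams. Decorations cause no trouble: all these structural foams carry decoration $1$.

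\textbf{Associativity.} For $S,T,U,X\colon s\to t$, I compare the two composites
\[
\mathrm{vert}_{X,U,S}\circ(\mathrm{vert}_{X,U,T}\sqcup\id)\quad\text{and}\quad \mathrm{vert}_{X,T,S}\circ(\id\sqcup\mathrm{vert}_{U,T,S}),
\]
both viewed as foams from $\abs{\inthom(U,X)}\sqcup\abs{\inthom(T,U)}\sqcup\abs{\inthom(S,T)}$ to $\abs{\inthom(S,X)}$. Running Construction~\ref{constr:foamcomposition}, I claim each equals a single foam $\mathrm{vert}_{X,U,T,S}$ described in the same way as Construction~\ref{constr:intcomp}:
\begin{itemize}
\item It has no internal vertices.
\item It has one seam $s_v$ for each vertex $v$ of $X,U,T,S$. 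The new arcs from the gluing step are paths $v\to v^{*\#}$ through a boundary vertex of the intermediate web, so they concatenate to the expected seams.
\item Facets are indexed by the internal edges of $X,U,T,S$, together with the equivalence classes $P$ of boundary edges of $X,U,T,S$ sharing positions in $s$ or $t$.
\end{itemize}

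\textbf{Main obstacle.} The key step is the genus and boundary bookkeeping. I must check that every glued facet $[f]$ has genus $0$ and the correct boundary words. Using Remark~\ref{rem:Euler}, I would verify that the gluing pattern inside each class $P$ is a tree of disks and annuli glued along single intervals or circles, so $\chi$ comes out right and $g([f])=0$. I also need the cases (i)--(iii) of Construction~\ref{constr:intcomp} to match after grafting the boundary words. For this I split into cases according to whether the strands in $P$ are arcs or loops. I expect this case analysis to be the most tedious part.

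\textbf{Unitality.} I show $\mathrm{vert}_{T,S,S}\circ(\id\sqcup\mathrm{cup}_S)=\id_{\abs{\inthom(S,T)}}$ in $\PreFoamCat[\labdata]$, and symmetrically on the other side. Gluing $\mathrm{cup}_S$ caps off the $\abs{\inthom(S,S)}$ boundary component. Each seam $s_v$ of $\mathrm{cup}_S$ joins with a seam of $\mathrm{vert}$ to give a seam $v\to v^{*\#}$ of the identity foam from Definition~\ref{def:idcombfoam}. Each facet becomes a disk $f_e$ attached along $(e,e_{v'},e^r,e_v)$, or an annulus in the case of loops. This again follows from the genus formula: gluing a disk to a disk or annulus along an interval or circle preserves the type. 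Finally, applying $\webeval$ and using $\webeval(\mathrm{cup}_S)(1)=\id_S$ gives $\id_T\vcomp f=f=f\vcomp\id_S$, which completes the verification.
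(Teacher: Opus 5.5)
Your overall strategy is the paper's: reduce unitality and associativity to equalities of composites of $\mathrm{cup}_S$ and the foams $\mathrm{vert}$ in $\PreFoamCat[\labdata]$, then apply $\webeval$.

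Two minor points first. Your unitality argument is fine: the class facets of $\mathrm{vert}_{T,S,S}$ are planar surfaces with several boundary circles rather than disks or annuli, but capping them with the disks and annuli of $\mathrm{cup}_S$ is a tree-like gluing, so the genus stays $0$. In your two associativity composites the outer maps are swapped: $\mathrm{vert}_{X,T,S}$ must follow $\mathrm{vert}_{X,U,T}\sqcup\id$, and $\mathrm{vert}_{X,U,S}$ must follow $\id\sqcup\mathrm{vert}_{U,T,S}$.

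\textbf{The gap.} It is the step you single out. There is no genus-zero normal form $\mathrm{vert}_{X,U,T,S}$, because glued facets do acquire genus. Take the nilvalent case with $s=\emptyset$ and $t=(\hat{x},\widecheck{x},\hat{x},\widecheck{x})$. Let $X=T$ be the web with turnbacks $\{12,34\}$ and $U=S$ the web with turnbacks $\{23,14\}$. Then $\abs{\inthom(T,X)}$ consists of two loops, and the other internal homs in this bracketing have one loop each. So $\mathrm{vert}_{X,U,T}$ and $\mathrm{vert}_{X,T,S}$ each have a single facet in $F_{0,4}$, and these two facets are glued along \emph{both} loops of $\abs{\inthom(T,X)}$. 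The annulus of $\id_{\abs{\inthom(S,T)}}$ is glued along a third loop. Construction~\ref{constr:foamcomposition} gives $g_+=3$ and $g([f])=3+1-2-1=1$. The other bracketing also yields a single facet in $F_{1,4}$. So associativity holds here, but the gluing pattern is not a tree and the check $g([f])=0$ you plan cannot succeed. Filling the three source circles with saddles recovers the familiar genus-one surface obtained by stacking three saddles $B\to A\to B\to A$.

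\textbf{What is needed instead.} You must compute, class by class, the genus of the glued facet in both bracketings and compare them, e.g.\ via Remark~\ref{rem:Euler}. Doing so exposes a further point that neither your plan nor the paper's proof addresses; the paper simply asserts that the facet sets agree. The prescription in Construction~\ref{constr:intcomp} that every class facet $f(P)$ has genus $0$ is incompatible with associativity.

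Take $t$ to be six alternating points and the turnback-only webs $X=\{14,23,56\}$, $U=\{12,36,45\}$, $T=\{16,25,34\}$, $S=\{12,34,56\}$. Each pair among $X,U,T$ closes up to a single loop, so $\mathrm{vert}_{X,U,T}$ is a single facet in $F_{0,3}$. The composite through $\mathrm{vert}_{X,U,T}$ then has a facet in $F_{0,6}$, whereas the composite through $\mathrm{vert}_{U,T,S}$ has a facet in $F_{1,6}$. For $A=\Z[x]/(x^2)$ these induce different maps.

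The remedy is to give $f(P)$ the genus determined by the Euler characteristic of the corresponding surface in the cube:
$\chi(f(P))=\#\{\text{positions in }P\}-\#\{\text{turnbacks and passing edges in }P\}$.
In the example this is $6-9=-3$, i.e.\ genus $1$. With this normalization, in either bracketing the glued facet of a class has Euler characteristic equal to the number of positions minus the number of two-position boundary edges of $S,T,U,X$ in that class. Both bracketings also have the same boundary words, hence the same genus. This computation is the actual content of associativity, and it is what your proposal would have to supply.
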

\begin{proof}
We only need to show unitality and associativity of vertical composition. 
For unitality, let $s$ and $t$ be objects and $S,U\colon s\to t$ be 1-morphisms. The following equality of combinatorial $\labdata$-foams holds
    \begin{align}\label{eq:unitVertComp}
        \mathrm{vert}_{U,S,S} \circ (\id_{\abs{\inthom(S,U)}}\sqcup \mathrm{cup}_{S}) = \id_{\abs{\inthom(S,U)}}\colon \abs{\inthom(S,U)}\to \abs{\inthom(S,U)}.
    \end{align}

    To show the equality, we need to provide bijections between the sets of vertices, edges, seams, and facets, which are consistent with orientations, cyclic orderings and labels.

    For example, for the set of seams, the above equality can be verified as follows. The seams of $\mathrm{vert}_{U,S,S}$ and $\mathrm{cup}_S$ are given respectively by
    \[
        E_{\mathrm{vert}_{U,S,S}}=\{s'_v\}_{v\in V(U)}\sqcup \{s_v\}_{v\in V(S)} \sqcup \{s_v\}_{v\in V(S)}, \quad
        E_{\mathrm{cup}_{S}}=\{s_v\}_{v\in V(S)}.
    \]
    The composition of combinatorial $\labdata$-foams from Construction \ref{constr:labfoamcomposition} joins the three seams $s_v$ belonging to the same vertex $v\in V(S)$. Therefore, the sets of seams of the two sides of \eqref{eq:unitVertComp} agree.
    The equality of the sets of facets can be proved similarly, and it can be shown to be compatible with the orientation, cyclic ordering and labels.
    As a consequence, for any $\Sigma\in \Hom_{\Foambicathom{\labdata}{t}{s}}(S,U)$, we have
    \begin{align*}
        \webeval(\mathrm{vert}_{U,S,S})(\Sigma\otimes \id_S) = \Sigma \in \Hom_{\Foambicathom{\labdata}{t}{s}}(S,U).
    \end{align*}
    Unitality on the left is similar.

    For associativity, one has to check that for all $S,T,U,V\colon s\to t$, as combinatorial $\labdata$-foams, we have
    \[
        \mathrm{vert}_{V,T,S}\circ\mathrm{vert}_{V,U,T}=\mathrm{vert}_{V,U,S}\circ\mathrm{vert}_{U,T,S},
    \]
    that is, the sets of vertices, seams and facets agree, as done for the unitality above. Then, associativity follows by applying $\webeval$. 
\end{proof}

\subsection{Horizontal composition}\label{sec:horcomp}

\begin{construction}[Vertices and edges of composed webs]
\label{constr:edgesOfComp}
    Let $S\colon s\to t$ and $S'\colon t\to u$ be 1-morphisms in $\Foambicat{\labdata}$.
    We describe the sets of vertices and edges of $S'\hcompw S$ in terms of those of $S'$ and $S$.
    For the vertices, we have a bijection
    \[
        V(S'\hcompw S)\cong V(S')\sqcup V(S).
    \]
    For the edges, there is a surjection
    \begin{equation}
        \rho\colon E(S')\sqcup E(S) \rightarrow E(S'\hcompw S)
    \end{equation}
    which is injective on $E(\topcirc{S'})$ and $E(\topcirc{S})$, and sends $E(\partial S') \sqcup E(\partial S)$ to the set of equivalence classes with respect to the equivalence relation generated by the relation: Two edges $e'\in E(\partial S')$ and $e\in E(\partial S)$ are related if they share a position in $t$. 
\end{construction}

\begin{construction}[Internal horizontal composition]
\label{constr:intHorComp}
    Let $s,t,u$ be objects in $\Foambicat{\labdata}$ and $S,T\colon s\to t$ and $S',T'\colon t\to u$ be 1-morphisms in $\Foambicat{\labdata}$. In the following, we construct a combinatorial $\labdata$-foam
    \[
    \mathrm{hor}_{T,S}^{T',S'} \colon \abs{\inthom(S',T')} \sqcup \abs{\inthom(S,T)} \to \abs{\inthom(S'\hcompw S, T'\hcompw T)},
    \]
    where the underlying internal homs in the source and target are depicted as:
    \[
    \internalhomObj{S'}{T'}{t}{u} ,\quad \internalhomObj{S}{T}{s}{t} \quad\quad \text{and} \quad\quad \internalhomObjWide{(S'\hcompw S)}{T'\hcompw T}{s}{u}
    \]
    \textbf{Vertices:} There are no internal vertices. 
    By Remark~\ref{rem:rAndBend} and Constructions \ref{constr:edgeinthom} and \ref{constr:edgesOfComp}, the boundary vertices of $\mathrm{hor}_{T,S}^{T',S'}$ are partitioned as
    \[
    V^\partial = V(T') \sqcup V({T'}^{*\#}) \sqcup V(T) \sqcup V({T}^{*\#}) \sqcup V(S') \sqcup V({S'}^{*\#}) \sqcup V(S) \sqcup V(S^{*\#}).
    \]
    \textbf{Seams and edges:} 
    There are no loop seams. The arc seams are given by
    \[
    E = E_{\mathrm{arc}} = \{s_v\}_{v\in  V(T') \sqcup V(T) \sqcup V(S') \sqcup V(S)}
    \] 
    where, for each merge/split vertex $v\in V^\partial_2$, we define $s_{v}$ as the seam with $V(s_{v})=\{v,v^{*\#}\}$, oriented from the merge vertex to the split vertex. 
    By Constructions~\ref{constr:edgeinthom} and \ref{constr:edgesOfComp}, and Remark~\ref{rem:rAndBend},
    the set of boundary edges is given by 
    \begin{align*}
        E^\partial =&\ E(\abs{\inthom(S',T')}^r) \sqcup E(\abs{\inthom(S,T)}^r) \sqcup E(\abs{\inthom(S'\hcompw S, T'\hcompw T)}) \\
        =&\ E(\topcirc{T'}) \sqcup E(\topcirc{T'^{*\#}}) \sqcup E(\topcirc{T}) \sqcup E(\topcirc{T^{*\#}})\sqcup E(\topcirc{S'})\sqcup E(\topcirc{S'^{*\#}})\sqcup E(\topcirc{S})  \sqcup E(\topcirc{S^{*\#}}) \\
        & \sqcup \pi\left(E(\partial T'^{*\#})\sqcup E(\partial S')\right) \sqcup \pi\left(E(\partial T^{*\#})\sqcup E(\partial S)\right) \\
        & \sqcup 
        \pi\left(\rho(E(\partial T')\sqcup E(\partial T)) \sqcup \rho(E(\partial S'^{*\#})\sqcup E(\partial S^{*\#}))\right).
    \end{align*}
    \textbf{Facets:} The facets will be parametrized by $E^\partial$.
    For this, consider the equivalence relation generated by the following: two boundary edges $e,e'\in E(\partial S)\sqcup E(\partial S')\sqcup E(\partial T)\sqcup E(\partial T')$ are related if they share a position in $s$, $t$ or $u$. For each equivalence class $P$, consider the subset
    \[
            E^{\partial}_P:=\{\overline{e} \,|\, e\in P\}\sqcup \{\overline{e^{*\#}} \,|\, e\in P\}\subset E^\partial,
    \]
    where $\overline{e}\in E^\partial$ (resp.\ $\overline{e^{*\#}}$) is the image of $e$ (resp.\ $e^{*\#}$) under the appropriate surjection $\pi$ from \eqref{eq:surjwebglue}.
    
    Note that $E^\partial$ is given by the disjoint union of the internal edges of $S,S',T,T'$ and their $*\#$ versions, and the set $\bigsqcup_{P}E^\partial_P$. The facets of $\mathrm{hor}_{T,S}^{T',S'}$ are given by the following:
    \begin{itemize}
        \item For every loop $e\in E(\topcirc{T'})\sqcup E(\topcirc{T}) \sqcup E(\topcirc{S'})\sqcup E(\topcirc{S})$ there is a canonically identified partner $e^{*\#}\in E(\topcirc{T'^{*\#}})\sqcup E(\topcirc{T^{*\#}}) \sqcup E(\topcirc{S'^{*\#}})\sqcup E(\topcirc{S^{*\#}})$. We have a facet $f(e)\in F_{0,2}$ with label  $\labelfunction(e)=\labelfunction(e^{*\#})$ and boundary words $((e),(e^{*\#}))$.
        \item For every arc $e\in E(\topcirc{T'})\sqcup E(\topcirc{T}) \sqcup E(\topcirc{S'})\sqcup E(\topcirc{S})$ there is a canonically identified partner $e^{*\#}\in E(\topcirc{T'^{*\#}})\sqcup E(\topcirc{T^{*\#}}) \sqcup E(\topcirc{S'^{*\#}})\sqcup E(\topcirc{S^{*\#}})$. We have a facet $f(e)\in F_{0,1}$ with label $\labelfunction(e)=\labelfunction(e^{*\#})$. Its boundary word is the cycle composed out of $e$, $e^{*\#}$ and the two seams belonging to the vertices of $e$, with orientation inherited from $e$.
    \item Each equivalence class $P$ corresponds to a facet $f(P)$ according to the following three cases:
    \begin{enumerate}[(i)]
            \item $E^\partial_P$ contains only arcs and no loops. Then, we have a facet $f(P)\in F_{0,1}$ with label $\labelfunction(e)$, for any $e\in P$, and boundary word given by the unique minimal cycle composed out of the arcs of $E^\partial_P$ and seams in $E$, with orientation inherited from $\overline{e}$.
         \item $E^{\partial}_P$ contains only loops and no arcs. Let $E^\partial_P=\{\overline{e_1},\ldots,\overline{e_{|E^\partial_P|}}\}$ be the loops. Then, we have a facet $f(P)\in F_{0,|E^\partial_P|}$ with label $\labelfunction(e_i)$, for any $i$, and boundary words $((\overline{e_1}),\ldots,(\overline{e_{|E^\partial_P|}}))$.
         \item $E^{\partial}_P$ contains both loops and arcs. Then we have a facet $f(P)\in F_{0,\ell+1}$, where $\ell$ is the number of loops in $E^\partial_P$. The facet has label $\labelfunction(e)$, for any $e\in P$, and boundary words given by the union of the boundary words from the previous two cases.
        \end{enumerate}
    \end{itemize}
    The cyclic ordering of all facets around seams is inherited from the boundary web.
\end{construction}

\begin{definition}\label{def:horizontalComposition}
    Let $s,t,u$ be objects in $\Foambicat{\labdata}$. Define the $\kb$-linear \emph{horizontal composition} 
    \begin{equation}\label{eq:horcomp}
        \circ_h\colon \Hom_{\Foambicathom{\labdata}{u}{t}}(S',T') \otimes \Hom_{\Foambicathom{\labdata}{t}{s}}(S,T) \to \Hom_{\Foambicathom{\labdata}{u}{s}}(S'\circ S, T'\circ T)
    \end{equation}
    as the map induced by the internal horizontal composition map $\mathrm{hor}_{T,S}^{T',S'}$ under the functor $\webeval$.
\end{definition}

\begin{lemma}[Interchange law]\label{lem:interchangelaw}
    Let $s,t,u$ be objects and $S,T,U\colon s\to t$ and $S',T',U'\colon t\to u$ be 1-morphisms in $\Foambicat{\labdata}$. Consider 2-morphisms $\Sigma\colon S\to T$, $\Theta \colon T\to U$, $\Sigma'\colon S'\to T'$, and $\Theta'\colon T'\to U'$. Then,
    \begin{align}\label{eq:interchangeLaw}
        (\Theta'\circ_h \Theta) \circ_v (\Sigma'\circ_h \Sigma) = (\Theta'\circ_v \Sigma') \circ_h (\Theta\circ_v\Sigma).
    \end{align}
\end{lemma}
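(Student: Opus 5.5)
The plan is to reduce \eqref{eq:interchangeLaw} to an equality of combinatorial $\labdata$-foams, as in the proof of associativity in Proposition~\ref{prop:foamHomcat}. Since $\circ_v$ and $\circ_h$ are defined as the maps induced under the symmetric monoidal functor $\webeval$ by $\mathrm{vert}$ and $\mathrm{hor}$, functoriality and strong monoidality of $\webeval$ (Lemma~\ref{lem:strong}) identify the left-hand side with the image of $\Theta'\otimes\Theta\otimes\Sigma'\otimes\Sigma$ under $\webeval$ applied to
\[
\mathrm{vert}_{U'\circ U,\,T'\circ T,\,S'\circ S}\circ\bigl(\mathrm{hor}^{U',T'}_{U,T}\sqcup \mathrm{hor}^{T',S'}_{T,S}\bigr).
\]
The right-hand side is the image of $\Theta'\otimes\Sigma'\otimes\Theta\otimes\Sigma$ under $\webeval$ applied to
\[
\mathrm{hor}^{U',S'}_{U,S}\circ\bigl(\mathrm{vert}_{U',T',S'}\sqcup\mathrm{vert}_{U,T,S}\bigr).
\]
Both composites are foams from $\abs{\inthom(T',U')}\sqcup\abs{\inthom(T,U)}\sqcup\abs{\inthom(S',T')}\sqcup\abs{\inthom(S,T)}$ to $\abs{\inthom(S'\circ S,U'\circ U)}$. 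The second one must first be precomposed with the symmetry swapping the two middle factors. Since $\webeval$ is symmetric monoidal, this swap matches the reordering of tensor factors. It therefore suffices to prove that these two combinatorial $\labdata$-foams coincide, i.e.\ to exhibit bijections of vertices, seams, facets and boundary data compatible with orientations, cyclic orderings, labels, genera and decorations.

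I would compare the ingredients one at a time using Construction~\ref{constr:foamcomposition}.

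\textbf{Vertices.} Neither composite has internal vertices, since none of $\mathrm{vert}$ and $\mathrm{hor}$ do.

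\textbf{Seams.} On both sides, for each web vertex $v$ in $V(U')\sqcup V(U)\sqcup V(T')\sqcup V(T)\sqcup V(S')\sqcup V(S)$, the gluing concatenates the seams $s_v$ through the intermediate boundary vertices into a single arc. For vertices of $U,U',S,S'$ this arc joins $v$ to $v^{*\#}$ in the target web. For vertices of $T,T'$ it runs between the two source copies. Hence the seam sets agree canonically.

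\textbf{Decorations.} All structural foams carry decoration $1$, so the composed decorations are $1$ on both sides.

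The remaining work is to match the facets. For internal edges of $S,T,U,S',T',U'$ the facets are the obvious bigons or annuli $f(e)$ on both sides, glued with the corresponding $f(e)$ from the target. These match by inspection. For boundary edges, I would show that on both sides the glued facets are indexed by the classes of a single equivalence relation on $E(\partial S)\sqcup E(\partial T)\sqcup E(\partial U)\sqcup E(\partial S')\sqcup E(\partial T')\sqcup E(\partial U')$, namely the relation of sharing a position in $s,t$ or $u$. Each side is built by first gluing along positions in $t$ (inside $\mathrm{hor}$) and along the intermediate webs (inside $\mathrm{vert}$), and the composition of foams takes the transitive closure. Both orders therefore generate the same relation. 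The boundary words of each glued facet are then determined by its boundary edges in the source and target webs together with the seams, so they agree.

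The main obstacle is the genus bookkeeping. I must check that every facet on both sides has genus zero, or more precisely the same genus, using the formula for $g([f])$ in Construction~\ref{constr:foamcomposition}. I would do this via the Euler characteristic check of Remark~\ref{rem:Euler}. A facet is determined by its class $P$, its boundary components and its genus, and $\chi$ of a glued facet is computed additively from the pieces and the interval gluings. So I would show that both sides assemble the facet over $P$ from genus-zero pieces along a gluing pattern that is a tree. Each piece is a disk or a planar surface attached along loops or single arcs, so no handles are created. Concretely, in each composite every new gluing either merges two distinct components or closes a loop corresponding to an honest boundary loop of the target. This forces genus $0$ and the same number of boundary components on both sides. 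The cyclic orderings are inherited from the boundary webs in both constructions and therefore agree.
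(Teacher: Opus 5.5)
Your reduction is the same as the paper's. Its proof simply asserts the foam identity $\mathrm{vert}_{U'\circ U,T'\circ T,S'\circ S}\circ(\mathrm{hor}^{U',T'}_{U,T}\sqcup \mathrm{hor}^{T',S'}_{T,S})=\mathrm{hor}^{U',S'}_{U,S}\circ(\mathrm{vert}_{U',T',S'}\sqcup\mathrm{vert}_{U,T,S})$ and applies $\webeval$. Your handling of the middle swap, vertices, seams and decorations is fine, and more careful than the paper's one line. The gap is in the step you flag as the main obstacle. The claim that each composite facet is assembled along a tree, and so has genus $0$, is false. A gluing can close a cycle in the gluing graph without creating a new target boundary circle, and then it adds a handle.

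Here is a counterexample. Take $s=u=\emptyset$ and $t=(\hat{x},\widecheck{x},\hat{x},\widecheck{x})$. Let $S=U$ be the nested pair of cups, with turnbacks at positions $\{1,4\}$ and $\{2,3\}$. Let $T$ be the side-by-side cups, with turnbacks $\{1,2\}$ and $\{3,4\}$, and let $S'=T'=U'$ be the side-by-side caps. All boundary edges then lie in a single class.

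On the left, $\mathrm{hor}^{T',S'}_{T,S}$ and $\mathrm{hor}^{U',T'}_{U,T}$ each have a single facet in $F_{0,6}$. Each of these is glued to both annuli of $\mathrm{vert}_{U'\circ U,T'\circ T,S'\circ S}$ coming from the two loops of $T'\circ T$, which is a $4$-cycle. The genus formula of Construction~\ref{constr:foamcomposition} gives $g=6+1-2-4=1$.

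On the right, the single facet of $\mathrm{hor}^{U',S'}_{U,S}$ is glued to the single facet of $\mathrm{vert}_{U,T,S}$ along both loops of $\abs{\inthom(S,U)}$. Again $g=4+1-3-1=1$.

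This is forced topologically. Composing the saddles $S\Rightarrow T\Rightarrow U$ horizontally with identities on the caps gives a pair of pants from one circle to two circles, followed by one back to a circle. So the identity still holds, but your argument would ``prove'' genus $0$ for a genus-one facet.

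What you need is to \emph{match} the genera rather than show they vanish. Since the boundary words agree, it suffices that the two glued facets have the same Euler characteristic, computed additively as in Remark~\ref{rem:Euler}. One way to do this is to check that on each side it equals the number of positions of $s,t,u$ met by the class, minus $\sum_e(n_\partial(e)-1)$ over the boundary edges $e$ in the class. In the example this gives $4-12=-8$, i.e.\ genus $1$ with $8$ boundary circles.
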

\begin{proof}
    For all objects $s,t,u$ in $\Foambicat{\labdata}$, and 1-morphisms $S,T,U\colon s\to t$ and $S',T',U'\colon t\to u$, one can check that the following equality of combinatorial $\labdata$-foams holds
    \[
        \mathrm{vert}_{U'\circ U,T'\circ T,S'\circ S}\circ \left( \mathrm{hor}_{U,T}^{U',T'} \sqcup \mathrm{hor}_{T,S}^{T',S'} \right) 
        =
        \mathrm{hor}_{U,S}^{U',S'}\circ \left( \mathrm{vert}_{U',T',S'} \sqcup \mathrm{vert}_{U,T,S} \right).
    \]
    Hence, the combinatorial $\labdata$-foams underlying both sides of \eqref{eq:interchangeLaw} are the same. Applying $\webeval$ yields the statement.
\end{proof}

\begin{thm}\label{thm:Foam2Cat}
    The above constructions endow $\Foambicat{\labdata}$ with the structure of a locally small, locally $\kb$-linear (strict) 2-category:
    \begin{itemize}
        \item The objects are given in Definition~\ref{def:obj2cat}.
        \item The $\kb$-linear category $\Foambicathom{\labdata}{t}{s}$ of 1-morphisms $s\to t$ is defined in Proposition~\ref{prop:foamHomcat}.
        \item Given objects $s,t,u$, the $\kb$-linear horizontal composition functor
        \begin{align*}
            \circ_h\colon \Foambicathom{\labdata}{u}{t} \otimes \Foambicathom{\labdata}{t}{s} \to \Foambicathom{\labdata}{u}{s}
        \end{align*}
        is given by the composition of 1-morphisms of $\Foambicat{\labdata}$ from Definition~\ref{def:foamonemorphisms}, on the level of objects. On the level of morphisms (i.e.\ 2-morphisms of $\Foambicat{\labdata}$), it is given by the $\kb$-linear horizontal composition map constructed in Definition~\ref{def:horizontalComposition}.
    \end{itemize}
\end{thm}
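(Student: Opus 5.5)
To establish Theorem~\ref{thm:Foam2Cat}, I would check the axioms of a strict 2-category in the form ``locally $\kb$-linear category together with strictly associative, strictly unital $\kb$-linear horizontal composition functors.'' Several ingredients are already in place. Proposition~\ref{prop:foamHomcat} shows that each $\Foambicathom{\labdata}{t}{s}$ is a $\kb$-linear category. Definition~\ref{def:horizontalComposition} gives a $\kb$-linear map on 2-morphisms. Lemma~\ref{lem:interchangelaw} gives the interchange law. On 1-morphisms, $\hcompw$ is concatenation of sequences, so it is strictly associative and unital with units $\id_s$ (the empty sequences). What remains is: (a) $\circ_h$ is a functor, meaning it preserves identities in addition to interchange; (b) $\circ_h$ is strictly associative on 2-morphisms; (c) the identity 2-morphism $\id_{\id_s}$ is a strict two-sided unit for $\circ_h$.

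Every remaining verification follows the same strategy used in Proposition~\ref{prop:foamHomcat} and Lemma~\ref{lem:interchangelaw}. One exhibits an equality of combinatorial $\labdata$-foams, via bijections of vertices, seams and facets compatible with labels, orientations, cyclic orderings, genus and boundary words. One then applies the symmetric monoidal functor $\webeval$, together with the fact that $\sqcup$ is sent to $\otimes$ (Lemma~\ref{lem:strong}).

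\textbf{Identities.} For (a), I would show
\[
\mathrm{hor}^{S',S'}_{S,S}\circ(\mathrm{cup}_{S'}\sqcup \mathrm{cup}_S)=\mathrm{cup}_{S'\hcompw S},
\]
so that $\id_{S'}\circ_h\id_S=\id_{S'\hcompw S}$. On seams this is immediate: both sides have one seam $s_v$ for each $v\in V(S')\sqcup V(S)=V(S'\hcompw S)$. For facets, the facets of $\mathrm{cup}$ indexed by internal edges of $S$ or $S'$ glue one-to-one. The facets $f(e)$ for boundary edges $e$ glue, through the classes $P$ of Construction~\ref{constr:intHorComp}, into the facet $f(\rho(e))$ of $\mathrm{cup}_{S'\hcompw S}$.

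\textbf{Associativity.} For (b), take composable $S,T\colon s\to t$, $S',T'\colon t\to u$ and $S'',T''\colon u\to v$. I would show
\[
\mathrm{hor}^{T'',S''}_{T'\hcompw T,S'\hcompw S}\circ(\id\sqcup \mathrm{hor}^{T',S'}_{T,S})
=\mathrm{hor}^{T''\hcompw T',S''\hcompw S'}_{T,S}\circ(\mathrm{hor}^{T'',S''}_{T',S'}\sqcup\id).
\]
Both sides should be identified with a single ``triple horizontal composition'' foam. Its seams are $s_v$ for $v$ in the vertex sets of all six webs. Its facets are indexed by internal edges and by the classes of boundary edges under ``sharing a position in $s,t,u,v$.''

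\textbf{Units.} For (c), I would use $\mathrm{cup}_{\id_t}$. Since $\id_t$ is the empty sequence, $\abs{\inthom(\id_t,\id_t)}$ consists only of loops, one for each entry of $t$. The foam $\mathrm{cup}_{\id_t}$ is therefore a union of disks. I would show
\[
\mathrm{hor}^{\id_t,\id_t}_{T,S}\circ(\mathrm{cup}_{\id_t}\sqcup\id)=\id_{\abs{\inthom(S,T)}},
\]
and the analogous identity on the other side.

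\textbf{Main obstacle.} I expect the hardest part to be the facet bookkeeping in (b) and (c). One must check that the gluing algorithm of Construction~\ref{constr:foamcomposition}, with steps (a), (b1), (b2) and (c), produces the correct genus and number of boundary components. In particular, gluing disks and annuli along loops and arcs of the middle web must not create spurious genus. The equivalence classes $P$ on the two sides of (b) coincide, because the relation ``sharing a boundary position'' is generated in either order. The Euler characteristic count of Remark~\ref{rem:Euler} then shows that each glued facet is a genus-zero surface with the expected boundary words. This reduces the check to a class-by-class comparison in the three cases (i)--(iii) of Construction~\ref{constr:intHorComp}, and I would carry out exactly that comparison.
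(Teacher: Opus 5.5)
Your proposal is correct and matches the paper's proof: the paper also gets compatibility with $\circ_v$ from the interchange law, then reduces unitality (gluing $\mathrm{cup}_{\id_t}$ into $\mathrm{hor}$) and associativity (comparing the two iterated $\mathrm{hor}$ composites) to identities of combinatorial $\labdata$-foams, and applies the evaluation functor. Your extra check that $\mathrm{hor}^{S',S'}_{S,S}\circ(\mathrm{cup}_{S'}\sqcup\mathrm{cup}_S)=\mathrm{cup}_{S'\circ S}$, i.e.\ that $\circ_h$ preserves identity 2-morphisms, is needed for functoriality and is left implicit in the paper, so your version is slightly more complete.
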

\begin{proof}
    The functoriality of horizontal composition functor $\circ_h$ under vertically composing 2-morphisms is the interchange law from Lemma~\ref{lem:interchangelaw}.
    For unitality of the horizontal composition law, let $s=t$ and $u$ be objects and $S',T'\colon t\to u$ be 1-morphisms. The following equality of combinatorial $\labdata$-foams holds
    \begin{align*}
        \mathrm{hor}_{\id_s,\id_s}^{T',S'} \circ (\id_{\abs{\inthom(S',T')}}\sqcup \mathrm{cup}_{\id_s}) = \id_{\abs{\inthom(S',T')}}\colon \abs{\inthom(S',T')}\to \abs{\inthom(S',T')}.
    \end{align*}
    Indeed, composing with $\mathrm{cup}_{\id_s}$ reduces the number of boundary components of the facets of $\mathrm{hor}_{\id_s,\id_s}^{T',S'}$ to match with the identity $\labdata$-foam $\id_{\abs{\inthom(S',T')}}$. As a consequence, for $\Sigma\in \Hom_{\Foambicathom{\labdata}{u}{t}}(S',T')$, we have
    \begin{align*}
        \webeval(\mathrm{hor}_{\id_s,\id_s}^{T',S'})(\Sigma\otimes \id_{\id_s}) = \Sigma \in \Hom_{\Foambicathom{\labdata}{u}{t}}(S',T').
    \end{align*}
    Unitality on the left is similar. For the associativity, one has to check that for all $S,T\colon s\to t$, $S',T'\colon t\to u$ and $S'',T''\colon u\to v$, we have
    \[
        \mathrm{hor}_{T,S}^{T''\circ T',S''\circ S'}\circ\mathrm{hor}_{T',S'}^{T'',S''}=\mathrm{hor}_{T'\circ T,S'\circ S}^{T'',S''}\circ\mathrm{hor}_{T,S}^{T',S'}
    \]
    as combinatorial $\labdata$-foams. Then, associativity follows by applying $\webeval$. 
\end{proof}

\subsection{Tensor product}
\label{sec:tencomp}
We follow Lemma 4 \cite{BaezNeuchlHDAI} to endow the 2-category $\Foambicat{\labdata}$ with a semistrict monoidal structure. 
\begin{construction}[Tensoring 2-functors]\label{constr:tensoringWithObj}
    We extend the operation $\boxtimes$, previously defined on objects, to (strict) 2-functors $x\boxtimes -$ and $-\boxtimes x$ for every object $x$ in $\Foambicat{\labdata}$. We prove the 2-functoriality properties in Theorem \ref{thm:semistrictMonStruct} below. Let $W\colon s\to t$ be a 1-morphism in $\Foambicat{\labdata}$ given by the sequence $W=(W_k,...,W_1)$. We define 1-morphisms
    \begin{align*}
       x\boxtimes W\colon x\boxtimes s\to x\boxtimes t \quad \text{ by } \quad x\boxtimes W &:= (x\boxtimes W_k,\dots, x\boxtimes W_1),\\
        W\boxtimes x\colon s\boxtimes x\to t\boxtimes x \quad \text{ by } \quad W\boxtimes x &:= (W_k\boxtimes x,\dots, W_1\boxtimes x),
    \end{align*}
    where $x\boxtimes W_i = x\boxtimes (o_l\boxtimes W'_i \boxtimes o_r) := ((x\boxtimes o_l)\boxtimes W'_i \boxtimes o_r) $ and $W_i\boxtimes x = (o_l\boxtimes W'_i \boxtimes o_r)\boxtimes x := (o_l\boxtimes W'_i \boxtimes (o_r\boxtimes x))$ for appropriate generating webs $W'_i$.
\smallskip 

    We now define the action of $x\boxtimes -$ and $-\boxtimes x$ on 2-morphisms. Let $s,t$ be objects of $\Foambicat{\labdata}$ and $S,T\colon s\to t$ be 1-morphisms in $\Foambicat{\labdata}$.
    For every object $x\in \Foambicat{\labdata}$, consider the combinatorial $\labdata$-foam 
    \[
        \id_{\abs{\inthom(S,T)}}\sqcup \mathrm{cup}_{\id_x}\colon \abs{\inthom(S,T)}\to \abs{\inthom(S,T)}\sqcup \abs{\inthom(\id_x,\id_x)}
    \]
    and observe that the following equality of combinatorial $\labdata$-webs holds
    \begin{equation*}\label{eq:combMonWebs}
        \abs{\inthom(x\boxtimes S,x\boxtimes T)} =
        \abs{\inthom(S,T)}\sqcup \abs{\inthom(\id_x,\id_x)}  = 
        \abs{\inthom(S\boxtimes x,T\boxtimes x)}.
    \end{equation*}
    The internal homs underlying the rightmost and leftmost webs in the equality above are represented below:
    \[
        \xMon{S}{T} ,
        \qquad
        \Monx{S}{T}.
    \]

    \noindent Let $\Sigma\in\Hom_{\Foambicathom{\labdata}{t}{s}}(S,T)$ be a 2-morphism. We define $x\boxtimes \Sigma$ as the image of $\Sigma$ under the map 
    \begin{align*}
        \Hom_{\Foambicathom{\labdata}{t}{s}}(S,T)\to \Hom_{\Foambicathom{\labdata}{x\boxtimes t}{x\boxtimes s}}(x\boxtimes S,x\boxtimes T)
    \end{align*}
    induced by the foam $\id_{\abs{\inthom(S,T)}}\sqcup \mathrm{cup}_{\id_x}$, seen as a foam 
    \begin{align*}
        \abs{\inthom(S,T)}\to \abs{\inthom(x\boxtimes S,x\boxtimes T)}
    \end{align*}
    under the functor $\webeval$. Similarly, we define $\Sigma\boxtimes x$ as the image of $\Sigma$ under the map 
    \begin{align*}
        \Hom_{\Foambicathom{\labdata}{t}{s}}(S,T)\to \Hom_{\Foambicathom{\labdata}{t\boxtimes x}{s\boxtimes x}}(S\boxtimes x,T\boxtimes x)
    \end{align*}
    induced by the foam $\id_{\abs{\inthom(S,T)}}\sqcup \mathrm{cup}_{\id_x}$
    under the functor $\webeval$. 
\end{construction}

\begin{lemma}\label{lem:tensAndSqcup}
    Let $S,T,U\colon s\to t$ and $S',T'\colon t\to u$ be 1-morphisms in $\Foambicat{\labdata}$, 
    and consider 2-morphisms $\Sigma\in\Hom_{\Foambicathom{\labdata}{t}{s}}(S,T)$, $\Theta\in\Hom_{\Foambicathom{\labdata}{t}{s}}(T,U)$ and $\Sigma'\in\Hom_{\Foambicathom{\labdata}{u}{t}}(S',T')$. 
    For all objects $x\in\Foambicat{\labdata}$, the following equalities of combinatorial $\labdata$-foams hold
    \begin{gather}
        \mathrm{vert}_{x\boxtimes U,x\boxtimes T,x\boxtimes S} = \mathrm{vert}_{U,T,S}\sqcup \mathrm{vert}_{\id_x,\id_x,\id_x} = \mathrm{vert}_{U\boxtimes x,T\boxtimes x,S\boxtimes x}, \\
        \mathrm{hor}_{x\boxtimes T,x\boxtimes S}^{x\boxtimes T',x\boxtimes S'} = \mathrm{hor}_{T,S}^{T',S'}\sqcup \mathrm{hor}_{\id_x,\id_x}^{\id_x,\id_x} = \mathrm{hor}_{T\boxtimes x,S\boxtimes x}^{T'\boxtimes x,S'\boxtimes x}, \\
        \mathrm{cup}_{x\boxtimes S} =\mathrm{cup}_{S}\sqcup \mathrm{cup}_{\id_{x}} = \mathrm{cup}_{S\boxtimes x}. \label{eq:cupTensId}
    \end{gather}
    Moreover, if $T''\colon s'\to t'$ is another 1-morphism in $\Foambicat{\labdata}$, one has
    \begin{equation}\label{eq:tensAndSqcupCup}
        \mathrm{cup}_{(t\boxtimes T'')\circ (S\boxtimes s')}= \mathrm{cup}_{T''}\sqcup \mathrm{cup}_S.
    \end{equation}
\end{lemma}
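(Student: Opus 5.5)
The plan is to reduce every claimed identity to a comparison of the combinatorial data in the constructions. The key observation is that all the foams involved ($\mathrm{cup}$, $\mathrm{vert}$, $\mathrm{hor}$) are built locally from the edges and vertices of the 1-morphisms. Tensoring with $x$ adds a disjoint block of strands that never meets the rest of the web.

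First I establish the underlying combinatorial facts about 1-morphisms. For $W\colon s\to t$, the 1-morphism $x\boxtimes W$ is obtained by prepending $x$ to each $o_l$. By Convention~\ref{conv:vertedges}, this gives
\[
V(x\boxtimes W)=V(W), \qquad E(x\boxtimes W)=E(W)\sqcup E(\id_x),
\]
where the new edges are passing edges, one for each position of $x$. The labels, orientations and edge types of the old edges are unchanged. One then checks that $(x\boxtimes S)^{*\#}$ has the same vertex and edge data as $S^{*\#}$ together with passing edges for $x^\#$. The reason is that $\lcupweb_{x\boxtimes s}$ and $\lcapweb_{x\boxtimes t}$ decompose inductively (Construction~\ref{constr:fold}) as nested cups and caps, with the $x$-cups outermost. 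Consequently the gluing surjection \eqref{eq:surjwebglue} for $\inthom(x\boxtimes S,x\boxtimes T)$ splits as the one for $\inthom(S,T)$ together with the one for $\inthom(\id_x,\id_x)$. This recovers the web equality already stated in Construction~\ref{constr:tensoringWithObj}, now refined to the level of edge sets and their partitions into equivalence classes $P$. The case $-\boxtimes x$ is symmetric.

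Next I compare the foams. In Constructions~\ref{constr:intid}, \ref{constr:intcomp} and \ref{constr:intHorComp}, seams are indexed by vertices. Facets are indexed by internal edges and by the equivalence classes $P$ of boundary edges sharing positions in $s,t,u$. Since the positions of $x$ are never identified with positions of the original objects, the classes $P$ for $x\boxtimes(-)$ are exactly the old classes together with the classes coming purely from $\id_x$. The same holds for their boundary words, labels and cases (i)–(iii). The cyclic orderings are inherited from the boundary web in both cases, so they agree. This yields the three displayed chains of equalities; for the cup identity, \eqref{eq:cupTensId}, only the facet rules of Construction~\ref{constr:intid} need to be compared. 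For \eqref{eq:tensAndSqcupCup}, I apply the same reasoning to $W=(t\boxtimes T'')\circ(S\boxtimes s')$. Its vertices are $V(T'')\sqcup V(S)$. Its edges are those of $S$ and of $T''$, glued along positions in $t\boxtimes s'$. The passing edges of $\id_t$ and $\id_{s'}$ are absorbed into boundary edges of $S$ and $T''$, so no extra identity block remains. The facets of $\mathrm{cup}_W$ are again indexed by the edges of $W$, and each such edge lies entirely in the $S$-part or the $T''$-part.

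The main obstacle is this last point about edge types, and in particular the boundary bookkeeping in \eqref{eq:tensAndSqcupCup}. A passing edge of $S$ in the $t$-positions continues through the $t$-strands of $t\boxtimes T''$ as a passing edge of $W$. Similarly, a boundary arc of $S$ ending in $t$ becomes a boundary arc of $W$, and turnbacks also persist. I would verify, case by case on edge type following Convention~\ref{conv:vertedges}, that each edge type of $W$ together with its $*\#$ partner yields the same facet in $\mathrm{cup}_W$ as in $\mathrm{cup}_S$ or $\mathrm{cup}_{T''}$. This includes checking that the facet has the same genus and the same number of boundary components. Beyond that, the remaining equalities are bijections of finite sets that respect labels, orientations and cyclic orders, exactly as in the proofs of Proposition~\ref{prop:foamHomcat} and Lemma~\ref{lem:interchangelaw}.
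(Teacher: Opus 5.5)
Your proposal is correct and takes the same route as the paper. The paper's proof is exactly a direct comparison of the vertex, seam, facet and cyclic-ordering data in Constructions~\ref{constr:intid}, \ref{constr:intcomp} and \ref{constr:intHorComp}; you spell out the bookkeeping it leaves to the reader, namely the edge sets of $x\boxtimes W$, the splitting of the gluing surjection and of the classes $P$, and the preservation of edge types in $(t\boxtimes T'')\circ(S\boxtimes s')$.

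One harmless imprecision: the cap in $\inthom(x\boxtimes S,x\boxtimes T)$ is $\rcapweb_{x\boxtimes t}$, not $\lcapweb_{x\boxtimes t}$, and inside the mate $(x\boxtimes S)^{*\#}$ the $x^\#$-cup of $\rcupweb_{x\boxtimes t}$ is innermost rather than outermost. Neither affects the argument, since all you need is that every cup and cap pairs $x$-positions only with $x^\#$-positions, which holds regardless of nesting.
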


\begin{proof}
    The statement follows from Constructions \ref{constr:intcomp}, \ref{constr:intHorComp} and \ref{constr:intid}, i.e.\ by directly comparing the data defining the combinatorial $\labdata$-foams that appear in the identities.
\end{proof}

\begin{construction}[Tensorator]\label{constr:tensorator}
    Let $S\colon s\to t$ and $T\colon s'\to t'$ be 1-morphisms in $\Foambicat{\labdata}$.
    Observe that the following combinatorial $\labdata$-webs are the same:
    \begin{align}\label{eq:tensWebs}
    \begin{split}
        &\ \abs{\inthom((t\boxtimes T)\circ (S \boxtimes s'),(t\boxtimes T)\circ (S \boxtimes s'))}\\ 
        =&\ \abs{\inthom((S\boxtimes t')\circ(s\boxtimes T),(t\boxtimes T)\circ (S \boxtimes s'))}.
    \end{split}
    \end{align}
    Indeed, both $\labdata$-webs consist of the same combinatorial data describing $\abs{\inthom(S,S)} \sqcup\abs{\inthom(T,T)}$. In terms of graphical calculus, we have
    \begin{align*}
        (S\boxtimes t')\circ(s\boxtimes T)=\;\;  \SourceWebIntHomTensorator\quad\quad \text{and}\quad\quad \TargetWebIntHomTensorator\;\;=(t\boxtimes T)\circ (S \boxtimes s').
    \end{align*}
    By \eqref{eq:tensWebs}, we can view the combinatorial $\labdata$-foam $\mathrm{cup}_{(t\boxtimes T)\circ (S \boxtimes s')}$ defined in  Construction~\ref{constr:intid}, as a foam 
    \begin{align*}
        \emptyset\to
       \abs{\inthom((S\boxtimes t')\circ(s\boxtimes T),(t\boxtimes T)\circ (S \boxtimes s'))}.
    \end{align*}
    The \emph{tensorator} on $S$ and $T$ is defined as the 2-morphism
    \begin{align*}
        \otimes_{S,T}:= \webeval(\mathrm{cup}_{(t\boxtimes T)\circ (S \boxtimes s')})(1)\in \Hom_{\Foambicathom{\labdata}{t\boxtimes t'}{s\boxtimes s'}}((S\boxtimes t')\circ(s\boxtimes T),(t\boxtimes T)\circ (S \boxtimes s')).
    \end{align*}
\end{construction}

\begin{thm}\label{thm:semistrictMonStruct}
    Constructions \ref{constr:tensoringWithObj} and \ref{constr:tensorator} endow the 2-category $\Foambicat{\labdata}$ with a semistrict monoidal structure with monoidal unit given by the empty sequence $\emptyset$.
\end{thm}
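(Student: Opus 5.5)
The plan is to verify the list of conditions in \cite[Lemma 4]{BaezNeuchlHDAI} characterizing a semistrict monoidal 2-category. These conditions are:
\begin{enumerate}
\item the strict 2-functors $x\boxtimes -$ and $-\boxtimes x$, compatible on objects with concatenation and with $\emptyset$ acting as identity;
\item the equalities $(x\boxtimes -)\boxtimes y = x\boxtimes(-\boxtimes y)$, $x\boxtimes(y\boxtimes -)=(x\boxtimes y)\boxtimes -$ and $(-\boxtimes x)\boxtimes y = -\boxtimes(x\boxtimes y)$;
\item the tensorator $\otimes_{S,T}$ as a family of 2-isomorphisms, natural in $S$ and $T$;
\item the compatibility of $\otimes$ with horizontal composition in each variable, and with whiskering by objects;
\item the unit condition $\otimes_{S,T}=\id$ when $S$ or $T$ is an identity 1-morphism.
\end{enumerate}
As in the proofs of Proposition~\ref{prop:foamHomcat} and Theorem~\ref{thm:Foam2Cat}, every one of these identities will be reduced to an equality of combinatorial $\labdata$-prefoams. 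Once two composites of the structural foams are identified, applying the (symmetric monoidal) functor $\webeval$ gives the required equality of 2-morphisms.

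\textbf{Step 1: 2-functoriality.} On 1-morphisms, $x\boxtimes -$ and $-\boxtimes x$ strictly preserve $\hcompw$ and identities by definition. On 2-morphisms, I use Lemma~\ref{lem:tensAndSqcup}. Preservation of vertical composition amounts to the prefoam equality
\[
\mathrm{vert}_{U,T,S}\circ\big((\id\sqcup \mathrm{cup}_{\id_x})\sqcup(\id\sqcup \mathrm{cup}_{\id_x})\big) = (\id\sqcup \mathrm{cup}_{\id_x})\circ \mathrm{vert}_{U,T,S}
\]
of foams into $\abs{\inthom(S,U)}\sqcup\abs{\inthom(\id_x,\id_x)}$. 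This reduces to $\mathrm{vert}_{\id_x,\id_x,\id_x}\circ(\mathrm{cup}_{\id_x}\sqcup\mathrm{cup}_{\id_x})=\mathrm{cup}_{\id_x}$, which is the unitality equality \eqref{eq:unitVertComp} for $S=U=\id_x$. Preservation of horizontal composition is the analogous statement with $\mathrm{hor}$, using horizontal unitality from the proof of Theorem~\ref{thm:Foam2Cat}. Identities are preserved by \eqref{eq:cupTensId}. The strict associativity and unit conditions in (2) hold on objects and 1-morphisms by construction. On 2-morphisms they follow because both sides are induced by $\id\sqcup\mathrm{cup}_{\id_x}\sqcup\mathrm{cup}_{\id_y}$, using $\mathrm{cup}_{\id_{x\boxtimes y}}=\mathrm{cup}_{\id_x}\sqcup\mathrm{cup}_{\id_y}$, a special case of \eqref{eq:cupTensId}.

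\textbf{Step 2: the tensorator.} First I show that $\otimes_{S,T}$ is invertible. The candidate inverse is the same foam $\mathrm{cup}$, read via the analogue of \eqref{eq:tensWebs} as an element of the reverse hom-space. Both vertical composites are given by $\mathrm{vert}\circ(\mathrm{cup}\sqcup\mathrm{cup})$, which by \eqref{eq:unitVertComp} equals $\mathrm{cup}$, i.e.\ the identity. Naturality in $(\Sigma,\Theta)$ for $\Sigma\colon S\Rightarrow S'$ and $\Theta\colon T\Rightarrow T'$ requires
\[
\big((t\boxtimes\Theta)\circ_h(\Sigma\boxtimes s')\big)\circ_v\otimes_{S,T} = \otimes_{S',T'}\circ_v\big((\Sigma\boxtimes t')\circ_h(s\boxtimes\Theta)\big).
\]
Unwinding the definitions, both sides are $\webeval$ of foams $\abs{\inthom(S,S')}\sqcup\abs{\inthom(T,T')}\to\abs{\inthom(\dots)}$ built from $\mathrm{vert}$, $\mathrm{hor}$ and cups. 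By \eqref{eq:tensAndSqcupCup} and Lemma~\ref{lem:tensAndSqcup}, I expect both to collapse to the "identity" foam identifying $\abs{\inthom(S,S')}\sqcup\abs{\inthom(T,T')}$ with the target web. The compatibility of $\otimes$ with $\hcompw$ in each variable, and with whiskering, is handled the same way: both sides are composites of cups, which glue to the single cup on the combined web. The unit condition holds because for $S=\id_s$ the two 1-morphisms in \eqref{eq:tensWebs} coincide and $\otimes_{\id,T}$ is literally $\id$ by \eqref{eqn:vertid}.

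\textbf{Main obstacle.} The hardest part will be the naturality and composition axioms for the tensorator. There the prefoam equalities involve interleaved $\mathrm{hor}$, $\mathrm{vert}$ and several cups, and one has to track carefully how facets merge under Construction~\ref{constr:foamcomposition}: the genus and boundary counts, and the new loops from the (a)/(b1)/(b2) steps. My approach will be to prove one general lemma first. It should state that any composite of the structural foams ($\mathrm{vert}$, $\mathrm{hor}$, $\mathrm{cup}$, and disjoint unions) from a disjoint union of internal-hom webs to a fixed internal-hom web depends only on the source and target webs. Informally, each such composite has no internal vertices, one seam per vertex pair $v\leftrightarrow v^{*\#}$, and genus-zero facets indexed by the equivalence classes $P$ of boundary edges. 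I would prove this by induction on the number of gluings, checking at each step that facets stay genus zero (only operations of type (b1) occur, matched by the $1-|[f]_1|-|[f]_2|$ term) and that seams concatenate. With this uniqueness lemma in hand, every axiom follows at once.
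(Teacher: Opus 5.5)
Your overall reduction (verify the conditions of \cite[Lemma 4]{BaezNeuchlHDAI} by identifying underlying combinatorial prefoams, then apply $\webeval$) is the paper's. So is your handling of 2-functoriality, associativity, the unit condition and the naturality reduction. Your explicit check that $\otimes_{S,T}$ is invertible is a point the paper's proof passes over.

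The gap is the general uniqueness lemma you plan to use for the tensorator axioms. As stated it is false, and the invariant in its proof is false too.

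\textbf{A composite is not determined by its source and target webs.} It depends on how their edges and vertices are identified, and the tensorator is defined precisely by re-identifying webs via \eqref{eq:tensWebs}. Take $f=\rcapweb_x\hcompw\lcupweb_x\colon\emptyset\to\emptyset$, a single circle. Both $\id_{f\hcompw f}$ and $\otimes_{f,f}$ are the image of one cup $\emptyset\to\abs{\inthom(f\hcompw f,f\hcompw f)}$. However, \eqref{eq:tensWebs} matches $S$-copies with $S$-copies, even though here the two webs there are literally equal. So $\otimes_{f,f}$ joins the target circle of the first-applied factor to the source circle of the second-applied factor, and vice versa, while $\id_{f\hcompw f}$ joins them straight.

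These must differ. Otherwise naturality, $\otimes_{f,f}\vcomp(\id_f\hcomp\Sigma)=(\Sigma\hcomp\id_f)\vcomp\otimes_{f,f}$, would force $\id_f\hcomp\Sigma=\Sigma\hcomp\id_f$. That fails for a decorated annulus $\Sigma$, e.g.\ in $\overrightarrow{\bn_A}$ with $A=\Z[x]/(x^2)$, where both sides lie in $A^{\otimes 4}$ and decorate different circles. So either your lemma counts re-identified cups as structural and proves something false, or it does not and says nothing about the tensorator axioms.

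\textbf{Genus zero is not preserved.} Type (a) gluings occur whenever a cup caps a loop, and the gluing pattern need not be a tree. Take $s=t=(\hat x,\check x)$, $T=\id_s$ and $V=\lcupweb_x\hcompw\rcapweb_x$. Then $\mathrm{vert}_{T,V,T}$ and $\mathrm{vert}_{T,T,V}$ each have a single facet, lying in $F_{0,4}$, with two boundary loops on $\abs{\inthom(T,T)}$. The composite $\mathrm{vert}_{T,T,V}\circ(\mathrm{vert}_{T,V,T}\sqcup\id)$, which is the foam of a triple vertical composite, glues these two facets along both loops. By Construction~\ref{constr:foamcomposition} this produces a genus-one facet.

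What is needed instead is what the paper does axiom by axiom. You have to track which named edges and vertices of $S,T,T',\dots$ are matched under each identification. Then Lemma~\ref{lem:tensAndSqcup} splits each composite into a disjoint union of a part involving only $S$ and a part involving only $T,T'$. Each part collapses by unitality of $\vcomp$ and $\hcomp$; for naturality in $T$, both sides become $\id_{\abs{\inthom(T,T')}}\sqcup\mathrm{cup}_S$.

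In the composites arising in the tensorator axioms, every input except at most one is a cup, and cups contribute only disks and annuli. That is why genus zero does hold there. A uniqueness statement restricted to such composites, and formulated relative to the edge and vertex matchings, would be a legitimate way to package these checks. It is not the lemma you stated, and verifying that the matchings agree on both sides is the actual content your lemma would have hidden.
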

\begin{proof}
    We verify (i)-(viii) of Lemma 4 in \cite{BaezNeuchlHDAI}. Let $x$ be an object in $\Foambicat{\labdata}$. For (i), we verify the 2-functoriality properties of $x\boxtimes -$ and $-\boxtimes x$.
    \begin{itemize}
        \item Composition of 1-morphisms: Let $s,t,u$ be objects and $S\colon s\to t$ and $T\colon t\to u$ be 1-morphisms in $\Foambicat{\labdata}$. We have $x\boxtimes (T\circ S) = (x\boxtimes T)\circ (x\boxtimes S)$ since in Construction~\ref{constr:tensoringWithObj}, $x\boxtimes -$ is defined as the (strictly associative) concatenation of objects in each element of the sequence.
        \item Identity 1-morphisms: For an object $s$ in $\Foambicat{\labdata}$, we have $\id_{x\boxtimes s} = x\boxtimes \id_s$ since both sides are the empty word.
        \item Identity 2-morphisms: For a 1-morphism $S\colon s\to t$ in, we have $x\boxtimes \id_S = \id_{x\boxtimes S}$. Indeed, by Constructions \ref{constr:intid} and \ref{constr:tensoringWithObj} 
        the combinatorial $\labdata$-foams underlying the two sides are $\mathrm{cup}_{\id_x}\sqcup \mathrm{cup}_{S}$ and $\mathrm{cup}_{x\boxtimes S}$ respectively, which are equal by construction.
        \item Functoriality for vertical composition: Given 1-morphisms $S,T,U\colon s\to t$ and 2-morphisms $\Sigma\in\Hom_{\Foambicathom{\labdata}{t}{s}}(S,T)$ and $\Theta\in\Hom_{\Foambicathom{\labdata}{t}{s}}(T,U)$, we have 
        \[
            x\boxtimes (\Theta \circ_v \Sigma) = (x\boxtimes \Theta)\circ_v (x\boxtimes \Sigma).
        \]
        The combinatorial $\labdata$-foams underlying the two sides of this equality are respectively 
        \[
            (\mathrm{cup}_{\id_x}\sqcup \id_{\abs{\inthom(S,U)}})\circ\mathrm{vert}_{U,T,S}=\mathrm{cup}_{\id_x}\sqcup \mathrm{vert}_{U,T,S}
        \]
        and 
        \[
            \mathrm{vert}_{x\boxtimes U,x\boxtimes T,x\boxtimes S} \circ ((\mathrm{cup}_{\id_x}\sqcup \id_{\abs{\inthom(T,U)}}) \sqcup (\mathrm{cup}_{\id_x}\sqcup \id_{\abs{\inthom(S,T)}})).
        \]
        In order to verify the equality, we have to show that the latter combinatorial $\labdata$-foam is equal to the former. This is a consequence of \Cref{lem:tensAndSqcup} and of the unitality of $\circ_v$, shown in \Cref{prop:foamHomcat}.
        \item Functoriality for horizontal composition: 
        Given parallel 1-morphisms $S,T\colon s\to t$ and $S',T'\colon t\to u$, and 2-morphisms $\Sigma\in\Hom_{\Foambicathom{\labdata}{t}{s}}(S,T)$ and $\Sigma'\in\Hom_{\Foambicathom{\labdata}{u}{t}}(S',T')$, we have 
        \[
            x\boxtimes (\Sigma' \circ_h \Sigma) = (x\boxtimes \Sigma')\circ_h (x\boxtimes \Sigma).
        \]
        The combinatorial $\labdata$-foams underlying the two sides of this equality are respectively 
        \[
            (\mathrm{cup}_{\id_x}\sqcup \id_{\abs{\inthom(S'\circ S,T'\circ T)}})\circ\mathrm{hor}_{T,S}^{T',S'}=\mathrm{cup}_{\id_x}\sqcup \mathrm{hor}_{T,S}^{T',S'}
        \]
        and 
        \[
            \mathrm{hor}_{x\boxtimes T,x\boxtimes S}^{x\boxtimes T',x\boxtimes S'} \circ ((\mathrm{cup}_{\id_x}\sqcup \id_{\abs{\inthom(S',T')}}) \sqcup (\mathrm{cup}_{\id_x}\sqcup \id_{\abs{\inthom(S,T)}})).
        \]
        Similarly to the proof of functoriality for the vertical composition, the equality of these two combinatorial $\labdata$-foams is a consequence of  \Cref{lem:tensAndSqcup} and of the unitality of $\circ_h$ from \Cref{thm:Foam2Cat}.
    \end{itemize}
    The 2-functoriality of $-\boxtimes x$ follows completely analogously by symmetry of the construction.
\smallskip

    For (ii), let $\emptyset$ be the object given by the empty sequence. For every object $x$, we have $\emptyset \boxtimes x = x\boxtimes \emptyset = x$ since $\boxtimes$ acts as concatenation on objects. For a 1-morphism $S\colon s\to t$, we have $\emptyset \boxtimes S = S\boxtimes \emptyset = S$ since the operation is element-wise concatenation with the empty sequence. Now, let $T\colon s\to t$ be another 1-morphism and $\Sigma\in\Hom_{\Foambicathom{\labdata}{t}{s}}(S,T)$ be a 2-morphism. The combinatorial $\labdata$-foam ${}_\emptyset\boxtimes$ induces the same map under $\webeval$ as the internal identity and hence, $\emptyset \boxtimes \Sigma = \Sigma$. Similarly, we also obtain $\Sigma\boxtimes \emptyset =\Sigma$.
\smallskip

    For (iii), associativity is clear between objects and 1-morphisms, since the $\boxtimes$ operation is (element-wise) concatenation. Let $S,T\colon s\to t$ be 1-morphisms and $\Sigma\in\Hom_{\Foambicathom{\labdata}{t}{s}}(S,T)$ be a 2-morphism. Given two objects $x,y$, we need to show that
    \begin{align*}
        x\boxtimes (y\boxtimes \Sigma) =  (x\boxtimes y)\boxtimes \Sigma, && x\boxtimes (\Sigma\boxtimes y) =  (x\boxtimes \Sigma)\boxtimes y, && \Sigma\boxtimes (x\boxtimes y) =  (\Sigma\boxtimes x)\boxtimes y.
    \end{align*}
    The combinatorial $\labdata$-foams underlying all the terms of the equalities above can be shown to be equal to $\id_{\abs{\inthom(S,T)}}\sqcup \mathrm{cup}_{\id_x}\sqcup \mathrm{cup}_{\id_y}$. This follows directly from Construction \ref{constr:tensoringWithObj}, \Cref{prop:foamcat} and  \Cref{lem:tensAndSqcup}.
    \smallskip

    For (iv) let $S\colon s\to s'$, $T\colon t\to t'$, $U\colon u\to u'$ be 1-morphisms. We need to prove the following equalities: 
    \begin{align*}
        \otimes_{s\boxtimes T,U}=s\boxtimes \otimes_{T,U}, &&
        \otimes_{S\boxtimes t,U}=\otimes_{S,t\boxtimes U}, &&
        \otimes_{S,T\boxtimes u}=\otimes_{S,T}\boxtimes u.
    \end{align*}
    All equalities follow from \Cref{lem:tensAndSqcup}.
\smallskip

    For (v), let $x,y$ be objects. We need to show that $\id_x \boxtimes y = x\boxtimes \id_y = \id_{x\boxtimes y}$, but this is clear since each of the expression is the empty sequence of objects. Now, let $S\colon s\to t$ and $T\colon s'\to t'$ be 1-morphisms. We need to show that $\otimes_{\id_x, T}=\id_{x\boxtimes T}$ and $\otimes_{S,\id_y}=\id_{S\boxtimes y}$. The underlying combinatorial $\labdata$-foams from Constructions~\ref{constr:intid} and \ref{constr:tensorator} are the same. Hence, applying $\webeval$, we obtain the equality.
\smallskip

    For (vi) let $S\colon s\to t$ and $T,T'\colon s'\to t'$ be 1-morphisms, and $\Sigma\in\Hom_{\Foambicathom{\labdata}{t'}{s'}}(T,T')$ be a 2-morphism. We need to show that 
    \begin{equation} \label{eq:tensProofvi}
        \otimes_{S,T'}\circ_v (\id_{S\boxtimes t'}\circ_h (s\boxtimes \Sigma)) = ((t\boxtimes \Sigma)\circ_h \id_{S\boxtimes s'})\circ_v \otimes_{S,T}.
    \end{equation}
    In order to prove this equality, we translate it in terms of combinatorial $\labdata$-foams. Since the combinatorial $\labdata$-foam associated to $\otimes_{S,T'}$ is $\mathrm{cup}_{(t\boxtimes T)\circ (S\boxtimes s')}$, which also corresponds to the identity of $\circ_v$ by \Cref{prop:foamHomcat}, the $\labdata$-foam associated to $\otimes_{S,T'}\circ_v -$ is equal to the identity. Analogously, the combinatorial $\labdata$-foam corresponding to $- \circ_v \otimes_{S,T}$ is equal to the identity.
    Hence, we only need to show that the combinatorial $\labdata$-foams underlying the remaining parts of \eqref{eq:tensProofvi} are equal, that is, by \Cref{lem:tensAndSqcup}:
    \[
        \mathrm{hor}_{s\boxtimes T',s\boxtimes T}^{S\boxtimes t',S\boxtimes t'} \circ (\mathrm{cup}_S\sqcup \mathrm{cup}_{\id_{t'}}\sqcup \mathrm{cup}_{\id_s})=
        \mathrm{hor}_{S\boxtimes s',S\boxtimes s'}^{t\boxtimes T',t\boxtimes T} \circ (\mathrm{cup}_{\id_t}\sqcup \mathrm{cup}_{S}\sqcup \mathrm{cup}_{\id_{s'}}).
    \]
    In fact, one can verify that the two sides of the identity above are equal, respectively, to 
    \[
        (\mathrm{hor}_{\id_s,\id_s}^{S,S} \circ (\mathrm{cup}_S\sqcup \mathrm{cup}_{\id_s})) \sqcup (\mathrm{hor}_{T',T}^{\id_{t'},\id_{t'}} \circ (\mathrm{cup}_{\id_{t'}}\sqcup \id_{\abs{\inthom(T,T')}}))
    \]
    and 
     \[
        (\mathrm{hor}^{\id_t,\id_t}_{S,S} \circ (\mathrm{cup}_{\id_t}\sqcup \mathrm{cup}_S)) \sqcup (\mathrm{hor}^{T',T}_{\id_{s'},\id_{s'}} \circ (\id_{\abs{\inthom(T,T')}}\sqcup \mathrm{cup}_{\id_{s'}})).
    \]
    By unitality of $\circ_h$, both these $\labdata$-foams are equal to $\id_{\abs{\inthom(T,T')}} \sqcup \mathrm{cup}_S$, which proves identity \eqref{eq:tensProofvi}.
\smallskip

    For (vii) let $S,S'\colon s\to t$ and $T\colon s'\to t'$ be 1-morphisms, and $\Sigma\in\Hom_{\Foambicathom{\labdata}{t}{s}}(S,S')$ be a 2-morphism. We need to show that 
    \begin{equation*}
        \otimes_{S',T}\circ_v ((\Sigma\boxtimes t') \circ_h \id_{s\boxtimes T}) = (\id_{t\boxtimes T}\circ_h (\Sigma\boxtimes s'))\circ_v \otimes_{S,T}.
    \end{equation*}
    The proof is analogous to the previous point.
\smallskip

    For (viii) let $S\colon s\to t$, $S'\colon t\to u$, $T\colon s'\to t'$ and $T'\colon t'\to u'$ be 1-morphisms. We need to show that the following identities hold:
    \begin{gather*}
        \otimes_{S,T'\circ T}= (\id_{t\boxtimes T'}\circ_h \otimes_{S,T})\circ_v (\otimes_{S,T'}\circ_h \id_{s\boxtimes T}), \\
        \otimes_{S'\circ S,T}= (\otimes_{S',T}\circ_h \id_{S\boxtimes s'})\circ_v (\id_{S'\boxtimes t'}\circ_h \otimes_{S,T}).
    \end{gather*}
    Similarly to the previous two points, the first identity can be verified as follows. First observe, using \Cref{lem:tensAndSqcup}, that the combinatorial $\labdata$-foam underlying each side of the equality splits as a disjoint union of a $\labdata$-foam involving only $T$ and $T'$ and a $\labdata$-foam involving only $S$. Then, the equality follows from unitality of $\circ_v$ and $\circ_h$ and from Constructions \ref{constr:inthom} and \ref{constr:intHorComp}. The second identity can be proved analogously.
\end{proof}

\subsection{Local gradings and examples}\label{subsec:locallygraded}
We show how to use \Cref{sec:Frob} to upgrade the construction of the locally linear semistrict monoidal 2-category $\Foambicat{\labdata}$ to a graded setting.

In principle, we can aim for two outcomes:
\begin{enumerate}
    \item A semistrict monoidal 2-category locally enriched in graded $\kb$-modules as in the \MainThm. We may call this a \emph{locally graded $\kb$-linear} semistrict monoidal 2-category.
    \item A locally $\kb$-linear semistrict monoidal 2-category with local $\Z$-action by grading shift autoequivalences on hom categories.
\end{enumerate}
In fact, it is most straightforward to first construct an example of type (1) and then pass to type (2) by the standard procedure of formally adjoining grading shifts of 1-morphisms and subsequently restricting 2-morphism modules to degree zero. \smallskip

    Suppose that $\kb$ is a graded commutative ring, $\labdata$ is a graded $\kb$-linear labeling datum, and we consider a graded foam evaluation, so that the graded version of property (A) is satisfied. Then $\FoamCat[\labdata]$ is equipped with a grading as in \Cref{sec:Frob} and $\webeval$ is a graded $\kb$-linear functor. Repeating Definition~\ref{def:2homsAndComposition} verbatim in this setting, the $\kb$-module $\Hom_{\Foambicathom{\labdata}{t}{s}}(S,T)$ of 2-morphisms between parallel 1-morphisms $S,T\colon s\to t$ naturally inherits a grading. However, the vertical composition $\vcomp$ of \eqref{eqn:vertcomp} and the horizontal composition map $\hcomp$ from \eqref{eq:horcomp} would in this case only be homogeneous $\kb$-linear maps of a certain degree determined by the abstract $\labdata$-foam realizing the respective operation. The key to obtaining a semistrict monoidal 2-category locally enriched in graded $\kb$-modules is thus to introduce systematic grading shifts of 2-morphism spaces, so that vertical and horizontal composition become grading-preserving on the level of 2-morphisms.

\begin{example}[Nilvalent graded case]\label{ex:BNgraded}
    Consider graded $\kb$-linear labeling data $\labdata$ for nilvalent foams. Let $s=(s_1,\ldots,s_k)$ be an object in $\Foambicat{\labdata}$ and, for all $i$, let $x_i$ be the element of $\labdata_1$ underlying $s_i\in\labdata_1\times \{\uparrow, \downarrow\}$. We define the \emph{shift} of $s$ to be 
    \begin{align*}
        \shift(s) :=\sum_i \shift(x_i)
    \end{align*}
    where $\shift(x_i)$ is the shift of the graded Frobenius algebra $\mathcal{Z}_{x_i}(\bigcirc)$ from Definition~\ref{def:gradingFrob}.
    
    Then, for 1-morphisms $S,T\colon s\to t$ in $\Foambicat{\labdata}$, we define
    \begin{align*}
        \Hom_{\Foambicathom{\labdata}{t}{s}}(S,T) := \webeval(\abs{\inthom(S,T)})\{\tfrac{1}{2}(\shift(s)+\shift(t))\}
    \end{align*}
    where on the right-hand side, we denote the upward shift of the graded $\kb$-module in curly brackets. One can check that with this shift the compositions $\vcomp$ and $\hcomp$ of $\Foambicat{\labdata}$ as well as the semistrict monoidal structure are grading-preserving.
    \end{example}
\begin{example}[Oriented decorated Khovanov--Bar-Natan surfaces]
    Specializing to the nilvalent case of Example~\ref{ex:BNgraded} with a single label and a graded Frobenius algebra $A$, we obtain a locally graded $\kb$-linear monoidal 2-category of \emph{oriented} decorated Khovanov--Bar-Natan surfaces. This yields an oriented version of part (1) of the \MainThm:
   \begin{align*}
    \overrightarrow{\bn_A}\in \Alg_{\Eone}(\Cat[\Cat[\kb\grmod]]).
\end{align*}
\end{example}

\begin{example}[Unoriented decorated Khovanov--Bar-Natan surfaces]
\label{exa:ubn}
    The statement (1) in the \MainThm\ relies on a completely parallel treatment of nilvalent combinatorial $\labdata$-foams and their evaluation as in Sections \ref{sec:closedFoamEval} and \ref{sec:TQFT} which do not carry orientation data. Indeed, this yields a locally graded $\kb$-linear monoidal 2-category of \emph{unoriented}\footnote{For unoriented decorated Khovanov--Bar-Natan surfaces, the natural input for the decorations is given by 2-dimensional TQFTs for \emph{orientable} cobordisms in the sense of \cite{GoertzWedrich}. These correspond to tuples $(A,\rmphi)$ of a commutative Frobenius algebra $A$ together with an involution $\rmphi$, which may be chosen as $\rmphi=\id$.} decorated Khovanov--Bar-Natan surfaces
\begin{align*}
    \bn_A\in\Alg_{\Eone}(\Cat[\Cat[\kb\grmod]]).
\end{align*}
 There exists a functor of monoidal 2-categories
\begin{align*}
    \overrightarrow{\bn_A}\to \bn_A
\end{align*}
forgetting the orientation. In particular, for objects this forgets the $\{\uparrow, \downarrow\}$ in the entries of the sequences. As a result, the objects of $\bn_A$ are just the $\boxtimes$-powers of the generating object corresponding to the unique label.
\end{example}

\begin{example}[Robert--Wagner $\glN$-foams]\label{ex:glNgraded}
    Given $N\in\N$, recall the case of the Robert--Wagner $\glN$-foams with graded labeling data $\labdata^N$ over $\kb$ from \Cref{ex:FAforRobertWagner}. We abbreviate the notation for the associated locally $\kb$-linear semistrict monidal 2-category by
    \[\Foambicat{N}:=\Foambicat{\labdata^N}\]
    For a 1-morphism $W\colon s\to t$, we note that the degree of the foam $\text{cup}_W$ from Construction~\ref{constr:intid}, which underlies the identity 2-morphism $\id_W$, is computed (using \eqref{eq:foamdeg}) as the weighted Euler characteristic
    \begin{align*}
        \deg(\text{cup}_W) = -\sum_{k\in \labdata_1}\sum_{\substack{\text{non-loop edges}\\\text{in $W$ labeled $k$}}} k(N-k) + \sum_{k,l\in \labdata_1}\sum_{\substack{\text{vertices in $W$}\\\text{labeled } k,l,k+l }}\left((k+l)(N-(k+l))+kl\right).
    \end{align*}
    Then, the graded $\kb$-module of 2-morphisms $\Hom_{\Foambicathom{N}{t}{s}}(S,T)$ is given by 
    \begin{align*}
        \Hom_{\Foambicathom{N}{t}{s}}(S,T) := \webeval(\abs{\inthom(S,T)})\{-\tfrac{1}{2}(\deg(\text{cup}_S)+\deg(\text{cup}_T))\}.
    \end{align*}
    In \cite{QR}, Queffelec--Rose describe a locally graded linear 2-category of progressive $\glN$-foams. In particular, the grading shifts of the 2-morphisms are given in \cite[Definition 3.3]{QR} in terms of the \emph{weighted Euler characteristic} of their source and target webs, which agree with the negative degree of the respective cup foams. The shift $\{-\tfrac{1}{2}(\deg(\text{cup}_S)+\deg(\text{cup}_T)\}$ corresponds to the shift of \cite{QR}, after a slight extension from the progressive $\glN$-foams, which are accessible via categorical skew Howe duality, to Robert--Wagner $\glN$-foams.
    In \cite[Lemma 3.4]{QR}, it is shown that this choice of grading is compatible with horizontal and vertical composition. Since $\deg(\text{cup}_W)$ is additive under disjoint union of webs $W$, it is clear that the grading shifts are also compatible with the monoidal structure. 
    \smallskip
    
    In conclusion, we have thus equipped the semistrict monoidal 2-category $\Foambicat{N}$ of $\glN$-foams with a local enrichment in graded $\kb$-modules, as stated in part (2) of the \MainThm. After trading the local graded $\kb$-linearity for local $\kb$-linearity with local $\Z$-action, we obtain the alternative version remarked upon in the introduction.
\end{example}

\begin{remark}
    The 2-endomorphism algebras of the generating objects of $\Foambicat{N}$ are Grassmannian cohomology rings $H^*(\mathrm{Gr}_{\mathbb{C}}(k,N);\Z)$ and thus non-semisimple for $0< k<N$. This obstructs $\Foambicat{N}$ from yielding a semisimple 2-category in the sense of \cite[Definition~1.4.1]{douglas2018fusion} even after change of ground ring and the usual 2-categorical completion operations. Analogous arguments may be applied to $\bn_A$ (and $\overrightarrow{\bn_A}$), for which the generating object has 2-endomorphism algebra $A$.
\end{remark}

\subsection{Duals and adjoints}
\label{subsec:duals}
\newcommand{\triangulator}{\triangledown}
Recall that semistrict monoidal 2-categories can be considered as Gray categories with one object. In this section, we define a duality structure in the sense of \cite{barrett2018gray} on $\Foambicat{\labdata}$ given by duals for objects and adjoints for 1-morphisms. Specifically, we prove the following theorem.
\begin{thm}\label{thm:dualityStruc}
The operations $*$ and $\#$ defined in \Cref{subsec:objsAndOneMorphs} extend to a spatial duality structure on $\Foambicat{\labdata}$ in the sense of \cite[Definitions 3.10 and 4.8]{barrett2018gray} in the following way.
\begin{enumerate}[(1)]
		\item \textbf{$*$-duality:}
		\begin{enumerate}[(a)]
			\item A (strict) 2-functor $*\colon \Foambicat{\labdata}\to \Foambicat{\labdata}^{\mathrm{hop},\mathrm{vop}}$ which is the identity on objects and reverses the order of vertical and horizontal composition. 
			\item A 2-morphism $\varepsilon_S\in \Hom_{\Foambicathom{\labdata}{t}{t}}(\id_{t}, S\circ S^*)$ for every 1-morphism $S\colon s \to t$ and objects $s,t$ in $\Foambicat{\labdata}$.
		\end{enumerate}
		satisfying the following properties
		\begin{enumerate}[(i)]
			\item $*\circ*$ is the identity 2-functor.
			\item For all 1-morphisms $U\colon r\to s$ and $S,T\colon s\to t$ and 2-morphisms $\Sigma \in \Hom_{\Foambicathom{\labdata}{t}{s}}(S,T)$, we have
			\begin{align}
				(\Sigma\hcomp \id_{S^*})\vcomp \varepsilon_S &= (\id_T\hcomp \Sigma^*) \vcomp \varepsilon_T,\\
				(\id_S\circ_h (\varepsilon_{S^*})^*)\circ_v(\varepsilon_S\circ_h \id_S)&=\id_S, \label{eq:zigzagForAdjoints}\\
				(\id_S \hcomp \varepsilon_U\hcomp \id_{S^*})\vcomp \varepsilon_S &= \varepsilon_{S\circ U}.
			\end{align}
            \item For all objects $x,y,s,t$ and 1-morphisms $S\colon s\to t$ we have
            \begin{itemize}
                \item $(x \boxtimes S \boxtimes y)^* = x \boxtimes  S^* \boxtimes y$
                \item $\varepsilon_{x \boxtimes S \boxtimes y}=x \boxtimes \varepsilon_S \boxtimes y$.
            \end{itemize}
		\end{enumerate}
		\item \textbf{$\#$-duality:} For every object $s$, there is a 1-morphism $\eta_s\colon \emptyset\to s\boxtimes s^\#$ called \emph{fold}, and an invertible 2-morphism $\triangulator_s\in \Hom_{\Foambicat{\labdata}}\bigl((\eta_s^*\boxtimes s)\circ (s\boxtimes \eta_{s^\#}),\id_s\bigr)$ called \emph{triangulator} such that:
		\begin{enumerate}[(i)]
			\item $s^{\#\#}=s$, for all objects $s$.
			\item $\emptyset^\#=\emptyset$, $\eta_\emptyset=\id_\emptyset$ and $\triangulator_\emptyset=\id_{\id_\emptyset}$.
			\item For all objects $s,t$:
			 \begin{itemize}
				\item $(s\boxtimes t)^\# = t^\# \boxtimes s^\# $ 
				\item $\eta_{s\boxtimes t} = (s\boxtimes \eta_t \boxtimes s^{\#}) \circ \eta_s$
				\item $\triangulator_{s\boxtimes t} = ((\triangulator_s\boxtimes t) \hcomp (s \boxtimes \triangulator_t)) \vcomp (\id_{\eta_{s}^*\boxtimes s\boxtimes t}\hcomp \otimes_{s\boxtimes \eta_t^*,\eta_{s^\#}\boxtimes t} \hcomp \id_{s\boxtimes t \boxtimes \eta_{t^\#}})$
			\end{itemize}
			\item For all objects $s$:
            \begin{align*}\id_{\eta_s^*} = (\id_{\eta_s^*}\hcomp (\triangulator_s \boxtimes s^\#))\vcomp (\otimes_{\eta_s^*,\eta_s^*} \hcomp \id_{s\boxtimes \eta_{s^\#}\boxtimes s^\#} ) \vcomp (\id_{\eta_{s}^*}\hcomp (s\boxtimes \triangulator_{s^\#}^* )).
            \end{align*}
		\end{enumerate}
        \item \textbf{Spatiality:} 
        The operation $\#$ extends to a weak 2-functor $\#\colon \Foambicat{\labdata} \to \Foambicat{\labdata}^{\text{hop,$\boxtimes$op}}$. It is weak in the sense that it respects horizontal composition and tensor product, reversing their orders, but only up to specified 2-isomorphisms. 
        
        The natural isomorphisms\footnote{These are natural isomorphisms in the sense of \cite[Definition A.10]{barrett2018gray}.} from \cite[Lemma 4.6 and Definition 4.8]{barrett2018gray}
        \begin{align*}
            \Delta\colon \#\to {*\# *} \qquad \text{and} \qquad {*\Delta *}\colon \#\to {*\# *}
        \end{align*}
        are equal.
	\end{enumerate}
\end{thm}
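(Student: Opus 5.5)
The guiding principle is the one used throughout \S\ref{sec:2cat}: every structural 2-morphism is defined as $\webeval(\Phi)(1)$ for an explicit combinatorial $\labdata$-foam $\Phi$ out of $\emptyset$. Every operation on 2-morphisms is the map induced by an explicit foam between disjoint unions of internal homs. Each axiom then reduces to an equality of combinatorial $\labdata$-foams, checked by matching vertices, seams and facets as in the proofs of Proposition~\ref{prop:foamHomcat} and Theorem~\ref{thm:semistrictMonStruct}. Lemma~\ref{lem:SameAbsIntHom} and Remark~\ref{rem:rAndBend} supply the key input: the closed webs underlying $\inthom(S,T)$, $\inthom(T^*,S^*)$, $\inthom(\id_t,T\hcompw S^*)$, etc.\ literally coincide. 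Many structural 2-morphisms are therefore cup foams $\mathrm{cup}_S$ reinterpreted via these identifications, just as the tensorator was in Construction~\ref{constr:tensorator}.

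For the $*$-duality, I first define $*$ on 2-morphisms. Given $\Sigma\in\webeval(\abs{\inthom(S,T)})$, I set $\Sigma^*:=\Sigma$, viewed in $\webeval(\abs{\inthom(T^*,S^*)})$ via the equality of underlying webs. Strict involutivity (i) is then immediate, since $*$ on 1-morphisms is an involution. Reversal of $\vcomp$ and $\hcomp$ follows by checking that $\mathrm{vert}_{U,T,S}$ and $\mathrm{vert}_{S^*,T^*,U^*}$ are the same foam under these identifications, and similarly for $\mathrm{hor}$. Next I define $\varepsilon_S:=\webeval(\mathrm{cup}_S)(1)$, regarded in $\webeval(\abs{\inthom(\id_t,S\hcompw S^*)})$. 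The naturality, zigzag \eqref{eq:zigzagForAdjoints} and composition identities in (ii) then reduce to foam identities, all of which are instances of the unit laws
\[
\mathrm{vert}\circ(\id\sqcup\mathrm{cup})=\id
\quad\text{and}\quad
\mathrm{hor}\circ(\id\sqcup\mathrm{cup})=\id ,
\]
after reindexing. The zigzag identity in particular says that a composite of two cups glued along one copy of $\abs{\inthom(S,S)}$ is again $\mathrm{cup}_S$. Property (iii) is exactly \eqref{eq:cupTensId} together with the definition of $*$ on basic webs.

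For the $\#$-duality, I take $\eta_s:=\lcupweb_s$ from Construction~\ref{constr:fold}. The equalities $s^{\#\#}=s$, $(s\boxtimes t)^\#=t^\#\boxtimes s^\#$ and $\eta_{s\boxtimes t}=(s\boxtimes\eta_t\boxtimes s^\#)\circ\eta_s$ hold by definition and the inductive construction. The triangulator $\triangulator_s$ is the 2-morphism represented by the foam $\emptyset\to\abs{\inthom((\eta_s^*\boxtimes s)\circ(s\boxtimes\eta_{s^\#}),\id_s)}$. This closed web consists of loops only, namely one circle per entry of $s$, so I take the disjoint union of disks, i.e.\ the cup foam of $\id_s$ after identifying webs. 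Its inverse is the analogous disk foam into the reversed internal hom. Invertibility and the coherences (iii), (iv) again become equalities of nilvalent foam pieces: disks glued to annuli yield disks. Condition (ii) is trivial.

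For spatiality, I extend $\#$ to 1-morphisms by the left mate of Construction~\ref{constr:mates}, and to 2-morphisms by the identification $\abs{\inthom(S,T)}=\abs{\inthom({}^\#S,{}^\#T)}$ up to $*\#$. The comparison 2-isomorphisms for $\hcompw$ and $\boxtimes$ are cup foams, exactly like the tensorator. I expect the main obstacle to be the last claim, $\Delta={*\Delta*}$. Here I would unwind \cite[Lemma 4.6]{barrett2018gray}, writing both $\Delta_S$ and $(*\Delta*)_S$ as composites of triangulators, $\varepsilon$'s and tensorators. I would then show that each composite is induced by a foam which, after gluing, is isomorphic to the single cup foam $\mathrm{cup}_S$ on the common underlying web $\abs{\inthom({}^\#S,{}^{*\#*}S)}$. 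The bookkeeping of the genus and boundary counts from Construction~\ref{constr:foamcomposition} is the delicate part. The argument should reduce to facets being disks or annuli, so all glued facets are again genus-zero.
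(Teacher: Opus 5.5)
Your proposal is correct and follows the paper's proof essentially step by step. In both, $*$ on 2-morphisms is induced by the identity foam via the equality of underlying closed webs, and $\varepsilon_S$, the triangulator and its inverse are reinterpreted cup foams ($\mathrm{cup}_S$ and $\mathrm{cup}_{\mathrm{id}_s}$); every axiom, including $\Delta={*\Delta*}$, is then reduced to an equality of combinatorial foams checked by direct comparison, at roughly the same level of detail as the paper's ``one can check''.

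The one deviation is in how $\#$ becomes a weak 2-functor. The paper does not build it by hand but takes it from \cite[Theorem 4.3]{barrett2018gray}. There, $\#$ on 2-morphisms is obtained by whiskering and tensoring with identity 2-morphisms, so it again reduces to the identity foam and agrees with your direct definition. Citing that theorem spares you checking the coherence axioms for your comparison 2-isomorphisms separately.
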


 \begin{remark}

The $*$-duality provides left- and right-adjoints for 1-morphisms. The equations
\begin{align*}
     (\id_S\hcomp (\varepsilon_{S^*})^*)\vcomp(\varepsilon_S\hcomp \id_S)&=\id_S \\
     ((\varepsilon_{S^*})^*\hcomp \id_{S^*})\vcomp(\id_{S^*}\hcomp \varepsilon_S) &= \id_{S^*}
\end{align*}
are the zig-zag identities for an adjunction between $S$ and its left-adjoint $S^*$, where the 2-morphisms $\varepsilon_S$ and $(\varepsilon_{S^*})^*$ act as unit and counit. 
The first equation is \eqref{eq:zigzagForAdjoints} and the second equation can be proven analogously. We refer to the proof of Theorem~\ref{thm:dualityStruc} below. Alternatively, note that the second equation follows from the duality structure by applying (13) in \cite[Lemma 3.3]{barrett2018gray} to $\id_S$.

Similarly, $S^*$ is right-adjoint to $S$ with unit and counit $\varepsilon_{S^*}$ and $(\varepsilon_S)^*$, and zig-zag identities
\begin{align*}
    (\id_{S^*}\hcomp (\varepsilon_{S})^*)\vcomp(\varepsilon_{S^*}\hcomp \id_{S^*})&=\id_{S^*}\\
    ((\varepsilon_{S})^*\hcomp \id_{S})\vcomp(\id_{S}\hcomp \varepsilon_{S^*}) &= \id_{S}.
\end{align*}
These equations can be obtained from the zig-zag identities for the left-adjoint above by interchanging $S$ with $S^*$, or by applying \cite[Lemma 3.3]{barrett2018gray} again.

The $\#$-duality provides duals for objects witnessed by the folds for which the triangulator acts as a zig-zag isomorphism. The operation $\#$ on 1-morphisms reverses the order of (horizontal) composition and of the monoidal structure.

\end{remark}
Recall that an object in a monoidal 2-category is called \emph{2-dualizable}, if there exist left dual and right dual objects, i.e.\ so that there exist pairs of unit and counit 1-morphisms such that the zig-zag relations hold up to 2-isomorphism; and furthermore the unit and counit 1-morphisms have left and right adjoints. As a direct consequence of Theorem~\ref{thm:dualityStruc}, we obtain the following statement. 
\begin{corollary}\label{cor:2dualizable}
    Every object of $\Foambicat{\labdata}$ is 2-dualizable.
\end{corollary}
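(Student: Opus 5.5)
The plan is to read off 2-dualizability directly from the duality structure of Theorem~\ref{thm:dualityStruc}, assumed proven. For an object $s$ of $\Foambicat{\labdata}$ I take $s^\#$ as candidate dual and the folds as units: $\eta_s\colon \emptyset\to s\boxtimes s^\#$ and $\eta_{s^\#}\colon \emptyset \to s^\#\boxtimes s$, using $s^{\#\#}=s$ from (2)(i). The candidate counits are the $*$-duals $\eta_s^*\colon s\boxtimes s^\#\to\emptyset$ and $\eta_{s^\#}^*\colon s^\#\boxtimes s\to \emptyset$; since $*$ is the identity on objects and swaps sources and targets of 1-morphisms by (1)(a), these have the right types. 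For $s^\#$ to be a right dual of $s$ I use the unit $\eta_{s^\#}$ and counit $\eta_s^*$. For a left dual I use the same pair with the roles of $s$ and $s^\#$ exchanged, so no separate construction is needed.

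Next I verify the zig-zag relations up to 2-isomorphism. The triangulator $\triangulator_s\colon(\eta_s^*\boxtimes s)\circ(s\boxtimes \eta_{s^\#})\Rightarrow \id_s$ is invertible by (2), so it witnesses one snake relation. Applying it to $s^\#$ gives $\triangulator_{s^\#}\colon(\eta_{s^\#}^*\boxtimes s^\#)\circ(s^\#\boxtimes \eta_{s})\Rightarrow \id_{s^\#}$, again using $s^{\#\#}=s$, which is the other snake relation for the same pair. So each object has a two-sided dual up to invertible 2-cells, and its dual does too.

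Finally, 2-dualizability also requires the units and counits to have left and right adjoints. By the remark after Theorem~\ref{thm:dualityStruc}, every 1-morphism $S$ has $S^*$ as both left and right adjoint. The units are $\eta_s$ and $\eta_{s^\#}$ and the counits are their $*$-images; by (1)(i) the counits' adjoints are again the folds. The left adjoint uses unit $\varepsilon_S$ and counit $(\varepsilon_{S^*})^*$, and the zig-zags are \eqref{eq:zigzagForAdjoints} together with its analogue. The right adjoint uses unit $\varepsilon_{S^*}$ and counit $(\varepsilon_S)^*$.

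There is no real obstacle, since everything is formal bookkeeping from the theorem. The only point needing care is matching conventions, namely which of $\eta_s$ and $\eta_{s^\#}$ plays unit and which plays counit for left versus right duals, given the paper's opposite-tensor convention. In either assignment, however, the required invertible 2-cell is an instance of $\triangulator_s$ or $\triangulator_{s^\#}$.
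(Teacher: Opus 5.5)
Your route is the paper's: duals come from the $\#$-duality, with the folds and their $*$-images as units and counits; snake isomorphisms come from triangulators; and adjoints come from the $*$-duality and the remark after Theorem~\ref{thm:dualityStruc}. The adjoint part is fine. The zig-zag step, however, contains a concrete error.

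The source of $\triangledown_{s^\#}$ is $(\eta_{s^\#}^*\boxtimes s^\#)\circ(s^\#\boxtimes\eta_s)$. This is a snake for the pair with unit $\eta_s$ and counit $\eta_{s^\#}^*$, i.e.\ your left-dual pair. It is not the second snake for the pair $(\eta_{s^\#},\eta_s^*)$. That second snake is the different 1-morphism $(s^\#\boxtimes\eta_s^*)\circ(\eta_{s^\#}\boxtimes s^\#)\colon s^\#\to s^\#$, with cup and cap on the opposite sides. So as written you have exactly one snake isomorphism for each of your two pairs. That is not enough: one snake identity does not imply the other in general, already for duals in a monoidal 1-category. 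The same misattribution recurs in your last paragraph, where you claim every required cell is an instance of $\triangledown_s$ or $\triangledown_{s^\#}$.

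The repair uses the $*$-duality you already invoke for adjoints. By (1)(a), (1)(i) and (1)(iii) of Theorem~\ref{thm:dualityStruc}, $\triangledown_{s^\#}^*$ is a 2-morphism $\id_{s^\#}\Rightarrow(s^\#\boxtimes\eta_s^*)\circ(\eta_{s^\#}\boxtimes s^\#)$. It is invertible with inverse $(\triangledown_{s^\#}^{-1})^*$, because $*$ is a 2-functor that reverses vertical composition and satisfies $(\id_S)^*=\id_{S^*}$. Likewise $\triangledown_s^*\colon\id_s\Rightarrow(s\boxtimes\eta_{s^\#}^*)\circ(\eta_s\boxtimes s)$ supplies the missing snake for the pair $(\eta_s,\eta_{s^\#}^*)$.

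The correct cusp data are therefore $(\triangledown_s,\triangledown_{s^\#}^*)$ for one pair and $(\triangledown_{s^\#},\triangledown_s^*)$ for the other. This matches axiom (2)(iv), where $\triangledown_s$ and $\triangledown_{s^\#}^*$ appear together, and is what the paper's phrase ``versions of the triangulator'' refers to. With this correction your argument is complete and agrees with the paper's.
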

In particular, every object of $\Foambicat{N}$, $\overrightarrow{\bn_A}$, and $\bn_A$ is 2-dualizable. Furthermore, in $\bn_A$, every object is self-dual. 
\begin{proof}
    The $\#$-duality computes left- and right-duals for objects, the corresponding unit and counit morphisms are given by appropriate cup and cap 1-morphisms, whose zig-zags are witnessed by versions of the triangulator. The left and right adjoints for these cup and cap 1-morphisms are again appropriate cap and cup 1-morphisms, respectively (with grading shifts in the version with local $\Z$-actions). The unit and counit 2-morphisms for these adjunctions can be expressed in terms of $\varepsilon_S$ and $*$ and visualized as 2-dimensional cup, saddle, and cap 2-morphisms. Their zig-zag relations are discussed in the remark above.
\end{proof}
Below, we prove \Cref{thm:dualityStruc}. For this, we first give the following constructions.

\begin{construction}[$*$-duality] 
\label{constr:adjointsFor1Morph}
In the following, we define the data for adjoints of 1-morphisms in $\Foambicat{\labdata}$ via the $*$-duality. \begin{enumerate}[(a)]
			\item For a 1-morphism $W\colon s\to t$ of $\Foambicat{\labdata}$, we have defined $W^*\colon t\to s$ in Construction~ \ref{constr:starOn1morphs}. Now, let $s,t$ be objects of $\Foambicat{\labdata}$ and consider 1-morphisms $S,T\colon s\to t$. By Lemma~\ref{lem:SameAbsIntHom}, we have
            \[
                \abs{\inthom(S,T)}=\abs{\inthom(T^*,S^*)},
            \]
            therefore we can view the combinatorial $\labdata$-foam $\id_{\abs{\inthom(S,T)}}$ from \Cref{def:idlabfoam} as a foam $\abs{\inthom(S,T)}\to \abs{\inthom(T^*,S^*)}$.
            We define the map
            \begin{align}\label{eq:defStarSualityOn2Morph}
                -^*\colon \Hom_{\Foambicathom{\labdata}{t}{s}}(S,T) \to \Hom_{\Foambicathom{\labdata}{s}{t}}(T^*,S^*)
            \end{align}
            as the map induced by $\id_{\abs{\inthom(S,T)}}$ under the functor $\webeval$. Let $\Sigma\in \Hom_{\Foambicathom{\labdata}{t}{s}}(S,T)$ be a 2-morphism, and denote the image of $\Sigma$ under $-^*$ by  $\Sigma^*\in \Hom_{\Foambicathom{\labdata}{s}{t}}(T^*,S^*)$.
			\item Given objects $s,t$ of $\Foambicat{\labdata}$ and a 1-morphism $S\colon s\to t$, again by Lemma \ref{lem:SameAbsIntHom}, we have
            \[
                \abs{\inthom(\id_t,S\circ S^*)} = \abs{\inthom(S,S)}.
            \]
            Then, the combinatorial $\labdata$-foam $\mathrm{cup}_S$ of \Cref{constr:intid} can be viewed as a foam $\emptyset \to \abs{\inthom(\id_t,S\circ S^*)}$.
            In terms of graphical calculus, we have for the internal hom in the target:
            \begin{align*}
                \TargetWebEpsilon{S}
            \end{align*}
            We define the 2-morphism
            \begin{equation} \label{eq:defEpsilon}
                \varepsilon_S := \webeval(\mathrm{cup}_S)(1)\in \Hom_{\Foambicathom{\labdata}{t}{t}}(\id_t,S\circ S^*).
            \end{equation}
		\end{enumerate}
        Note that by \cite[Lemma 3.3]{barrett2018gray}, the functor $*$ is uniquely determined on 2-morphisms by choice of the $\varepsilon_S$ defined in \eqref{eq:defEpsilon}. 
\end{construction}

\begin{construction}[\#-duality]\label{constr:dualsForObj}
In the following, we define the $\#$-duality which provides duals for objects. For any object $s$ of $\Foambicat{\labdata}$, the element $s^\#$ is defined in Construction \ref{constr:dualobj} and the \emph{fold} is defined as $\eta_s:=\lcupweb_s$, where $\lcupweb_s$ is the 1-morphism from Construction~\ref{constr:fold}. Then, we have $\eta_s^*=\rcapweb_s$ and $\eta_{s^\#}=\rcupweb_s$. Now observe that similarly to the argument in Lemma \ref{lem:SameAbsIntHom}, we also have
    \begin{equation} \label{eq:triangulatorIntHom}
        \abs{\inthom\bigl((\eta_s^*\boxtimes s)\circ (s\boxtimes \eta_{s^\#}),\id_s\bigr)} = \abs{\inthom(\id_s,\id_s)}.
    \end{equation}
We depict the internal hom on the left-hand side in terms of graphical calculus. For this, note that the source and target in the internal homs are depicted as
\begin{align*}
    (\eta_s^*\boxtimes s)\circ (s\boxtimes \eta_{s^\#})=\;\; \curledIdentity{s} \quad\quad \text{and} \quad\quad \idonobj{s}\;\;=\id_s .
\end{align*}
For the internal hom, we have:
\begin{align*}
    \TargetWebTriangulator{s}
\end{align*}
With \eqref{eq:triangulatorIntHom}, the combinatorial $\labdata$-foam $\mathrm{cup}_{\id_s}$ of \Cref{constr:intid} can be viewed as a foam $\emptyset \to \abs{\inthom\bigl((\eta_s^*\boxtimes s)\circ (s\boxtimes \eta_{s^\#}),\id_s\bigr)}$. We define the \emph{triangulator} $\triangulator_s$ as the 2-morphism
\begin{equation}
    \triangulator_s := \webeval(\mathrm{cup}_{\id_s})(1)\in \Hom_{\Foambicathom{\labdata}{s}{s}}\bigl((\eta_s^*\boxtimes s)\circ (s\boxtimes \eta_{s^\#}),\id_s\bigr).
\end{equation}
\end{construction}

\begin{remark}
    Constructions \ref{constr:adjointsFor1Morph} and \ref{constr:dualsForObj} can be extended to both types of graded settings from \Cref{subsec:locallygraded}. From the statements in Subsection~\ref{subsec:locallygraded}, we then deduce that the operations $*$ and $\#$ extend to a duality structure in the locally graded $\kb$-linear case. The $*$-duality defined on 2-morphisms in \eqref{eq:defStarSualityOn2Morph} becomes a map of graded $\kb$-modules and $\varepsilon_S$ for a 1-morphism $S\colon s\to t$ becomes a homogeneous 2-morphism, typically of nonzero degree. For the two Examples~\ref{ex:BNgraded} and \ref{ex:glNgraded}, the degree of the element $\varepsilon_S$ can be computed from the weights stated there.
    By Corollary~\ref{cor:2dualizable} and specializing to the relevant examples, Theorem \ref{thm:dualityStruc} thus proves the remaining claims of the \MainThm.
    
    To accommodate the setting with local $\Z$-actions, one would like to redefine the $*$-operation on 1-morphisms to include compensating grading shifts, so that all $\varepsilon_S$ become grading-preserving. However, the 2-functor $*$ will then typically no longer square to the identity, but rather to a grading shift of the identity.\end{remark}

\begin{proof}[Proof of Theorem \ref{thm:dualityStruc}]
\textbf{$*$-duality:} We start with showing that $*\colon \Foambicat{\labdata}\to \Foambicat{\labdata}^{\mathrm{hop},\mathrm{vop}}$ is indeed a 2-functor.
\begin{itemize}
    \item Composition of 1-morphisms: Let $s,t,u$ be objects and $S\colon s\to t$ and $T\colon t\to u$ be 1-morphisms in $\Foambicat{\labdata}$. We have $(T\circ S)^*=S^*\circ T^*$ by Construction~\ref{constr:starOn1morphs} and the definition of (horizontal) composition in Definition~\ref{def:foamonemorphisms}.
    \item Identity 1-morphisms: Let $s$ be an object, and consider the identity 1-morphism $\id_s\in \Foambicathom{\labdata}{s}{s}$ given by the empty sequence. Clearly, we have $(\id_s)^*=\id_{s^*}$.
    \item Identity 2-morphisms: Let $S\colon s\to t$ be a 1-morphism and consider $S^*\colon t\to s$. By Lemma \ref{lem:SameAbsIntHom}, the combinatorial closed $\labdata$-webs of $\inthom(S,S)$ and $\inthom(S^*,S^*)$ are equal. By Construction~\ref{constr:adjointsFor1Morph}, the combinatorial $\labdata$-foam underlying $-^*$ on 2-morphisms is the identity foam on $\abs{\inthom(S,S)}$. Hence, we obtain by Construction~\ref{constr:intid} that $(\id_S)^*=\id_{S^*}\in \Hom_{\Foambicathom{\labdata}{t}{s}}(S^*,S^*)$.
    \item Vertical composition of 2-morphisms: Given 2-morphisms $\Sigma\in\Hom_{\Foambicathom{\labdata}{t}{s}}(S,T)$ and $\Theta\in\Hom_{\Foambicathom{\labdata}{t}{s}}(T,U)$, we have to show that $(\Theta\circ_v \Sigma)^*=\Sigma^*\circ_v\Theta^*$. Since the combinatorial $\labdata$-foam describing $-^*$ on 2-morphisms is the identity, it follows that the combinatorial $\labdata$-foams underlying the two sides of the equality are both equal to $\mathrm{vert}_{U,T,S}\circ(\Theta\sqcup\Sigma)$, which proves the statement.
    \item Horizontal composition of 2-morphisms: Given 2-morphisms $\Sigma\in\Hom_{\Foambicathom{\labdata}{t}{s}}(S,T)$ and $\Sigma'\in\Hom_{\Foambicathom{\labdata}{u}{t}}(S',T')$, we have to show that $(\Sigma'\circ_h \Sigma)^*=\Sigma^*\circ_h\Sigma'^*$. Similarly to the previous point, this follows from the fact that at the level of combinatorial $\labdata$-foams, $-^*$ is given by the identity.
\end{itemize}
We now prove properties (i), (ii), and (iii).
\begin{enumerate}[(i)]
\item First, we show that $*\circ*$ is the identity 2-functor. On objects and 1-morphisms, this follows directly from the definition of $*$ in Constructions \ref{constr:dualobj} and \ref{constr:starOn1morphs}. For 2-morphisms, let $\Sigma\in \Hom_{\Foambicat{\labdata}}(S,T)$ where $S,T\colon s\to t$ are 1-morphisms. Then,
\begin{align*}
    (\Sigma^*)^*
    &=
    \webeval(\id_{\abs{\inthom(T^*,S^*)}})\circ \webeval(\id_{\abs{\inthom(S,T)}}) (\Sigma)
    \\ &= \webeval(\id_{\abs{\inthom(T^*,S^*)}}\circ\id_{\abs{\inthom(S,T)}}) (\Sigma)
    = 
    \webeval(\id_{\abs{\inthom(S,T)}}) (\Sigma)
    =
    \Sigma.
\end{align*}
\item We need to show three equations:
\begin{itemize}
    \item $(\Sigma\hcomp \id_{S^*})\vcomp \varepsilon_S = (\id_T\hcomp \Sigma^*) \vcomp \varepsilon_T$ for all 1-morphisms $S,T\colon s\to t$ and 2-morphisms $\Sigma \in \Hom_{\Foambicat{\labdata}}(S,T)$.
    This can be shown by verifying (using Constructions \ref{constr:inthom} and \ref{constr:intHorComp}) that the combinatorial $\labdata$-foams underlying both sides of the equation are equal to $\Sigma$.
    \item $(\id_S\circ_h (\varepsilon_{S^*})^*)\circ_v(\varepsilon_S\circ_h \id_S)=\id_S$ for all 1-morphisms $S\colon s\to t$. Indeed, by using Constructions \ref{constr:labfoamcomposition}, \ref{constr:intid}, \ref{constr:intcomp} and \ref{constr:intHorComp}, one can check that the combinatorial $\labdata$-foam underlying the left-hand side of the equation is equal to $\mathrm{cup}_S$. 
    \item $(\id_S \hcomp \varepsilon_U\hcomp \id_{S^*})\vcomp \varepsilon_S = \varepsilon_{S\circ U}$ for all 1-morphisms $S\colon s\to t$ and $U\colon r\to s$.
    Similarly to the previous point, this can be checked by going through Constructions \ref{constr:labfoamcomposition}, \ref{constr:intid}, \ref{constr:intcomp} and \ref{constr:intHorComp}.            
\end{itemize}
\item The equation $(x \boxtimes S \boxtimes y)^* = x \boxtimes  S^* \boxtimes y$ follows immediately from Constructions~\ref{constr:dualobj} and \ref{constr:starOn1morphs}. The equation  $\varepsilon_{x \boxtimes S \boxtimes y}=x \boxtimes \varepsilon_S \boxtimes y$ follows from \eqref{eq:defEpsilon} in Construction \ref{constr:adjointsFor1Morph} and \eqref{eq:cupTensId} in Lemma~\ref{lem:tensAndSqcup}.
\end{enumerate}
    \item \textbf{$\#$-duality:} We first show that $\triangulator_s$ is invertible. Given an object $s$, we define the 2-morphism 
    \begin{align*}
        \triangulator_s^{-1}:=\webeval(\mathrm{cup}_{\id_s})(1)\in\Hom_{\Foambicathom{\labdata}{s}{s}}\bigl(\id_s,(\eta_s^*\boxtimes s)\circ (s\boxtimes \eta_{s^\#})\bigr),
    \end{align*}
    where $\mathrm{cup}_{\id_s}$ is viewed as a foam 
    \begin{align*}
        \emptyset\to \abs{\inthom\bigl(\id_s,(\eta_s^*\boxtimes s)\circ (s\boxtimes \eta_{s^\#})\bigr)},
    \end{align*}
    by observing that $\abs{\inthom\bigl(\id_s,(\eta_s^*\boxtimes s)\circ (s\boxtimes \eta_{s^\#})\bigr)}=\abs{\inthom(\id_s,\id_{s})}$. Then it follows that $\triangulator_s\circ_v \triangulator_s^{-1}=\id_{\id_s}$ and $\triangulator_s^{-1}\circ_v \triangulator_s=\id_{(\eta_s^*\boxtimes s)\circ (s\boxtimes \eta_{s^\#})}$. Indeed, by unitality of $\circ_v$, the combinatorial $\labdata$-foams underlying the left-hand sides of both equalities are 
    \[
    \mathrm{vert}_{\id_s,\id_s,\id_s}\circ (\mathrm{cup}_{\id_s}\sqcup \mathrm{cup}_{\id_s})=\mathrm{cup}_{\id_s}=\mathrm{cup}_{(\eta_s^*\boxtimes s)\circ (s\boxtimes \eta_{s^\#})}.
    \]
    We now prove properties (i) through (iv).
    \begin{enumerate}[(i)]
        \item We have $s^{\#\#}=s$, for all objects $s$. This follows directly from Construction \ref{constr:dualobj}.
        \item The equations $\emptyset^\#=\emptyset$, $\eta_\emptyset=\id_\emptyset$ and $\triangulator_\emptyset=\id_{\id_\emptyset}$ are clear from the definition. 
        \item For all objects $s,t$:
         \begin{itemize}
            \item By definition, we have $(s\boxtimes t)^\# = t^\# \boxtimes s^\#$.
            \item From Construction~\ref{constr:fold}, it follows inductively that $\eta_{s\boxtimes t} = (s\boxtimes \eta_t \boxtimes s^{\#}) \circ \eta_s$.
            \item For $\triangulator_{s\boxtimes t} = ((\triangulator_s\boxtimes t) \hcomp (s \boxtimes \triangulator_t)) \vcomp (\id_{\eta_{s}^*\boxtimes s\boxtimes t}\hcomp \otimes_{s\boxtimes \eta_t^*,\eta_{s^\#}\boxtimes t} \hcomp \id_{s\boxtimes t \boxtimes \eta_{t^\#}})$ note that by \Cref{lem:tensAndSqcup}, the combinatorial $\labdata$-foam underlying the left-hand side is equal to $\mathrm{cup}_{\id_s}\sqcup\mathrm{cup}_{\id_t}$. For the right-hand side, observe that by \Cref{lem:tensAndSqcup} and unitality of $\circ_h$, the combinatorial $\labdata$-foam underlying $(\triangulator_s\boxtimes t) \hcomp (s \boxtimes \triangulator_t)$ is equal to $\mathrm{cup}_{\id_s}\sqcup\mathrm{cup}_{\id_t}$. Moreover, one can verify that the combinatorial $\labdata$-foams underlying 
        \begin{align*}
            \id_{\eta_{s}^*\boxtimes s\boxtimes t},\quad \id_{s\boxtimes t \boxtimes \eta_{t^\#}}, \quad \otimes_{s\boxtimes \eta_t^*,\eta_{s^\#}\boxtimes t}
        \end{align*}
        are equal, respectively, to 
        \begin{align*}
            (\mathrm{cup}_{\id_s})^{\sqcup 2}\sqcup\mathrm{cup}_{\id_t},\quad  \mathrm{cup}_{\id_s}\sqcup (\mathrm{cup}_{\id_t})^{\sqcup 2}, \quad (\mathrm{cup}_{\id_s})^{\sqcup 2}\sqcup(\mathrm{cup}_{\id_t})^{\sqcup 2}.
        \end{align*}
        Now, the third $\circ_h$ merges four copies of $\mathrm{cup}_{\id_t}$ into one, while the second $\circ_h$ merges four copies of $\mathrm{cup}_{\id_s}$ into one. By unitality of $\circ_v$, we conclude that the $\labdata$-foam on the right-hand side of the equation is equal to $\mathrm{cup}_{\id_s}\sqcup\mathrm{cup}_{\id_t}$, as desired.
        \end{itemize}
        \item Finally, we prove 
        \begin{align*}
            \id_{\eta_s^*} = (\id_{\eta_s^*}\hcomp (\triangulator_s \boxtimes s^\#))\vcomp (\otimes_{\eta_s^*,\eta_s^*} \hcomp \id_{s\boxtimes \eta_{s^\#}\boxtimes s^\#} ) \vcomp (\id_{\eta_{s}^*}\hcomp (s\boxtimes \triangulator_{s^\#}^*))
        \end{align*}
        for all objects $s$. First, observe that the combinatorial $\labdata$-foam underlying the left-hand side of the equation is equal to $\mathrm{cup}_{\id_s}$. Similarly to the previous point, one shows that the $\labdata$-foam describing the right-hand side is equal to $\mathrm{cup}_{\id_s}$ by decomposing each term as disjoint unions of $\mathrm{cup}_{\id_s}$ and using unitality of $\circ_h$ and $\circ_v$.
    \end{enumerate}
    \textbf{Spatiality:}
    The fact that $\#$ extends to a weak 2-functor, contravariant in the horizontal composition and in the monoidal directions, is shown in \cite[Theorem 4.3]{barrett2018gray}. For this, the operation $\#$ on 1- and 2-morphisms is defined in \cite[Equation (17)]{barrett2018gray}. On 1-morphisms $W\colon s\to t$,  this coincides with the left-mate 
        \begin{align*}
            \#(W) = \lmate{W}\colon t^\#\to s^\#
        \end{align*}
    from Construction~\ref{constr:mates}. Note that the operation ${*\#*}(W)=\rmate{W}$ assigns the right-mate to $W$. As in \cite[Equation (32)]{barrett2018gray}, we define the 2-morphism components 
    \begin{align*}
        \Delta_W \in \Hom_{\Foambicathom{\labdata}{s^\#}{t^\#}} (\lmate{W},\rmate{W})
    \end{align*} 
    of the natural isomorphism $\Delta$ as
    \begin{align*}
            \Delta_W =&\ (\id_{\rmate{W}} \hcomp (\triangulator^*_{t^\#})^{-1})\vcomp
            (\id_{\rmate{W}}\hcomp \id_{t^\#\boxtimes \eta_t^*}\hcomp t^\#\boxtimes \varepsilon_W^*\boxtimes t^\#\hcomp \id_{\eta_{t^\#}\boxtimes t^\#}) \\
            &\ \vcomp (\id_{\rmate{W}}\hcomp \id_{(t^\#\boxtimes \eta_t^*)\hcompw (t^\#\boxtimes W\boxtimes t^\#)}\hcomp t^\#\boxtimes \triangulator_s\boxtimes t^\#\hcomp \id_{(t^\#\boxtimes W^*\boxtimes t^\#)\hcompw (\eta_{t^\#}\boxtimes t^\#)}) \\
            &\ \vcomp (\id_{\rmate{W}}\hcomp \otimes_{t^\#\boxtimes \eta^*_s, \eta^*_t\hcompw (W\boxtimes t^\#)}\hcomp \id_{(t^\#\boxtimes s\boxtimes \eta_{s^\#}\boxtimes t^\#)\hcompw (t^\#\boxtimes W^*\boxtimes t^\#) \hcompw (\eta_{t^\#}\boxtimes t^\#)}) \\
            &\ \vcomp (\id_{\rmate{W}}\hcomp \id_{t^\#\boxtimes \eta^*_s}\hcomp \otimes_{(t^\#\boxtimes W^*)\hcompw \eta_{t^\#}, \lmate{W}}) 
            \vcomp (\varepsilon_{\rmate{W}}\hcomp \id_{\lmate{W}}).
    \end{align*}
    It remains to verify that the 2-morphisms components of the natural transformations $\Delta$ and ${*\Delta *}$ between right- and left-mate are equal. This relies on the fact that the underlying combinatorial $\labdata$-foams used in the construction of these 2-morphisms are equal to the identity $\labdata$-foam, and that applying $*$ is compatible with the combinatorial $\labdata$-foams describing vertical and horizontal composition.
\end{proof}

\begin{remark}\label{rem:HashAsFunctor}
   The weak 2-functor $\#\colon \Foambicat{\labdata} \to \Foambicat{\labdata}^{\text{hop,$\boxtimes$op}}$ \cite[Theorem 4.3]{barrett2018gray} used in \Cref{thm:dualityStruc}.(3) acts weakly contravariantly in the $\hcompw$-direction of composition of webs. In particular, there is a 2-isomorphism $ \lmate{W_1}\hcompw \lmate{W_2}\to \lmate{(W_2\hcompw W_1)}$ for every composable pair of 1-morphisms $W_1\colon s\to t$ and $W_2\colon t\to u$. The 2-isomorphism is constructed from identity 2-morphisms, the triangulator $\triangulator_t$ and two tensorators. Furthermore, the 2-morphism $(\triangulator_{s^{\#}})^*$ provides a 2-isomorphism $\id_{s^{\#}}\to (\id_s)^{\#}$. Compare the proof of \cite[Theorem 4.3]{barrett2018gray}.
\end{remark}
\begin{remark}[Strictification]
\label{rem:strict}
    Recall our 2-dimensional graphical calculus for 1-morphisms, and the 3-dimensional graphical calculus for 2-morphisms from Figure~\ref{fig:cubecoordinatedconvention}. In terms of the graphical calculus, the operation $*$ from Construction~\ref{constr:starOn1morphs} is given by a rotation around the $0$-axis by $\pi$ together with a reversal of the orientation of each edge. 
    The $\#$-operation---which assigns the left-mate $\lmate{W}$ to a 1-morphism $W$---does not correspond to a rotation on the graphical calculus, however. It would instead be more natural to consider the rotation by $\pi$ around the $2$-axis. In the following, we comment on how such an operation could be defined on $\Foambicat{\labdata}$.

    One option is to use the strictification procedure developed in \cite[Section 5]{barrett2018gray} for spatial Gray categories with duals. For this, one first adjoins formally rotated copies of objects and 1-morphisms\footnote{In the Gray category language these are 1-morphisms and 2-morphisms, respectively.}. Then an operation $\underline{\#}$ is defined to swap the original and rotated copies, which can then be extended to a strict 2-functor that strictifies the operation $\#$. This yields an equivalent Gray category with strict duals in the sense of \cite[Definition 5.1]{barrett2018gray}, at the expense of introducing duplicate objects and 1-morphisms. Compare \cite[Theorems 5.2 and 5.3]{barrett2018gray}. 

    Another option is to define an operation $\hash$ on $\Foambicat{\labdata}$ combinatorially which resembles a rotation. For a basic $\labdata$-web $o_l\boxtimes W' \boxtimes o_r$ with $W'$ a generating $\labdata$-web, we define
    \begin{align*}
        \hash(o_l\boxtimes W' \boxtimes o_r) := o_r^{\#}\boxtimes (\lmate{W'})\boxtimes o_l^{\#}
    \end{align*}
    and for a 1-morphism $W=(W_1,\ldots,W_k)$, we set $\hash(W):=(\hash(W_k),\ldots,\hash(W_1))$. 
    For consistency, we then also require $\hash(\id_s):=\id_{s^\#}$.
    Note that, unlike $\#$, the operation $\hash$ respects composition of 1-morphisms. With this definition, however, the operation $\hash$ is only involutive up to the 2-isomorphisms which trivialize taking left-mates twice. 
    
    Lastly, as a compromise between the two previous attempts to define an operation $\underline{\#}$ that truly corresponds to the rotation by $\pi$ around the 2-axis, it is possible to use an enlargement of $\Foambicat{\labdata}$ that is much milder than in \cite[Section 5]{barrett2018gray}. We can repeat the construction of $\Foambicat{\labdata}$ but additionally adjoin for every generating merge and split $\labdata$-web a rotated counterpart. Specifically, for all labels $x,y\in \labdata_1$, these additional generators are modeled as: 
    \begin{equation*}
        \splitdownxy{x}{y}
        \colon (\widecheck{x+y})\to (\widecheck{x},\widecheck{y})\;\;, \quad
         \mergedownxy{x}{y} \colon(\widecheck{x},\widecheck{y})\to (\widecheck{x+y}). 
     \end{equation*}
     The rest of the construction proceeds analogously. One can then prove that the new downward-oriented merge and split vertices are isomorphic to the left- and right-mates of their upward-oriented counterparts, so that the resulting semistrict monoidal 2-category is equivalent to $\Foambicat{\labdata}$. In the new model, however, the operation $\underline{\#}$ can be defined to interchange upward and downward versions of all merge, split, cup and cap webs. Thus one obtains a strict 2-functor $\underline{\#}$ on the modified $\Foambicat{\labdata}$, which is an involution, and the composition of $*$ and $\underline{\#}$---which then corresponds to a rotation by $\pi$ around the 1-axis---is also an involution. As a result, one obtains a Gray category with strict duals. 
\end{remark}

\begin{remark}[Sphericality]
    In the language of Douglas--Reutter \cite[Definitions~2.2.3 and 2.2.4]{douglas2018fusion}, we have constructed a \emph{pivotal structure} on the \emph{monoidal planar pivotal 2-category} $\Foambicat{\labdata}$. In this setting, one can consider various kinds of 2-spherical traces of a given 1-morphism $W\colon s\to t$, see \cite[Section~2.2.3]{douglas2018fusion}. As a consequence of pivotality, the so-called left and right traces always agree by \cite[Proposition~2.2.9]{douglas2018fusion}. Informally speaking, in the associated graphical calculus, this allows to freely manipulate 2-morphisms represented by diagrams on an embedded 2-sphere through isotopy on that sphere.
    
    On top of this, one can also consider an additional 3-sphericality property \cite[Definition~2.3.2]{douglas2018fusion}. To state it, note that 2-spherical traces for 2-endomorphisms of an object can be formed in two ways, closing the 2-sphere around the front or the back in the graphical calculus. The 3-sphericality property \cite[Definition~2.3.2]{douglas2018fusion} for a pivotal 2-category requires that the front and back traces of any 2-endomorphism agree. This is the case for $\Foambicat{\labdata}$ as we now argue.
    
    Indeed, the \emph{front trace} and \emph{back trace} of a 2-endomorphism $\Sigma\in \Hom_{\Foambicathom{\labdata}{s}{s}}(\id_s,\id_s)$ for an object $s$ in $\Foambicat{\labdata}$ are, respectively, defined as 
    \begin{align*}
        \Tr_F(\Sigma) &= (\varepsilon_{\rcapweb_s})^* \vcomp (\id_{\rcapweb_s}\hcomp (\Sigma \boxtimes s^\#) \hcomp \id_{\lcupweb_s}) \vcomp \varepsilon_{\rcapweb_s} \in \Hom_{\Foambicathom{\labdata}{\emptyset}{\emptyset}}(\id_\emptyset, \id_{\emptyset})\\
        \Tr_B(\Sigma) &= (\varepsilon_{\lcapweb_s})^* \vcomp (\id_{\lcapweb_s}\hcomp(s^\#\boxtimes \Sigma)\hcomp \id_{\rcupweb_s}) \vcomp \varepsilon_{\lcapweb_s}\in \Hom_{\Foambicathom{\labdata}{\emptyset}{\emptyset}}(\id_\emptyset, \id_{\emptyset}).
    \end{align*}
     These are equal since the underlying combinatorial $\labdata$-foams are the same.  
\end{remark}

\bibliographystyle{myamsalpha}
\bibliography{pw}

@misc{Rou2,
	author = {R.~Rouquier},
	note = {\arxiv{0812.5023}},
	title = {2-{K}ac-{M}oody algebras},
	year = {2008}}

@article{BHMV,
	author = {Blanchet, C. and Habegger, N. and Masbaum, G. and Vogel, P.},
	coden = {TPLGAF},
	doi = {10.1016/0040-9383(94)00051-4},
	fjournal = {Topology. An International Journal of Mathematics},
	issn = {0040-9383},
	journal = {Topology},
	mrclass = {57N10 (57M25 81T40)},
	mrnumber = {1362791 (96i:57015)},
	mrreviewer = {Justin D. Roberts},
	number = {4},
	pages = {883--927},
	title = {Topological quantum field theories derived from the {K}auffman bracket},
	volume = {34},
	year = {1995}}

@article{Bla,
	author = {Blanchet, Christian},
	doi = {10.1142/S0218216510007863},
	fjournal = {Journal of Knot Theory and its Ramifications},
	issn = {0218-2165},
	journal = {J. Knot Theory Ramifications},
	mrclass = {57M27 (57M25)},
	mrnumber = {2647055 (2011f:57011)},
	mrreviewer = {Paola Cristofori},
	note = {\arxiv{1405.7246}},
	number = {2},
	pages = {291--312},
	title = {An oriented model for {K}hovanov homology},
	url = {http://dx.doi.org/10.1142/S0218216510007863},
	volume = {19},
	year = {2010}}

@article{BN2,
	author = {Bar-Natan, Dror},
	fjournal = {Geometry and Topology},
	issn = {1465-3060},
	journal = {Geom. Topol.},
	mrclass = {57M27 (57M25)},
	mrnumber = {MR2174270},
	note = {\arxiv{math.GT/0410495}},
	pages = {1443--1499},
	title = {Khovanov's homology for tangles and cobordisms},
	volume = {9},
	year = {2005}}

@article{Cap,
	author = {Caprau, C.L.},
	doi = {10.2140/agt.2008.8.729},
	fjournal = {Algebraic \& Geometric Topology},
	issn = {1472-2747},
	journal = {Algebr. Geom. Topol.},
	mrclass = {57M27 (57M25 57R56)},
	mrnumber = {2443094 (2009g:57019)},
	mrreviewer = {Masahico Saito},
	note = {\arxiv{math/0707.3051}},
	number = {2},
	pages = {729--756},
	title = {{$\mathfrak{sl}(2)$} tangle homology with a parameter and singular cobordisms},
	url = {http://arxiv.org/abs/0707.3051},
	volume = {8},
	year = {2008}}

@article{CKM,
	author = {Cautis, S. and Kamnitzer, J. and Morrison, S.},
	doi = {10.1007/s00208-013-0984-4},
	fjournal = {Mathematische Annalen},
	issn = {0025-5831},
	journal = {Math. Ann.},
	mrclass = {17B37 (17B10 17B20)},
	mrnumber = {3263166},
	mrreviewer = {Iwan Praton},
	note = {\arxiv{1210.6437}},
	number = {1-2},
	pages = {351--390},
	title = {Webs and quantum skew {H}owe duality},
	url = {http://arxiv.org/abs/1210.6437},
	volume = {360},
	year = {2014}}

@article{MR0766964,
	author = {Vaughan F. R. Jones},
	coden = {BAMOAD},
	doi = {10.1090/S0273-0979-1985-15304-2},
	fjournal = {American Mathematical Society. Bulletin. New Series},
	issn = {0273-0979},
	journal = {Bull. Amer. Math. Soc. (N.S.)},
	mrclass = {57M25 (46L10)},
	mrnumber = {766964 (86e:57006)},
	mrreviewer = {J. S. Birman},
	note = {\mathscinet{MR766964} \doi{10.1090/S0273-0979-1985-15304-2}},
	number = {1},
	pages = {103--111},
	title = {A polynomial invariant for knots via von {N}eumann algebras},
	url = {http://dx.doi.org/10.1090/S0273-0979-1985-15304-2},
	volume = {12},
	year = {1985}}

@article{Kho,
	author = {Khovanov, M.},
	coden = {DUMJAO},
	doi = {10.1215/S0012-7094-00-10131-7},
	fjournal = {Duke Mathematical Journal},
	issn = {0012-7094},
	journal = {Duke Math. J.},
	mrclass = {57M27 (57R56)},
	mrnumber = {1740682 (2002j:57025)},
	note = {\arxiv{math.QA/9908171}},
	number = {3},
	pages = {359--426},
	title = {A categorification of the {J}ones polynomial},
	url = {http://arxiv.org/abs/math/9908171},
	volume = {101},
	year = {2000}}

@article{Kho3,
	author = {Khovanov$\phantom{a}\!\!\!$, M.},
	doi = {10.2140/agt.2004.4.1045},
	fjournal = {Algebraic \& Geometric Topology},
	issn = {1472-2747},
	journal = {Algebr. Geom. Topol.},
	mrclass = {57M27 (18G60 57R56)},
	mrnumber = {2100691 (2005g:57032)},
	mrreviewer = {Justin Sawon},
	note = {\arxiv{math.QA/0304375}},
	pages = {1045--1081},
	title = {$\mathfrak{sl}(3)$ link homology},
	url = {http://arxiv.org/abs/math/0304375},
	volume = {4},
	year = {2004}}

@article{KR,
	author = {Khovanov, Mikhail and Rozansky, Lev},
	fjournal = {Fundamenta Mathematicae},
	issn = {0016-2736},
	journal = {Fund. Math.},
	mrclass = {57M25 (57M27)},
	mrnumber = {MR2391017},
	note = {\arxiv{math.QA/0401268}},
	number = {1},
	pages = {1--91},
	title = {Matrix factorizations and link homology},
	volume = {199},
	year = {2008}}

@article{MOY,
	author = {Murakami, H. and Ohtsuki, T. and Yamada, S.},
	coden = {ENMAAR},
	fjournal = {L'Enseignement Math\'ematique. Revue Internationale. IIe S\'erie},
	issn = {0013-8584},
	journal = {Enseign. Math. (2)},
	mrclass = {57M27 (05C10)},
	mrnumber = {1659228},
	mrreviewer = {Leonid Plachta},
	number = {3-4},
	pages = {325--360},
	title = {Homfly polynomial via an invariant of colored plane graphs},
	volume = {44},
	year = {1998}}

@article{MSV,
	author = {Mackaay, M. and Sto{\v{s}}i{\'c}, M. and Vaz, P.},
	doi = {10.2140/gt.2009.13.1075},
	fjournal = {Geometry \& Topology},
	issn = {1465-3060},
	journal = {Geom. Topol.},
	mrclass = {57M27 (18G60 57M25)},
	mrnumber = {2491657 (2010h:57019)},
	mrreviewer = {Alberto Cavicchioli},
	note = {\arxiv{0708.2228}},
	number = {2},
	pages = {1075--1128},
	title = {{$\mathfrak{sl}_{N}$}-link homology {$(N\geq 4)$} using foams and the {K}apustin--{L}i formula},
	url = {http://arxiv.org/abs/0708.2228},
	volume = {13},
	year = {2009}}

@article{QR,
	author = {Queffelec, H. and Rose, D.E.V.},
	doi = {10.1016/j.aim.2016.07.027},
	fjournal = {Advances in Mathematics},
	issn = {0001-8708},
	journal = {Adv. Math.},
	mrclass = {57M27 (17B10 17B37 18D05 18G60)},
	mrnumber = {3545951},
	mrreviewer = {Michael Abel},
	note = {\arxiv{1405.5920}},
	pages = {1251--1339},
	title = {The $\mathfrak{sl}_n$ foam $2$-category: a combinatorial formulation of {K}hovanov--{R}ozansky homology via categorical skew {H}owe duality},
	url = {http://arxiv.org/abs/1405.5920},
	volume = {302},
	year = {2016}}

@article{RoW,
	author = {Robert, Louis-Hadrien and Wagner, Emmanuel},
	doi = {10.4171/qt/139},
	fjournal = {Quantum Topology},
	issn = {1663-487X},
	journal = {Quantum Topol.},
	mrclass = {57R56 (17B10 17B37 57K18)},
	mrnumber = {4164001},
	note = {\arxiv{1702.04140}},
	number = {3},
	pages = {411--487},
	title = {A closed formula for the evaluation of foams},
	url = {https://doi.org/10.4171/qt/139},
	volume = {11},
	year = {2020}}

@article{TVW,
	adsurl = {http://adsabs.harvard.edu/abs/2015arXiv150405069T},
	archiveprefix = {arXiv},
	author = {{Tubbenhauer}, D. and {Vaz}, P. and {Wedrich}, P.},
	eprint = {1504.05069},
	journal = {ArXiv e-prints},
	month = apr,
	primaryclass = {math.QA},
	title = {{Super $q$-Howe duality and web categories}},
	year = {2015},
    pages= {3703--3749}}

@article {2019arXiv190712194M,
    AUTHOR = {Morrison, Scott and Walker, Kevin and Wedrich, Paul},
     TITLE = {Invariants of 4-manifolds from {K}hovanov-{R}ozansky link
              homology},
   JOURNAL = {Geom. Topol.},
  FJOURNAL = {Geometry \& Topology},
    VOLUME = {26},
      YEAR = {2022},
	  note = {\arxiv{1907.12194}},
    NUMBER = {8},
     PAGES = {3367--3420},
      ISSN = {1465-3060},
   MRCLASS = {57K18 (57R56)},
  MRNUMBER = {4562565},
       DOI = {10.2140/gt.2022.26.3367},
       URL = {https://doi.org/10.2140/gt.2022.26.3367},
}

@article{MWBlob,
	title={Blob homology},
	volume={16},
	ISSN={1465-3060},
	url={http://dx.doi.org/10.2140/gt.2012.16.1481},
	DOI={10.2140/gt.2012.16.1481},
	number={3},
	journal={Geometry \& Topology},
	publisher={Mathematical Sciences Publishers},
	author={Morrison, Scott and Walker, Kevin},
	year={2012},
	month=jul, 
	pages={1481–1607} }

@misc{HRWBorderedInvariantsKh,
	title={Bordered invariants from {K}hovanov homology}, 
	author={Matthew Hogancamp and David E. V. Rose and Paul Wedrich},
	year={2024},
	eprint={2404.06301},
	archivePrefix={arXiv},
	primaryClass={math.GT},
	note={\arxiv{2404.06301}} 
}

@article{MR1185809,
	author = {Lickorish, W. B. R.},
	doi = {10.1007/BF02566519},
	fjournal = {Commentarii Mathematici Helvetici},
	issn = {0010-2571},
	journal = {Comment. Math. Helv.},
	mrclass = {57N10 (57M25)},
	mrnumber = {1185809},
	mrreviewer = {Louis H. Kauffman},
	note = {\mathscinet{MR1185809} \doi{10.1007/BF02566519}},
	number = {4},
	pages = {571--591},
	title = {Calculations with the {T}emperley-{L}ieb algebra},
	url = {https://doi.org/10.1007/BF02566519},
	volume = {67},
	year = {1992}}

@article {barrett2018gray,
    AUTHOR = {Barrett, John W. and Meusburger, Catherine and Schaumann,
              Gregor},
     TITLE = {Gray categories with duals and their diagrams},
   JOURNAL = {Adv. Math.},
  FJOURNAL = {Advances in Mathematics},
    VOLUME = {450},
      YEAR = {2024},
     PAGES = {Paper No. 109740, 129},
      ISSN = {0001-8708},
       DOI = {10.1016/j.aim.2024.109740},
       URL = {https://doi.org/10.1016/j.aim.2024.109740},
}

@misc{douglas2018fusion,
	archiveprefix = {arXiv},
	author = {Christopher L. Douglas and David J. Reutter},
	eprint = {1812.11933},
	note = {\arxiv{1812.11933}},
	primaryclass = {math.QA},
	title = {Fusion 2-categories and a state-sum invariant for 4-manifolds},
	year = {2018}}

@article{MR1928174,
	author = {Khovanov, Mikhail},
	fjournal = {Algebraic \& Geometric Topology},
	issn = {1472-2747},
	journal = {Algebr. Geom. Topol.},
	mrclass = {57M27 (57R56)},
	mrnumber = {MR1928174 (2004d:57016)},
	mrreviewer = {Jacob Andrew Rasmussen},
	note = {\arxiv{math.GT/0103190}},
	pages = {665--741},
	title = {A functor-valued invariant of tangles},
	volume = {2},
	year = {2002}}

@misc{liu2024braided,
	archiveprefix = {arXiv},
	author = {Yu Leon Liu and Aaron Mazel-Gee and David Reutter and Catharina Stroppel and Paul Wedrich},
	note = {\arxiv{2401.02956}},
	primaryclass = {math.QA},
	title = {A braided monoidal $(\infty,2)$-category of {S}oergel bimodules},
	year = {2024}}

@article {DSPS,
    AUTHOR = {Douglas, Christopher L. and Schommer-Pries, Christopher and
              Snyder, Noah},
     TITLE = {Dualizable tensor categories},
   JOURNAL = {Mem. Amer. Math. Soc.},
  FJOURNAL = {Memoirs of the American Mathematical Society},
    VOLUME = {268},
      YEAR = {2020},
    NUMBER = {1308},
     PAGES = {vii+88},
      ISSN = {0065-9266},
      ISBN = {978-1-4704-4361-0; 978-1-4704-6347-2},
   MRCLASS = {57R56 (16D90 17B37 18M15 55U30 57K35 57R15)},
  MRNUMBER = {4254952},
MRREVIEWER = {J\v{i}r\'{\i} Rosick\'{y}},
       DOI = {10.1090/memo/1308},
       URL = {https://doi.org/10.1090/memo/1308},
}

@misc{khovanov2020universal,
    title={Universal construction of topological theories in two dimensions},
    author={Mikhail Khovanov},
    year={2020},
    eprint={2007.03361},
    archivePrefix={arXiv},
    primaryClass={math.QA}
}

@article {BaezNeuchlHDAI,
	AUTHOR = {Baez, John C. and Neuchl, Martin},
	TITLE = {Higher-dimensional algebra. {I}. {B}raided monoidal
	{$2$}-categories},
	JOURNAL = {Adv. Math.},
	FJOURNAL = {Advances in Mathematics},
	VOLUME = {121},
	YEAR = {1996},
	NUMBER = {2},
	PAGES = {196--244},
	ISSN = {0001-8708},
	MRCLASS = {18D05 (18D20 58D29 81T99)},
	MRNUMBER = {1402727},
	MRREVIEWER = {Pilar C. Carrasco},
	DOI = {10.1006/aima.1996.0052},
	URL = {https://doi.org/10.1006/aima.1996.0052},
}

@misc{leonsurfaceskein,
      title={A construction of surface skein {TQFT}s and their extension to 4-dimensional 2-handlebodies}, 
      author={Leon J. Goertz},
      year={2025},
      eprint={2511.19352},
      archivePrefix={arXiv},
      primaryClass={math.QA},
      note={\arxiv{2511.19352}}, 
}

@misc{stroppel2024braidingtypesoergelbimodules,
      title={Braiding on type A Soergel bimodules: semistrictness and naturality}, 
      author={Catharina Stroppel and Paul Wedrich},
      year={2024},
	  note = {\arxiv{2412.20587}},
      archivePrefix={arXiv},
      primaryClass={math.QA},
      url={https://arxiv.org/abs/2412.20587}, 
}

@misc{WedrichSurveyLinkHomTQFT,
      title={From Link Homology to Topological Quantum Field Theories}, 
      author={Paul Wedrich},
      year={2025},
      eprint={2509.08478},
      archivePrefix={arXiv},
      primaryClass={math.QA},
      note={\arxiv{2509.08478}}, 
}

@article {ClivioGrFA,
    AUTHOR = {Clivio, Jonathan},
     TITLE = {Graded {F}robenius {A}lgebras},
   JOURNAL = {Q. J. Math.},
  FJOURNAL = {The Quarterly Journal of Mathematics},
    VOLUME = {76},
      YEAR = {2025},
    NUMBER = {4},
     PAGES = {1105--1157},
      ISSN = {0033-5606,1464-3847},
   MRCLASS = {18 (55P50 57)},
  MRNUMBER = {5003282},
       DOI = {10.1093/qmath/haaf029},
       URL = {https://doi.org/10.1093/qmath/haaf029},
}

@article {GPStricat,
    AUTHOR = {Gordon, R. and Power, A. J. and Street, Ross},
     TITLE = {Coherence for tricategories},
   JOURNAL = {Mem. Amer. Math. Soc.},
  FJOURNAL = {Memoirs of the American Mathematical Society},
    VOLUME = {117},
      YEAR = {1995},
    NUMBER = {558},
     PAGES = {vi+81},
      ISSN = {0065-9266,1947-6221},
   MRCLASS = {18D05},
  MRNUMBER = {1261589},
MRREVIEWER = {Kimmo\ I.\ Rosenthal},
       DOI = {10.1090/memo/0558},
       URL = {https://doi.org/10.1090/memo/0558},
}

@misc{khovanov2025lecturessl3foamslink,
      title={Lectures on SL(3) foams and link homology}, 
      author={Mikhail Khovanov and Dmitry Solovyev},
      year={2025},
      eprint={2507.17119},
      archivePrefix={arXiv},
      primaryClass={math.QA},
      url={https://arxiv.org/abs/2507.17119}, 
}

@article {MR3880205,
    AUTHOR = {Kronheimer, P. B. and Mrowka, T. S.},
     TITLE = {Tait colorings, and an instanton homology for webs and foams},
   JOURNAL = {J. Eur. Math. Soc. (JEMS)},
  FJOURNAL = {Journal of the European Mathematical Society (JEMS)},
    VOLUME = {21},
      YEAR = {2019},
    NUMBER = {1},
     PAGES = {55--119},
      ISSN = {1435-9855,1435-9863},
   MRCLASS = {57R58 (05C15)},
  MRNUMBER = {3880205},
MRREVIEWER = {Nikolai\ N.\ Saveliev},
       DOI = {10.4171/JEMS/831},
       URL = {https://doi.org/10.4171/JEMS/831},
}

@book {MR3076451,
    AUTHOR = {Gurski, Nick},
     TITLE = {Coherence in three-dimensional category theory},
    SERIES = {Cambridge Tracts in Mathematics},
    VOLUME = {201},
 PUBLISHER = {Cambridge University Press, Cambridge},
      YEAR = {2013},
     PAGES = {viii+278},
      ISBN = {978-1-107-03489-1},
   MRCLASS = {18D05 (18-02)},
  MRNUMBER = {3076451},
MRREVIEWER = {Josep\ Elgueta},
       DOI = {10.1017/CBO9781139542333},
       URL = {https://doi.org/10.1017/CBO9781139542333},
}

@article {MR2039036,
    AUTHOR = {Kapustin, Anton and Li, Yi},
     TITLE = {Topological correlators in {L}andau-{G}inzburg models with
              boundaries},
   JOURNAL = {Adv. Theor. Math. Phys.},
  FJOURNAL = {Advances in Theoretical and Mathematical Physics},
    VOLUME = {7},
      YEAR = {2003},
    NUMBER = {4},
     PAGES = {727--749},
      ISSN = {1095-0761,1095-0753},
   MRCLASS = {81T45 (14D21 18E30 81T30)},
  MRNUMBER = {2039036},
MRREVIEWER = {Christopher\ P.\ Herzog},
       DOI = {10.4310/atmp.2003.v7.n4.a5},
       URL = {https://doi.org/10.4310/atmp.2003.v7.n4.a5},
}

@article {MR2322554,
    AUTHOR = {Khovanov, Mikhail and Rozansky, Lev},
     TITLE = {Topological {L}andau-{G}inzburg models on the world-sheet
              foam},
   JOURNAL = {Adv. Theor. Math. Phys.},
  FJOURNAL = {Advances in Theoretical and Mathematical Physics},
    VOLUME = {11},
      YEAR = {2007},
    NUMBER = {2},
     PAGES = {233--259},
      ISSN = {1095-0761,1095-0753},
   MRCLASS = {81T45 (81T40)},
  MRNUMBER = {2322554},
MRREVIEWER = {Johannes\ Walcher},
       DOI = {10.4310/atmp.2007.v11.n2.a2},
       URL = {https://doi.org/10.4310/atmp.2007.v11.n2.a2},
}

@misc{dyckerhoff2025perverseschoberscoxetertype,
      title={Perverse schobers of Coxeter type $\mathbb{A}$}, 
      author={Tobias Dyckerhoff and Paul Wedrich},
      year={2025},
      eprint={2504.08496},
      archivePrefix={arXiv},
      primaryClass={math.QA},
      url={https://arxiv.org/abs/2504.08496}, 
}

@misc{GoertzWedrich,
      title={A note on {TQFT}s for orientable 2-dimensional cobordisms}, 
      author={Leon J. Goertz and Paul Wedrich},
      year={2025},
      eprint={2511.19373},
      archivePrefix={arXiv},
      primaryClass={math.QA},
      note={\arxiv{2511.19373}}, 
}
\end{document}